\g@addto@macro{\UrlBreaks}{\UrlOrds} 
\newsavebox\MBox
\newcommand{\from}{\colon}
\newcommand{\allora}{\Rightarrow}
\newcommand{\sse}{\Leftrightarrow}
\newcommand{\implica}{\rightarrow}
\newcommand{\coimplica}{\leftrightarrow}
\newcommand{\ssq}{\subseteq}
\renewcommand{\phi}{\varphi}
\renewcommand{\epsilon}{\varepsilon}
\renewcommand{\models}{\vDash}
\newcommand{\mb}{\mathbb}
\newcommand{\mf}{\mathfrak}
\newcommand{\tf}{\textsf}
\newcommand{\proves}{\vdash}
\newcommand{\pfin}{{\mathscr P}_{\mathrm{fin}}}
\newcommand{\opsc}[1]{\operatorname{\textsc{#1}}}
\newcommand{\restr}{\upharpoonright}
\newcommand*\bigcircled[1]{\tikz[baseline=(char.base)]{
    \node[shape=circle,draw,inner sep=1pt] (char) {#1};}}
\newcommand{\monster}{\mf U}
\newcommand{\smallsubset}{\mathrel{\subset^+}}
\newcommand{\smallprec}{\mathrel{\prec^+}}
\newcommand{\satext}{\mathrel{^+\!\!\succ}}
\newcommand{\invtypes}{S^{\mathrm{inv}}}
\newcommand{\invext}{\mid}
\newcommand{\then}{\Longrightarrow}
\newcommand{\eqqcolon}{\mathrel{\rotatebox[origin=c]{180}{$\coloneqq$}}}
\newcommand{\ceq}{\coloneqq}
\newcommand{\eq}{\mathrm{eq}}
\def\forkindep{\mathrel{\raise0.2ex\hbox{\ooalign{\hidewidth$\vert$\hidewidth\cr\raise-0.9ex\hbox{$\smile$}}}}}
\newcommand{\find}[1]{\mathchoice{\mathrel{\underset{#1}\forkindep}}{\mathrel{\forkindep_{#1}}}{}{}}
\def\forkdep{\mathrel{\raise0.2ex\hbox{\ooalign{\hidewidth$\not\vert$\hidewidth\cr\raise-0.9ex\hbox{$\smile$}}}}}
\newcommand{\invbar}{\operatorname{\overline{Inv}}(\monster)}
\newcommand{\invtilde}{\operatorname{\widetilde{Inv}}(\monster)}
\newcommand{\gsinvtilde}{\operatorname{\widetilde{Inv}^\mathrm{gs}}\!\!(\monster)}
\newcommand{\equidom}{\mathrel{\equiv_{\mathrm{D}}}}
\newcommand{\nequidom}{\mathrel{{\centernot\equiv}_{\mathrm{D}}}}
\newcommand{\domeq}{\mathrel{\sim_{\mathrm{D}}}}
\newcommand{\doms}{\mathrel{\ge_{\mathrm{D}}}}
\newcommand{\ndoms}{\mathrel{{\centernot\ge}_{\mathrm{D}}}}
\newcommand{\domd}{\mathrel{\le_{\mathrm{D}}}}
\newcommand{\cldomeq}{\mathrel{\bowtie}} 
\newcommand{\cldoms}{\mathrel{\triangleright}}\newcommand{\ka}{\kappa}
\newcommand{\wort}{\mathrel{\perp^\mathrm{w}}}
\newcommand{\nwort}{\mathrel{\not\perp^\mathrm{w}}}
\newcommand{\inverse}{^{-1}}
\newcommand{\bla}[4]{{#1}_{#2}#3\ldots#3{#1}_{#4}}
\newcommand{\pow}[2]{#1^{(#2)}}
\newcommand{\actson}{\mathrel{\curvearrowright}}
\DeclareMathOperator{\im}{Im}
\DeclareMathOperator{\Th}{Th}
\DeclareMathOperator{\tp}{tp}
\DeclareMathOperator{\dcl}{dcl}
\DeclareMathOperator{\acl}{acl}
\DeclareMathOperator{\aut}{Aut}
\DeclarePairedDelimiter{\set}{\{}{\}}
\DeclarePairedDelimiter{\abs}{\lvert}{\rvert}
\DeclarePairedDelimiter{\seq}{\langle}{\rangle}
\DeclarePairedDelimiter{\class}{\llbracket}{\rrbracket}
\theoremstyle{definition}
\newtheorem{defin}{Definition}[section]
\newtheorem{thm}[defin]{Theorem}
\newtheorem*{unnmdthm}{Theorem}
\newtheorem{pr}[defin]{Proposition}
\newtheorem{co}[defin]{Corollary}
\newtheorem{lemma}[defin]{Lemma}
\newtheorem*{notation}{Notation}
\newtheorem{eg}[defin]{Example}
\newtheorem{rem}[defin]{Remark}
\newtheorem{fact}[defin]{Fact}
\newtheorem{question}[defin]{Question}
\newtheorem{conjecture}[defin]{Conjecture}
\newcommand{\subjclass}[2][2010]{%
  \let\@oldtitle\@title%
  \gdef\@title{\@oldtitle\footnotetext{#1 \emph{Mathematics subject classification.} #2}}%
}
\newcommand{\keywords}[1]{%
  \let\@@oldtitle\@title%
  \gdef\@title{\@@oldtitle\footnotetext{\emph{Key words and phrases.} #1.}}%
}
\author{Rosario Mennuni%
  \thanks{email: \url{R.Mennuni@leeds.ac.uk} \textsc{orcid}: \url{https://orcid.org/0000-0003-2282-680X}}}
\affil{University of Leeds}
\title{Product of Invariant Types\\ Modulo Domination-Equivalence}
\keywords{Domination, domination-equivalence, equidominance, product of invariant types}
\subjclass{03C45}
\begin{document}
\maketitle
\begin{abstract}
    We investigate the interaction between the  product of invariant types and domination-equivalence. We present a theory where the latter is not a congruence with respect to the former, provide sufficient conditions for it to be, and study the resulting quotient when  it is.
\end{abstract}

To a sufficiently saturated model of a first-order theory one can associate a semigroup, that of \emph{global invariant types}  with the \emph{tensor product} $\otimes$.  This can be endowed with two equivalence relations, called \emph{domination-equivalence} and \emph{equidominance}. This paper  studies the resulting quotients, starting from sufficient conditions for $\otimes$ to be well-defined on them. We show, correcting a remark in~\cite{hhm}, that this need not be always the case.

Let $S(\monster)$ be the space of types in any finite number of variables over a model $\monster$ of a first-order theory that is $\kappa$-saturated and $\kappa$-strongly homogeneous for some large $\kappa$. For any set $A\subseteq \monster$, one has a natural action on $S(\monster)$ by the group $\aut(\monster/A)$ of automorphisms of $\monster$ that fix $A$ pointwise.  The space $\invtypes(\monster)$ of \emph{global invariant types} consists of those elements of $S(\monster)$ which, for some \emph{small} $A$, are fixed points of the  action $\aut(\monster/A)\actson S(\monster)$. Each of these types has a canonical extension to bigger models $\monster_1\succ \monster$, namely the unique one which is a fixed point of the action  $\aut(\monster_1/A)\actson S(\monster_1)$, and this allows us to define an associative product $\otimes$ on $\invtypes(\monster)$. This is the semigroup which we are going to quotient.

We say that a global type $p$ \emph{dominates} a global type $q$ when $p$ together with a small set of formulas entails $q$. This is a preorder, and we call the induced equivalence relation \emph{domination-equivalence}. We also look at \emph{equidominance}, the refinement of domination-equivalence obtained by requiring that domination of $p$ by $q$ and of $q$ by $p$ can be witnessed by the same set of formulas. These notions have their roots in the work of Lascar, who in~\cite{firstdefinition} generalised the Rudin-Keisler order on ultrafilters to types of a theory; his preorder was subsequently generalised to domination between stationary types in a stable theory. 

Equidominance reached its current form in~\cite{hhm}, where it was used to prove a result of Ax-Kochen-Ershov flavour; namely, that in the case of algebraically closed valued fields one can compute the quotient of the semigroup of global invariant types by equidominance, and it turns out to be commutative and to decompose in terms of value group and residue field. It was also claimed, without proof, that such a semigroup is also well-defined and commutative in any complete first-order theory. The starting point of this research was to try to fill this gap by proving these claims. After trying in vain to prove well-definedness of the quotient semigroup, the author started to investigate sufficient conditions for it to hold. Eventually, a counterexample arose:
\begin{unnmdthm}
  There is a ternary, $\omega$-categorical, supersimple theory of SU-rank $2$ with degenerate algebraic closure in which neither  domination-equivalence nor equidominance are congruences with respect to $\otimes$.
\end{unnmdthm}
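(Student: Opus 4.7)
The plan is to construct $T$ as the theory of a Fra\"iss\'e limit of a carefully chosen amalgamation class $\mc K$ in a purely relational language with a single ternary predicate $R$. To secure $\omega$-categoricity and degenerate algebraic closure I would let $\mc K$ be defined by a finite list of ``forbidden configurations'' (so that $\mc K$ is uniformly locally finite) and verify strong amalgamation directly from the shape of these forbidden patterns; the Fra\"iss\'e limit is then $\omega$-categorical and satisfies $\acl(A)=A$ for every $A\subseteq\monster$. To pin down supersimplicity and SU-rank exactly $2$, I would identify non-forking independence with the amalgamation-freeness provided by $\mc K$, apply the Kim--Pillay characterisation, and exhibit an explicit two-step forking chain witnessed by $R$; a third step would be ruled out by invoking one of the forbidden configurations, bounding the rank above.

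With $T$ in hand, the counterexample will consist of two pairs $(p,p')$ and $(q,q')$ of global invariant $1$-types with $p$ equidominant to $p'$ and $q$ equidominant to $q'$, such that $p\otimes q\not\domeq p'\otimes q'$. Since equidominance refines domination-equivalence, the failure of the latter on the product automatically refutes the former, so both halves of the theorem are handled by the same example. The underlying idea is to exploit the asymmetry in how $\otimes$ assembles the two variables: a global invariant $1$-type records the $R$-pattern of a single realisation with parameters, and two such types may have identical ``projections'' on unary data (making them equidominant via a small set of formulas that simultaneously witness both entailments) while prescribing incompatible joint behaviour once a second independent realisation is placed alongside. The product $p\otimes q$ then pins down a ternary configuration between $x$, $y$ and a parameter $c$, and by tuning the invariant types one arranges that the configuration forced by $p\otimes q$ is literally forbidden in $\mc K$ along any realisation of $p'\otimes q'$, so no small set of formulas can mediate domination either way.

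The main obstacle is the combinatorial design of $\mc K$: it must be rich enough to host four distinct $1$-types organised into the two equidominant pairs whose products separate on a single $R$-formula, yet sparse enough to keep $\acl$ degenerate and SU-rank equal to $2$. Computing the non-forking independence relation explicitly via the amalgamation and checking that the rank is exactly $2$, rather than $1$ or higher, is where the bulk of the technical work lies. Once that is done, verifying equidominance of the factors amounts to writing down the explicit small set of $R$-formulas that realises each entailment simultaneously, and refuting domination-equivalence of the products reduces to a finite case analysis on the ternary relation applied to realisations of $p\otimes q$ and $p'\otimes q'$ in a bigger monster model.
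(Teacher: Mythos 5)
The high-level recipe you sketch --- Fra\"iss\'e limit of a finitely constrained class, $\omega$-categoricity from uniform local finiteness, degenerate $\acl$ from strong amalgamation, Kim--Pillay for simplicity --- is indeed the paper's strategy, but your two central design decisions diverge from the actual construction in ways that look fatal.

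First, the paper's language is not a single ternary predicate but $\{E^{(2)}, R_2^{(2)}, R_3^{(3)}\}$, where $E$ is an equivalence relation with infinitely many infinite classes, $R_2$ is an $E$-equivariant graph relation that descends to the Random Graph on $\monster/E$, and $R_3$ is the genuinely ternary part. The binary relation $E$ is what makes SU-rank exactly $2$: forking is $a \find_C b \iff (a\cap b\ssq C)\land (\pi a\cap\pi b\ssq \pi C)$ with $\pi$ the projection to $E$-classes, and the two disjointness conditions give precisely two forking steps for a $1$-type. With a single ternary relation and forbidden configurations yielding strong amalgamation, the natural independence relation has only one level --- as in the generic $3$-hypergraph --- and the generic $1$-type has SU-rank $1$. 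Your sketch of a ``two-step forking chain witnessed by $R$'' does not explain how a purely ternary forbidden pattern would stratify the rank, and I do not see how to do it without a binary relation. (The theory is still \emph{ternary} in the required sense because every relation symbol has arity at most $3$ and quantifiers are eliminated; that does not require a single relation symbol.)

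Second, and more seriously for the counterexample itself, the paper does not use four $1$-types. It uses one invariant $1$-type $p$ (with $R_2$-edges to everything and no $R_3$-hyperedges), a $1$-type $q_0$ (with no $R_2$-edges), and an invariant \emph{$2$-type} $q_1(z_0, z_1)$ consisting of $q_0$ in the variable $z_0$ together with $E(z_0, z_1)\land z_0\ne z_1$. The extra coordinate $z_1$ is the engine: it makes $q_0\equidom q_1$ essentially free (send $y\mapsto z_0$, and $z_1$ contributes only its $E$-class), yet after tensoring with $p$, the $R_3$-hyperedges between $w$, $z_1$ and arbitrary $a\in\monster$ are left completely undetermined by $p(x)\otimes q_0(y)$, so genericity of $R_3$ defeats every small $r$. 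Restricting, as you propose, to $1$-types on both sides removes exactly this hidden degree of freedom, and it is unclear that the product can then separate at all. Incidentally, your setup with two pairs $(p,p')$ and $(q,q')$ is also more than needed: since $\otimes$ always respects $\doms$ and $\equidom$ on the left (Lemma~\ref{lemma:atleastontheleft}), any counterexample reduces to a single $p$ and a pair $q_0\equidom q_1$ with $p\otimes q_0\ndoms p\otimes q_1$, which is what the paper exhibits.
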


The paper is organised as follows. In Section~\ref{section:defwd} we define the main object of study, namely the quotient  $\invtilde$ of the semigroup of global invariant types modulo domination-equivalence, provide some sufficient conditions for it to be well-defined and investigate its most basic properties. In Section~\ref{section:cntrex}   we prove the theorem above, which shows that $\invtilde$ need not be well-defined in general; we also  show (Corollary~\ref{co:rgncomm}) that in the theory of the Random Graph $\invtilde$ is not commutative. In Section~\ref{section:prpr} we prove that definability, finite satisfiability, generic stability (Theorem~\ref{thm:propertiespreserved}) and weak orthogonality  to a type (Proposition~\ref{pr:wortpreserved}) are preserved downwards by domination. This is useful in explicit computations of $\invtilde$ and yields as a by-product (Corollary~\ref{co:gsinvtilde}) that another, smaller object based on generically stable types is instead well-defined in full generality. In    Section~\ref{section:depmon} we explore whether and how much $\invtilde$ depends on $\monster$; we show (Corollary~\ref{co:ipdep}) that its independence from the choice of $\monster$ implies \textsf{NIP}. Section~\ref{section:stabletheories}  gathers some previously known results from classical stability theory and explores their consequences in the context of this paper (e.g.~Theorem~\ref{thm:bigoplusN}). Sections from~\ref{section:cntrex} to~\ref{section:depmon} depend on Section~\ref{section:defwd} but can be read independently of each other; Section~\ref{section:stabletheories} contains references to all previous sections but can in principle be read after Section~\ref{section:defwd}. 

 \paragraph{Acknowledgements}
 First of all, I would like to thank my supervisors, Dugald Macpherson and Vincenzo Mantova, whose guidance, suggestions and feedback have been more than invaluable. I am also indebted to Jan Dobrowolski for the useful discussions on binary theories,  Ehud Hrushovski for his permission to include Proposition~\ref{pr:dlopncomm}, and Anand Pillay for pointing out Remark~\ref{rem:tteqstab}.

 Part of this research was carried out while I was participating in the thematic trimester \emph{Model Theory, Combinatorics and Valued Fields} held in Paris from January to April 2018. I would like to thank the organising committee for making the trimester possible, and the Institut Henri Poincar\'e, the Centre National de la Recherche Scientifique and the School of Mathematics of the University of Leeds for supporting financially my participation in it.
 
 This research is part of the author's PhD project, supported by a Leeds Anniversary Research Scholarship.

\section{Definition and Well-Definedness}\label{section:defwd}
\subsection{Set-up}
Notations and conventions are standard, and we now recall some of them.

We work in an arbitrary complete theory $T$, in a first-order language $L$, with infinite models. As customary,  all mentioned inclusions between models of $T$ are assumed to be elementary maps, and we call models of $T$ which are $\kappa$-saturated and $\kappa$-strongly homogeneous for a large enough $\kappa$   ``monster'' models; we denote them by $\monster$, $\monster_0$, etc.  Saying that $A\subseteq \monster$ is \emph{small} means that $\monster$ is $\abs A^+$-saturated and $\abs A^+$-strongly homogeneous, and is sometimes denoted by $A\smallsubset \monster$, or $A\smallprec \monster$ if additionally $A\prec \monster$. \emph{Large} means ``not small''. The letters $A$ and $M$ usually represent, respectively, a small subset and a small elementary substructure  of $\monster$.

Parameters and variables are tacitly allowed to be finite tuples unless otherwise specified, and we abuse the notation by writing e.g.\ $a\in \monster$ instead of $a\in \monster^{\abs a}$. Coordinates of a tuple are indicated with subscripts, starting with $0$, so for instance $a=(a_0,\ldots, a_{\abs a-1})$. To avoid confusion, indices for a sequence of tuples are written as superscripts, as in $\seq{a^i\mid i\in I}$. The letters $x,y,z,w,t$ denote tuples of variables, the letters $a,b,c,d,e,m$ denote tuples of elements of a model.

A \emph{global type} is a complete type over $\monster$.  ``Type over $B$'' means ``complete type over $B$''. We say ``partial type'' otherwise. We sometimes write e.g.\ $p_x$ in place of $p(x)$ and denote with $S_x(B)$ the space of types in variables $x$.

When mentioning realisations of global types, or supersets of a monster, we implicitly think of them as living inside a bigger monster model, which usually goes unnamed. Similarly, implications are to be understood modulo the elementary diagram $\opsc{ed}(\monster_*)$ of an ambient monster model $\monster_*$, e.g.\ if $c\in \monster_*\succ \monster$ and $p\in S(\monster c)$ then $(p\restr \monster)\proves p$ is a shorthand for $(p\restr \monster)\cup \opsc{ed}(\monster_*)\proves p$. We sometimes take deductive closures implicitly, as in ``$\set{x=a}\in S_x(\monster)$''.

  If we define a property a theory may have, and then we say that a structure has it, we mean that its complete theory does. When we say ``$L$-formula'', we mean without parameters;  for emphasis, we sometimes write $L(\emptyset)$, with the same meaning as $L$. In formulas, (tuples of) variables will be separated by commas or semicolons. The distinction is purely cosmetic, to help readability, and usually it means we regard the variables on the left of the semicolon as ``object variables'' and the ones on the right as ``parameter variables'', e.g.\ we may write $\phi(x,y;w)\in L$, $\phi(x,y;d)\in p(x)\otimes q(y)$.

\subsubsection{Products of Invariant Types}
We briefly recall some standard results on invariant types and fix some notation. For proofs, see e.g.~\cite[Section~2.2]{simon} or~\cite[Chapter~12]{poizat}.

\begin{defin}
Let $A\subseteq B$.  A  type $p\in S_x(B)$ is \emph{$A$-invariant} iff for all $\phi(x;y)\in L$ and $a\equiv_A b$ in $B^{\abs y}$ we have $p(x)\proves \phi(x;a)\coimplica \phi(x;b)$. A global type $p\in S_x(\monster)$ is \emph{invariant} iff it is $A$-invariant for some small $A$.
\end{defin}
Equivalently, a global $p\in S_x(\monster)$ is $A$-invariant iff it is a fixed point of the usual action of $\aut(\monster/A)$ on $S_x(\monster)$ defined by $f(p)\coloneqq\set{\phi(x; f(a))\mid \phi(x; y)\in L(\emptyset), \phi(x;a)\in p}$. Note that if $p$ is $A$-invariant and $A_1\supseteq A$, then $p$ is automatically $A_1$-invariant. This will be used tacitly throughout.
\begin{notation}
We denote by $\invtypes_x(\monster, A)$ the space of global $A$-invariant types in variables $x$, with $A$ small, and with $\invtypes_x(\monster)$ the union of the $\invtypes_x(\monster, A)$ as $A$ varies among small subsets of $\monster$. We denote by $S_{<\omega}(B)$, or just by $S(B)$, the union  for $n<\omega$ of the spaces of complete types in $n$ variables over $B$. Similarly for, say,  $\invtypes_{<\omega}(\monster)$.

If $p\in S_x(\monster)$ is $A$-invariant and $\phi(x;y)\in L$, write 
\[
(d_p\phi(x;y))(y)\coloneqq\set{\tp_y(b/A)\mid \phi(x;b)\in p, b\in \monster}
\]

If $p(x),q(y)\in S(B)$ and $A\subseteq B$, we write
\[
  S_{pq}(A)\coloneqq\set{r\in S_{xy}(A)\mid r\supseteq (p\restr A)\cup (q\restr A)}
\]
In situations like the one above, we implicitly assume, for convenience and with no loss of generality, that $x$ and $y$ share no common variable.
\end{notation}

\begin{pr}[{\!\!\cite[p.~19]{simon}}]
Let $A$ be small.  Given an $A$-invariant type $p\in S_x(\monster)$ and a set of parameters $B\supseteq \monster$ there is a unique extension $p\invext B$ of $p$ to an $A$-invariant type over $B$, and it is given by requiring, for all $\phi(x;y)\in L$ and $b\in B$, 
\[
\phi(x;b)\in p\invext B\iff \tp(b/A)\in (d_p\phi(x;y))(y)
\]
Moreover, if $p\in \invtypes_x(\monster, A)$,  $\phi(x,y;w)\in L(\emptyset)$,  $d\in \monster$ and $q\in S_y(\monster)$, then the following are equivalent:
\begin{enumerate}
\item For some (equivalently, all) $b\models q$ we have that $\phi(x,b;d)\in p\invext \monster b$.
\item For some  (equivalently, all) $b\in\monster$ such that $b\models q\restr Ad$ we have that $\phi(x,b;d)\in p$.
\item $q\in \pi^{-1}\bigl((d_p\phi(x,y;d))(y)\bigr)$, for $\pi\from S_y(\monster)\to S_y(Ad)$  the restriction map.
\end{enumerate}
\end{pr}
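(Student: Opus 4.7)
The plan is to construct the extension $p\invext B$ explicitly by the stated formula, verify its uniqueness by a short argument using saturation and strong homogeneity of $\monster$, and then derive the three-way equivalence in the ``moreover'' clause by direct unpacking.

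For existence, I would set
\[
p'(x)\ceq\set{\phi(x;b)\mid \phi(x;y)\in L,\ b\in B,\ \tp(b/A)\in (d_p\phi(x;y))(y)}.
\]
Completeness of $p'$ over $B$ and the inclusion $p\subseteq p'$ follow from completeness and $A$-invariance of $p$, since for any $\phi$ and any $b\in B$ exactly one of $(d_p\phi)(y)$ and $(d_p\neg\phi)(y)$ contains $\tp(b/A)$. For consistency, given finitely many $\phi_i(x;b_i)\in p'$, I would realise the joint type $\tp(b_0\ldots b_{n-1}/A)$ inside $\monster$ by a tuple $(b'_0,\ldots,b'_{n-1})$, using that $A$ is small; then $A$-invariance of $p$ gives $\phi_i(x;b'_i)\in p$ for all $i$, and $\bigwedge_i\phi_i(x;b_i)$ is consistent by homogeneity of the ambient monster containing $B$. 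Finally, $p'$ is $A$-invariant by inspection of the defining condition. Uniqueness is then immediate: for any $A$-invariant extension $p''$ of $p$ over $B$ and any $b\in B$, picking $b'\in\monster$ with $b'\equiv_A b$ (by saturation of $\monster$), one has $\phi(x;b)\in p''\iff\phi(x;b')\in p''\iff\phi(x;b')\in p$, using $A$-invariance of $p''$ and completeness of $p$, which matches the defining condition of $p'$.

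The moreover clause follows as a direct corollary. Applying the existence characterisation to $B=\monster b$ and regarding $\phi(x,y;w)$ as a parameter-free formula $\psi(x;y,w)$ with object variable $x$, statement~(1) unfolds to $\tp(bd/A)\in(d_p\psi(x;y,w))(y,w)$, while (2) and (3) are restatements of each other once one observes that by definition $(d_p\phi(x,y;d))(y)=\set{\tp(b''/Ad)\mid b''\in\monster,\ \phi(x,b'';d)\in p}$. The remaining equivalence (1)$\iff$(2) then amounts to: $\tp(bd/A)\in(d_p\psi)(y,w)$ iff $\tp(b/Ad)\in(d_p\phi(x,y;d))(y)$. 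The direction $\Leftarrow$ is immediate by taking the witness with second component equal to $d$; for $\Rightarrow$, I would take any witness $(b',d')\in\monster$ and use strong homogeneity of $\monster$ to find $g\in\aut(\monster/A)$ with $g(d')=d$, so that $g(b')\in\monster$ witnesses the right-hand side via $A$-invariance of $p$. The ``some $\iff$ all'' clauses in (1) and (2) are automatic, since the condition depends only on $q\restr Ad$.

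I expect no substantial obstacle. The one point calling for a moment of thought is the consistency argument, where one must realise the joint type (not the individual types) so that a single element satisfies the whole conjunction; everything else is routine unpacking of definitions combined with standard uses of saturation and strong homogeneity.
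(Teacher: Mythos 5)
The paper states this proposition with a citation to Simon's book~\cite[p.~19]{simon} and does not supply a proof, so there is nothing internal to compare against; your argument is the standard one and it is correct. The existence construction of $p\invext B$ via $d_p\phi$, the joint-realisation trick for consistency, the uniqueness argument via an $A$-conjugate in $\monster$, and the reduction of the ``moreover'' equivalences to statements about $\tp(bd/A)$ with the strong-homogeneity step $g(d')=d$ to get from a witness $(b',d')$ to one with second coordinate $d$ are all sound. The only point worth flagging is notational: in clause~(3) you correctly read $(d_p\phi(x,y;d))(y)$ as the set of types over $Ad$ obtained by treating $p$ as $Ad$-invariant and $\phi(x,y;d)$ as an $L(Ad)$-formula, which extends the paper's literal definition of $d_p$ (stated only for $L(\emptyset)$-formulas over base $A$) but is the only reading under which~(3) parses, and it is what Simon intends.
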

Also note that if $A_1\supseteq A$ is another small set then $p\invext B$ is also the unique $A_1$-invariant extension of $p$. All this ensures that the following operation is well-defined, i.e.\ does not depend on $b\models q$ and on whether we regard $p$ as $A$-invariant or $A_1$-invariant.
\begin{defin}
Let $p\in \invtypes_x(\monster,A)$  and $q\in S_y(\monster)$. Define $p(x)\otimes q(y)\in S_{xy}(\monster)$ as follows. Fix $b\models q$. For each $\phi(x,y)\in L(\monster)$, define
\[
\phi(x,y)\in p(x)\otimes q(y)\iff \phi(x,b)\in p\invext \monster b
\]
We also define inductively $\pow p1\coloneqq p(x^0)$ and $\pow{p}{n+1}\coloneqq p(x^n)\otimes\pow pn(x^{n-1},\ldots, x^0)$.
\end{defin}

\begin{fact}[\!\!{\cite[Fact~2.19 and Fact~2.20]{simon}}]
The product of two $A$-invariant global types is still $A$-invariant, and $\otimes$ is associative on $\invtypes(\monster)$.
\end{fact}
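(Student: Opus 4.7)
The plan is to establish the two assertions in sequence.

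For the first half, I would use item (2) of the preceding proposition to reformulate membership: for $\phi(x,y;w) \in L$ and $c \in \monster$, $\phi(x,y;c) \in p \otimes q$ iff there exists $b \in \monster$ with $b \models q \restr Ac$ and $\phi(x,b;c) \in p$. Given $c \equiv_A c'$, pick $f \in \aut(\monster/A)$ sending $c$ to $c'$. Using the $A$-invariance of $q$ and of $p$, the pair $(f(b), c')$ satisfies $f(b) \models q \restr Ac'$ and $\phi(x,f(b);c') \in p$, so $\phi(x,y;c') \in p \otimes q$; the converse is symmetric.

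For associativity, the key technical step is the extension/restriction identity
\[
(p \otimes q) \invext \monster_1 = (p \invext \monster_1) \otimes (q \invext \monster_1)
\]
for any monster $\monster_1 \succ \monster$. Both sides are $A$-invariant: the left by the invariance result just proved and uniqueness of $A$-invariant extensions, the right by that same invariance result applied to the $A$-invariant types $p \invext \monster_1$ and $q \invext \monster_1$. A direct unwinding using any realization $b \models q \invext \monster_1$ shows both types restrict to $p \otimes q$ on $\monster$, so uniqueness of the $A$-invariant extension forces equality. This identity is the non-routine part of the proof, as it is the only place where the uniqueness clause of the preceding proposition must be combined with the invariance half; everything else reduces to unwinding the definition of $\otimes$.

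Given this, associativity follows by direct computation. Fix $\phi(x,y,z;d) \in L(\monster)$ and realize $c \models r$ and $b \models q \invext \monster c$ in a sufficiently large monster. Unwinding from the outside, $\phi \in (p \otimes q) \otimes r$ iff $\phi(x,y,c;d) \in (p \otimes q) \invext \monster c$; invoking the identity and the definition of $\otimes$, this becomes $\phi(x,b,c;d) \in p \invext \monster bc$. On the other hand, $\phi \in p \otimes (q \otimes r)$ asks the same of any realization of $q \otimes r$, and a direct inspection of the definition of $q \otimes r$ shows that such realizations are precisely the pairs $(b,c)$ with $c \models r$ and $b \models q \invext \monster c$, so the two conditions coincide.
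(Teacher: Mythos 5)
This is stated in the paper as a cited Fact, with the proof delegated to Simon's book rather than given inline, so there is no in-paper argument to compare against. Your proof is correct and is essentially the standard one: the first half is the routine automorphism chase via clause (2) of the preceding proposition, and for associativity you correctly identify that the whole content lies in the identity $(p\otimes q)\invext\monster_1 = (p\invext\monster_1)\otimes(q\invext\monster_1)$, which you prove by noting both sides are $A$-invariant over $\monster_1$ and restrict to $p\otimes q$ over $\monster$, hence coincide by uniqueness of the $A$-invariant extension; the triple-product computation then goes through by unwinding.
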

  \begin{eg}
    If $T$ is stable then $\invtypes(\monster)=S(\monster)$ and $p\otimes q=\tp(a,b/\monster)$ where $a\models p$, $b\models q$ and $a\find \monster b$. If $T=\textsf{DLO}$ and $p(x)=\tp(+\infty/\monster)$, then $p(x)\otimes p(y)=p(x)\cup p(y)\cup\set{x>y}$.
  \end{eg}
  \subsubsection{Domination}
\begin{defin} \label{defin:domination}
Let $p\in S_x(\monster)$ and $q\in S_y(\monster)$.
\begin{enumerate}
\item We say that $p$ \emph{dominates} $q$ and write $p\doms q$ iff there are some small $A$ and some $r\in S_{xy}(A)$ such that
\begin{itemize}
\item $r\in S_{pq}(A)$, and
\item $p(x)\cup r(x,y)\proves q(y)$.
\end{itemize}
\item We say that $p$ and $q$ are \emph{domination-equivalent} and write $p\domeq q$ iff $p\doms q$ and $q\doms p$.
\item We say that $p$ and $q$ are \emph{equidominant} and write $p\equidom q$ iff  there are some small $A$ and some $r\in S_{xy}(A)$ such that
\begin{itemize}
\item $r\in S_{pq}(A)$,
\item $p(x)\cup r(x,y)\proves q(y)$, and
\item $q(y)\cup r(x,y)\proves p(x)$.
\end{itemize}
\end{enumerate}
\end{defin}
So $p\equidom q$ if and only if both $p\doms q$ and $q\doms p$ hold, and  both statements can be witnessed by the same $r$.
To put it differently, a direct definition of   $p\domeq q$ can be obtained by replacing, in the last clause of the definition of $p\equidom q$, the small type $r$ with another small type $r'$, possibly different from $r$. That the last two relations are in general distinct can be seen for instance in \textsf{DLO} together with a dense-codense predicate; see Example~\ref{eg:deq}.

Note that we are not requiring $p\cup r$ to be complete; in other words, domination is ``small-type semi-isolation'', as opposed to ``small-type isolation''. The finer relation of  \emph{semi-isolation}, also known as the \emph{global RK-order},\footnote{Strictly speaking, the original definition of the RK-order in~\cite[D\'efinition~1]{firstdefinition}  slightly differs from the relation that customarily bears the same name in the literature.} was studied  for instance in~\cite{tanovic}.
\begin{pr}
$\doms$ and $\equidom$ are respectively a preorder and an equivalence relation on $S_{<\omega}(\monster)$. Consequently, $\domeq$ is an equivalence relation as well.
\end{pr}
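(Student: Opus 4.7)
My plan is to handle reflexivity and symmetry first (they are immediate) and then tackle transitivity, which will be the only substantive step. For reflexivity I would fix any small $A$ and any $a \models p\restr A$, and set $r(x,y) \ceq \tp(a,a/A)$; since this type contains the formula $x=y$, both $p(x) \cup r(x,y) \proves p(y)$ and $p(y) \cup r(x,y) \proves p(x)$ hold trivially, witnessing $p \equidom p$ (and hence $p \doms p$). Symmetry of $\equidom$ is built into the definition: reading a witness $r(x,y)$ for $p \equidom q$ with $x$ and $y$ interchanged gives a witness for $q \equidom p$.

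For transitivity I expect the same construction to work simultaneously for $\doms$ and $\equidom$. Given witnesses $r_1 \in S_{xy}(A_1)$ for $p \doms q$ and $r_2 \in S_{yz}(A_2)$ for $q \doms r$, I would set $A \ceq A_1 \cup A_2$ (still small) and build a triple $(a,b,c)$ in a sufficiently saturated ambient monster as follows: pick $a \models p$; since $a \models r_1\restr x$, realize $b$ with $(a,b) \models r_1$, whence $b \models q$ globally by $p \cup r_1 \proves q$; then $b \models r_2\restr y$, so realize $c$ with $(b,c) \models r_2$, whence $c \models r$ globally by $q \cup r_2 \proves r$. Taking $s \ceq \tp(a,c/A) \in S_{xz}(A)$, the $x$- and $z$-projections are $p\restr A$ and $r\restr A$, so $s \in S_{pr}(A)$.

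To verify $p(x) \cup s(x,z) \proves r(z)$, I would suppose $a' \models p$ and $(a',c') \models s$, then use strong homogeneity of the ambient monster to pick an automorphism $\sigma$ fixing $A$ pointwise with $\sigma(a,c) = (a',c')$, and set $b' \ceq \sigma(b)$. Then $(a',b') \models r_1$ and $(b',c') \models r_2$, because $r_1, r_2$ are over subsets of $A$. The hard part will be promoting $b'$ from a mere realization of $q\restr A$ to a global realization of $q$; this is delivered precisely by the assumption $a' \models p$ globally combined with $(a',b') \models r_1$ and the entailment $p \cup r_1 \proves q$. Once $b' \models q$ globally is in hand, $q \cup r_2 \proves r$ forces $c' \models r$, as required. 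If $r_1, r_2$ also witness $q \doms p$ and $r \doms q$, running the symmetric chase starting from $c' \models r$ shows that the same $s$ yields $r(z) \cup s(x,z) \proves p(x)$, so $s$ witnesses $p \equidom r$ and transitivity of $\equidom$ follows at the same stroke.
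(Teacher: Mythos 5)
Your proof is correct and reaches the conclusion by a somewhat different route than the paper's. Both arguments hinge on realising the concatenation $p_0\cup r\cup p_1\cup s\cup p_2$ (in the paper's notation): the paper does so by realising the two pairs $(a,b)\models p_0\cup r$ and $(\tilde b,\tilde c)\models p_1\cup s$ separately and splicing them via an automorphism fixing $\monster$, whereas you chase $a\mapsto b\mapsto c$ directly, using $p\cup r_1\proves q$ and then $q\cup r_2\proves p_2$ to promote each new coordinate to a global realisation; both work. The two proofs diverge more in how they produce and verify the witnessing type over $A$. The paper, via compactness, extracts for each $\phi\in p_2$ an explicit $L(A)$-formula $\sigma_\phi(x,z)=\exists y\,(\chi\wedge\psi)$ with $p_0\cup\set{\sigma_\phi}\proves\phi$ (and dually $\rho_\delta$ for $\delta\in p_0$), then observes that any completion of $\set{\sigma_\phi}\cup\set{\rho_\delta}\cup(p_0\restr A)\cup(p_2\restr A)$ inside $S_{p_0p_2}(A)$ does the job. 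You instead take $s\ceq\tp(a,c/A)$ outright and check $p\cup s\proves p_2$ semantically: transport an arbitrary $(a',c')\models p\cup s$ onto $(a,c)$ by an automorphism over $A$, reinstate the middle coordinate $b'\ceq\sigma(b)$, and rerun the chase to conclude $c'\models p_2$. This trades the paper's formula-level compactness bookkeeping for a second appeal to strong homogeneity; both deliver a concrete witness in $S_{pp_2}(A)$ (and your $s$ is in fact one of the completions the paper would accept, since $b$ itself witnesses the $\sigma_\phi$'s). Your observation that the same $s$ handles $\equidom$ by running the symmetric chase is precisely the paper's concluding remark that the argument symmetrises when the witnesses do.
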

\begin{proof}
  The only non-obvious thing is transitivity. We prove it for $\equidom$ first, as the proof for $\doms$ is even easier. Suppose that $r(x,y)\in S_{p_0p_1}(A_r)$ witnesses that $p_0(x)\equidom p_1(y)$ and that $s(y,z)\in S_{p_1p_2}(A_s)$ witnesses $p_1(y)\equidom p_2(z)$. Up to taking a larger $A$ and then completing $r$, $s$ to types with parameters from $A$, we can assume $A_r=A_s=A$. By hypothesis and compactness, for every formula $\phi(z)\in p_2$ there are formulas $\psi(y,z)\in s$, $\theta(y)\in p_1$ and $\chi(x,y)\in r$ such that $p_0\cup \set{\chi(x,y)}\proves \theta(y)$ and $\set{\theta(y)\land \psi(y,z)}\proves \phi(z)$. If we let  $\sigma_\phi(x,z)\coloneqq \exists y\; (\chi(x,y)\land \psi(y,z))$, then  $p_0(x)\cup \set{\sigma_\phi(x,z)}\proves \phi(z)$. Moreover,  we have $\sigma_\phi(x,z)\in L(A)$. Analogously, for each $\delta(x)\in p_0$ we can find $\rho_\delta(z,x)\in L(A)$ such that $p_2(z)\cup \set{\rho_\delta(z,x)}\proves\delta(x)$, obtained in the same way \emph{mutatis mutandis}. It is now enough to show that the set \[\Phi\coloneqq p_0(x)\cup r(x,y)\cup p_1(y)\cup s(y,z)\cup p_2(z)\] is consistent, as this will in particular entail consistency of  \[\set{\sigma_\phi\mid \phi\in p_2}\cup \set{\rho_\delta\mid \delta\in p_0}\cup(p_0\restr A)\cup (p_2\restr A)\] which will therefore have a completion to a type in $S_{p_0p_2}(A)$  witnessing $p_0\equidom p_2$. To see that $\Phi$ is consistent, in a larger monster $\monster_1$ let $(a,b)\models p_0\cup r$ and $(\tilde b, \tilde c)\models p_1\cup s$. Since $\tp(b/\monster)=p_1=\tp(\tilde b/\monster)$, there is $f\in \aut(\monster_1/\monster)$ such that $f(\tilde b)=b$, and  then $(a,b,f(\tilde c))\models\Phi$.

The proof for  $\doms$ is exactly the same, except we do not need to consider the $\rho_\delta$ formulas.
\end{proof}
As we are interested in the interaction of these notions with $\otimes$, we restrict our attention to  quotients of $\invtypes(\monster)$. Note that, by the following lemma, whether or not $p\in \invtypes(\monster)$ only depends on its equivalence class.

\begin{lemma}\label{lemma:invariancepreserved}
  If $p\in \invtypes_x(\monster, A)$ and $r\in S_{xy}(B)$ are such that $p\cup r$ is consistent and  $p\cup r\proves q\in S_y(\monster)$, then $q$ is invariant over $AB$.
\end{lemma}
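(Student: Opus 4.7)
The plan is to verify $AB$-invariance of $q$ directly from the definition: given $\phi(y;z) \in L$ and $c, c' \in \monster$ with $c \equiv_{AB} c'$, I want to show that $\phi(y;c) \in q$ if and only if $\phi(y;c') \in q$. Since $q$ is a complete global type and $p \cup r \proves q$, the membership $\phi(y;c) \in q$ is equivalent to $p \cup r \proves \phi(y;c)$; so I will reduce everything to a statement about entailment.

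Assuming $\phi(y;c) \in q$, by compactness I would extract a finite witness: there exist $\pi(x;d) \in p$ with $d$ from $\monster$ and $\rho(x,y;b) \in r$ with $b$ from $B$ such that
\[
\pi(x;d) \wedge \rho(x,y;b) \proves \phi(y;c).
\]
Working in a sufficiently saturated ambient monster $\monster_* \succ \monster$ containing $B$ and all relevant parameters, I would then pick $\sigma \in \aut(\monster_*/AB)$ with $\sigma(c) = c'$, which exists by strong homogeneity because $c \equiv_{AB} c'$. Applying $\sigma$ to the displayed entailment and using that $\sigma$ fixes $b$ (as $b \in B$) and $c$ (sent to $c'$) gives
\[
\pi(x;\sigma(d)) \wedge \rho(x,y;b) \proves \phi(y;c').
\]

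Now I invoke $A$-invariance of $p$: since $\sigma$ fixes $A$ pointwise we have $\sigma(d) \equiv_A d$, hence $\pi(x;\sigma(d)) \in p$. Combined with $\rho(x,y;b) \in r$ this yields $p \cup r \proves \phi(y;c')$, so $\phi(y;c') \in q$. The reverse implication is obtained by the same argument with $c$ and $c'$ interchanged (using $\sigma^{-1}$), and this gives the required $AB$-invariance.

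The proof is essentially routine, so I do not anticipate serious obstacles; the only mild subtlety worth emphasising is that $B$ need not be contained in $\monster$ and may even be large relative to it, so the automorphism $\sigma$ must be chosen in a larger ambient monster $\monster_*$ rather than in $\monster$ itself, while the witnessing parameter $d$ coming from the formula of $p$ still lies in $\monster$, which is precisely what makes the $A$-invariance of $p$ applicable to $d$ and $\sigma(d)$.
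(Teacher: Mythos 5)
Your argument is correct and is essentially the paper's proof unpacked: both observe that $p\cup r$ is a set of $L(\monster)$-formulas fixed by $\aut(\monster/AB)$ (by $A$-invariance of $p$ and because $r$ has parameters in $B$), and that any $\sigma\in\aut(\monster/AB)$ must therefore fix the complete type $q$ that $p\cup r$ entails. You carry this out formula by formula via compactness; the paper states it in one line at the level of the whole set, but it is the same mechanism.

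The one point I would push back on is the closing remark that ``$B$ need not be contained in $\monster$ and may even be large relative to it.'' This is not the intended reading, and if taken literally it opens a genuine gap in your argument. The conclusion ``$q$ is invariant over $AB$'' is meant to place $q$ in $\invtypes_y(\monster,AB)$, which requires $AB$ to be a small subset of $\monster$ (indeed, in all uses of this lemma $r$ witnesses domination, so its parameter set is small and inside $\monster$). If instead you allow $B\not\subseteq\monster$ and choose $\sigma\in\aut(\monster_*/AB)$ for a larger ambient monster $\monster_*$, there is no reason $\sigma$ should carry $\monster$ to itself; then $\sigma(d)$ need not lie in $\monster$, and the step ``$\pi(x;\sigma(d))\in p$'' is meaningless, since $p$ is a type over $\monster$ and cannot contain formulas with parameters outside $\monster$. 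Under the intended reading $B\smallsubset\monster$, simply take $\sigma\in\aut(\monster/AB)$ (which exists by strong homogeneity of $\monster$) and the issue evaporates, giving exactly the argument you wrote.
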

\begin{proof}
The set of formulas $p\cup r$ is fixed by $\aut(\monster/AB)$ and implies $q$. As $q$ is complete, the conclusion follows.
\end{proof}
Anyway, $q$ will not be in general $A$-invariant: for instance, by the proof of point~\ref{point:min} of Proposition~\ref{pr:realneut}, for every $p$ and every realised $q$ we have $p\doms q$, and it is enough to take $q$ realised in $\monster\setminus \dcl(A)$ to get a counterexample.

\begin{defin}
  Let $\invtilde$ be the quotient of $\invtypes(\monster)$ by $\domeq$, and $\invbar$ the quotient of $\invtypes(\monster)$ by $\equidom$.
\end{defin}
Note that, if $p\cup r\proves q$, by passing to a suitable extension of $r$ there is no harm in enlarging its domain, provided it stays small.  This sort of manipulation will from now on be done tacitly. 
\begin{rem}\label{rem:changeinterminology}
  In~\cite{hhm}, the name \emph{domination-equivalence} is used to refer to $\equidom$  (no mention is made of $\doms$ and $\domeq$). The reason for this change in  terminology is to ensure consistency with the notions with the same names classically defined for stable theories, which coincide with the ones just defined (see Section~\ref{section:stabletheories}). As $\invtilde$  carries a poset structure, and  is in some sense better behaved than $\invbar$, we mostly focus on the former. 
\end{rem}
\begin{eg}\label{eg:deq}\*
  \begin{enumerate}
  \item It is easy to see that, in any strongly minimal theory, two global types are domination-equivalent, equivalently equidominant, precisely when they have the same dimension over $\monster$. 
  \item In \textsf{DLO}, if $p(x)$ is the type at $+\infty$, then  $p(x)\equidom p(y)\otimes p(z)$, as can be easily seen by using some $r$ containing the formula $x=z$. 
  \item The two equivalence relations differ in the theory \textsf{DLOP} of a \textsf{DLO} with a dense-codense predicate $P$. In this case, if $p(x)$ is the type at $+\infty$ in $P$, and $q(y)$ is the type at $+\infty$ in $\neg P$, then $p(x)\doms q(y)$ (resp.~$p(x)\domd q(y)$) can be witnessed by any $r$ containing $y>x$ (resp.~$y<x$). To show $p\nequidom q$,  take any $r\in S_{pq}(A)$. Since $(p(x)\restr \emptyset)\cup(q(y)\restr \emptyset)\proves P(x)\land \neg P(y)$ we have $r\proves x\ne y$, and since $A$ is small there is $b\in \monster$ such that $b>A$. It  follows from quantifier elimination that, if for instance $r\proves x>y$, then $p\cup r\centernot \proves y>b$, and a fortiori $p\cup r\centernot \proves q$.  The reason the two equivalence relations may differ is, simply, that even if there are $r_0$ and $r_1$ such that  $p\cup r_0\proves q$ and $q\cup r_1\proves p$, we may still have that the union $r_0\cup r_1$ is inconsistent.
  \item  The two equivalence relations may differ  even in a stable theory, as shown by~\cite[Example~5.2.9]{wagner} together with the fact (Proposition~\ref{pr:sameasclassic}) that the classical definitions via forking (see Definition~\ref{defin:cldomtypes}) in stable theories coincide with the ones in Definition~\ref{defin:domination}.
  \end{enumerate}
\end{eg}

\subsubsection{Interaction with $\otimes$}
We start our investigation of the compatibility of $\otimes$ with $\doms$ and $\equidom$ with two easy lemmas. While the first one will not be needed until later, the second one will be used repeatedly.
\begin{lemma}\label{lemma:babycraig}
  If $A\ssq B\ssq C$, $p_x,q_y\in S(C)$ and $r\in S_{pq}(A)$ is such that  $p\cup r\proves q$, then $(p\restr B)\cup r\proves q\restr B$.
\end{lemma}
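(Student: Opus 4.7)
The plan is to argue formula-by-formula via compactness, reducing to a one-variable quantifier trick that is the ``baby'' form of Craig interpolation suggested by the name.

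Fix any $\phi(y)\in q\restr B$; I need to show $(p\restr B)\cup r\proves \phi(y)$. Since $p\cup r\proves q$ and $\phi\in q$, compactness produces $\psi(x)\in p$ and $\rho(x,y)\in r$ such that $\psi(x)\land \rho(x,y)\proves \phi(y)$. Note that $\psi\in L(C)$, while $\rho\in L(A)\subseteq L(B)$ and $\phi\in L(B)$, so the ``bad'' parameters (those outside $B$) sit only in $\psi$.

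The key step is to quantify the $y$-variable away, setting
\[
\chi(x)\ceq \exists y\bigl(\rho(x,y)\land \neg\phi(y)\bigr).
\]
Validity of $\psi\land \rho\to \phi$ immediately gives validity of $\psi\to\neg\chi$, hence $\psi(x)\proves \neg\chi(x)$; since $p$ is a complete type containing $\psi$, also $\neg\chi(x)\in p$. Crucially, $\chi(x)\in L(B)$ because its parameters come only from $A\cup B=B$, so $\neg\chi(x)\in p\restr B$. Now $(p\restr B)(x)\cup r(x,y)$ contains both $\neg\chi(x)$ and $\rho(x,y)$, and by the very shape of $\chi$ these two formulas together entail $\phi(y)$. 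This yields the desired implication for the arbitrary $\phi\in q\restr B$.

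There is no real obstacle here: once one notices that the parameters of $\rho$ already lie in $A\subseteq B$, the single existential quantifier $\exists y$ is the whole content; in particular no saturation or automorphism argument is needed, and the compactness reduction to one formula $\psi$ of $p$ is the only non-trivial move.
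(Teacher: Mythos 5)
Your proof is correct and is essentially the same argument as the paper's: the paper's compactness step produces $\chi(x,y)\in r$ with $p\proves\forall y\,(\chi(x,y)\implica\psi(y))$ and observes that this $\forall$-formula lies in $L(B)$, while you phrase the same observation via the logically equivalent $\neg\exists y\,(\rho(x,y)\land\neg\phi(y))$. The intermediate naming of a single formula $\psi(x)\in p$ is an inessential cosmetic difference.
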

\begin{proof}
Let $\psi(y) \in q\restr B$. By hypothesis and compactness there is $\chi(x,y)\in r$ such that $p \proves \forall y\; (\chi(x,y)\implica\psi(y))$.
As $A\ssq B$,  this formula is in $p\restr B$.
\end{proof}

\begin{lemma}\label{lemma:rsufficesforinvext}
If $p_x,q_y\in\invtypes(\monster, A)$ and $r\in S_{pq}(A)$ is such that $p\cup r\proves q$, then for all sets of parameters $B\supseteq \monster$ we have  $(p\invext B)\cup r\proves q\invext B$.
\end{lemma}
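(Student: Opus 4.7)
The plan is to first pass to a monster $\monster_* \succ \monster$ containing $B$, establish the analogous implication at that level, and then descend back to $B$ via Lemma~\ref{lemma:babycraig}.

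Fix such $\monster_*$, and set $P \ceq p \invext \monster_*$ and $Q \ceq q \invext \monster_*$; both are $A$-invariant types in $S(\monster_*)$. The key claim is that $P \cup r \proves Q$. To see this, let $\phi(y;c) \in Q$ be arbitrary, with $\phi(y;w) \in L$ and $c \in \monster_*$. By the characterisation of $q \invext \monster_*$ recalled above, there is $c' \in \monster$ with $c' \equiv_A c$ and $\phi(y;c') \in q$. Since $p \cup r \proves q$, compactness yields $\theta(x;d) \in p$ (with $\theta(x;w) \in L$ and $d \in \monster$) and $\chi(x,y) \in r$ such that $\{\theta(x;d), \chi(x,y)\} \proves \phi(y;c')$. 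Now pick $f \in \aut(\monster_*/A)$ with $f(c') = c$, and set $d^* \ceq f(d)$; since $\chi(x,y) \in L(A)$ is fixed by $f$, applying $f$ to the implication yields $\{\theta(x;d^*), \chi(x,y)\} \proves \phi(y;c)$. Because $d^* \equiv_A d$ and $P$ is $A$-invariant, $\theta(x;d^*) \in P$, so $P \cup r \proves \phi(y;c)$. As $\phi(y;c) \in Q$ was arbitrary, this establishes the claim.

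To finish, observe that $P \restr A = p \restr A$ and $Q \restr A = q \restr A$, so $r \in S_{PQ}(A)$. Applying Lemma~\ref{lemma:babycraig} with $A \ssq B \ssq \monster_*$ to the types $P, Q \in S(\monster_*)$ then gives $(P \restr B) \cup r \proves Q \restr B$. By uniqueness of the $A$-invariant extension, $P \restr B = p \invext B$ and $Q \restr B = q \invext B$, which is the desired conclusion.

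The main point of delicacy is that the parameter $d^* = f(d)$ obtained by the automorphism transfer generally does not lie in $B$, so $\theta(x;d^*)$ cannot serve directly as a witness inside $p \invext B$. This is exactly what forces the detour through the larger model $\monster_*$ and makes Lemma~\ref{lemma:babycraig} the natural tool for returning to $B$.
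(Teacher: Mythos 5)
Your proof is correct, but it takes a genuinely different route from the paper's. The paper argues directly at the level of $B$: given $\phi(y;b)\in q\invext B$, pick $\tilde b\in\monster$ with $\tilde b\equiv_A b$ so that $\phi(y;\tilde b)\in q$, take $\psi(x,y)\in r$ with $p\proves\forall y\,(\psi(x,y)\implica\phi(y;\tilde b))$ by compactness, and then observe that $\forall y\,(\psi(x,y)\implica\phi(y;w))$ is an $L(A)$-formula in $x$ whose instance at $\tilde b$ lies in $p$; by the very definition of $p\invext B$ (via the clopen sets $d_p\chi$), its instance at $b$ lies in $p\invext B$, so $(p\invext B)\cup r\proves\phi(y;b)$. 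No automorphism and no auxiliary model are needed. You instead extract a separate witness $\theta(x;d)\in p$, transfer it by an automorphism of a larger monster $\monster_*\supseteq B$, and descend back to $B$ via Lemma~\ref{lemma:babycraig} together with uniqueness of invariant extensions. You correctly identify that the image parameter $d^*=f(d)$ need not lie in $B$, and your workaround is sound. But this obstruction is an artefact of keeping the witness formula parametrised by a tuple from $\monster$; the paper's move of ``quantifying out $y$'' bundles the witness into a single $L(A)$-formula whose only non-$A$ parameter is $\tilde b$ itself, and the $A$-invariant extension mechanism then swaps $\tilde b$ for $b$ for free. The upshot: your proof is valid and self-contained, but the paper's is shorter and avoids both the automorphism and the detour through $\monster_*$.
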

\begin{proof}
  Let $\phi(y;w)$ be an $L(\emptyset)$-formula and $b\in B$ be such that $\phi(y;b)\in q\invext B$. Pick any $\tilde b\in \monster$ such that $\tilde b\equiv_A b$. By definition of $q\invext B$ we have $\phi(y;\tilde b)\in q$, so by hypothesis and compactness there is an $L(A)$-formula $\psi(x,y)\in r(x,y)$ such that $p\proves \forall y\; \bigl(\psi(x,y)\implica \phi(y;\tilde b)\bigr)$.
But then, by definition of $p\invext B$ and the fact that $\psi\in L(A)$ we have $p\invext B\proves \forall y\; \bigl(\psi(x,y)\implica \phi(y; b)\bigr)$, and since $\psi\in r$ we get $(p\invext B)\cup r\proves \phi(y;b)$.
\end{proof}

\begin{notation}
We adopt from now on the following conventions. The letter $A$ continues to denote a small set. The symbols $p$, $q$, possibly with subscripts, denote global $A$-invariant types, and $r$ stands for an element of, say, $S_{pq}(A)$ witnessing  domination or equidominance.
\end{notation}
The first use we make of Lemma~\ref{lemma:rsufficesforinvext} is to prove the following statement, which generalises~\cite[Corollaire~11]{firstdefinition}.
\begin{lemma}\label{lemma:atleastontheleft}
  Suppose $p_0(x)\cup r(x,y)\proves p_1(y)$, and let $s\coloneqq r(x,y)\cup \set{z=w}$.
  Then $(p_0(x)\otimes q(z))\cup s\proves p_1(y)\otimes q(w)$.
  In particular if $p_0\doms p_1$ then $p_0\otimes q\doms p_1\otimes q$, and the same holds replacing $\doms$ with $\equidom$.
\end{lemma}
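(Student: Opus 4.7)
The plan is to prove the displayed entailment by a direct semantic argument using Lemma~\ref{lemma:rsufficesforinvext}, and then read off the $\doms$ and $\equidom$ statements as corollaries.

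For the display, I would take, in a larger monster, a realisation $(a,b,c,c')$ of $(p_0(x)\otimes q(z))\cup r(x,y)\cup\set{z=w}$, so that $c=c'$ and it suffices to show $(b,c)\models p_1(y)\otimes q(w)$ (read with $c$ in place of $w$). Unpacking the definition of the tensor, $(a,c)\models p_0\otimes q$ amounts to $c\models q$ (over $\monster$) together with $a\models p_0\invext \monster c$; also $(a,b)\models r$. By Lemma~\ref{lemma:invariancepreserved} the type $p_1$ is $A$-invariant, so I may apply Lemma~\ref{lemma:rsufficesforinvext} with $B=\monster c$ to the hypothesis $p_0(x)\cup r(x,y)\proves p_1(y)$, obtaining $(p_0\invext \monster c)\cup r\proves p_1\invext \monster c$. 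Then $a\models p_0\invext\monster c$ and $(a,b)\models r$ force $b\models p_1\invext\monster c$, which, combined with $c\models q$, is exactly the definition of $(b,c)\models p_1(y)\otimes q(w)$. This proves the display.

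For the ``in particular'' clause on $\doms$: given a witness $r\in S_{p_0p_1}(A)$ of $p_0\doms p_1$, I would let $r'\ceq \tp(a,b,c,c/A)$ for some realisation $(a,b,c)$ as above. Then $r'$ is a complete type over $A$ in the variables $(x,y,z,w)$; it contains $r\cup\set{z=w}$ by construction; it contains $(p_0\otimes q)\restr A$ because $(a,c)\models p_0\otimes q$; and it contains $(p_1\otimes q)\restr A$ because the argument above gave $(b,c)\models p_1\otimes q$. So $r'\in S_{p_0\otimes q,\,p_1\otimes q}(A)$, and the display yields $(p_0\otimes q)\cup r'\proves p_1\otimes q$. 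For equidominance one uses the very same $r'$: since $r$ also satisfies $p_1\cup r\proves p_0$, the symmetric instance of the display gives $(p_1\otimes q)\cup r'\proves p_0\otimes q$, and a single witness $r'$ now certifies $p_0\otimes q\equidom p_1\otimes q$.

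The only mildly delicate point is clerical: the definition of $\doms$/$\equidom$ requires a complete type over some small $A$ sitting above the relevant restrictions, whereas the raw candidate $r\cup\set{z=w}$ is only a partial type. Producing the completion from an honest realisation $(a,b,c,c)$ sidesteps this, and ensures both the required completeness and the inclusions $(p_0\otimes q)\restr A,\,(p_1\otimes q)\restr A\subseteq r'$ come for free. No compactness/syntactic manipulation is needed beyond the one already hidden in Lemma~\ref{lemma:rsufficesforinvext}.
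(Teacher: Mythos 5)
Your proof is correct and takes essentially the same route as the paper: both hinge on applying Lemma~\ref{lemma:rsufficesforinvext} with $B=\monster c$ (the paper calls the realisation $b$) to lift $p_0\cup r\proves p_1$ to $(p_0\invext\monster c)\cup r\proves p_1\invext\monster c$. You phrase the argument semantically via a realisation $(a,b,c,c)$ and the standard characterisation $(a,c)\models p_0\otimes q\iff c\models q\wedge a\models p_0\invext\monster c$, whereas the paper works syntactically formula by formula and then observes that $s\cup(p_0\otimes q)\restr A\cup(p_1\otimes q)\restr A$ has a completion; the explicit choice $r'=\tp(a,b,c,c/A)$ is just a clean way of exhibiting that completion.
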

\begin{proof}
Choose any $b\models q(z)$. For any  $\phi(y,z;t)\in L(\emptyset)$ and $d\in \monster$  such that $\phi(y,z;d)\in p_1(y)\otimes q(z)$  we have, by definition of $\otimes$, that $p_1(y)\invext \monster b\models \phi(y, b; d)$.
By Lemma~\ref{lemma:rsufficesforinvext} there is some $L(A)$-formula $\psi(x,y)\in r(x,y)$ such that $p_0(x)\invext \monster b\models \forall y\; \bigl(\psi(x,y)\implica\phi(y, b; d)\bigr)$, hence $p_0(x)\otimes q(z)\models  \forall y\;  \bigl(\psi(x,y)\implica\phi(y, z; d)\bigr)$.
In particular, since $\psi\in r$, we have $(p_0(x)\otimes q(z))\cup r\proves \phi(y,z;d)$. Therefore any completion of $s\cup \bigl((p_0(x)\otimes q(z))\restr A\bigr)\cup \bigl((p_1(y)\otimes q(w))\restr A\bigr)$ witnesses that  $p_0(x)\otimes q(z)\doms p_1(y)\otimes q(w)$.

In the special case where  the same $r$ also witnesses $p_1\doms p_0$, for same $s$ we have  that $s\cup \bigl((p_0(x)\otimes q(z))\restr A\bigr)\cup \bigl((p_1(y)\otimes q(w))\restr A\bigr)$ witnesses $p_1\otimes q\doms p_0\otimes q$, and we get $p_1\otimes q\equidom p_0\otimes q$.
\end{proof}
One may expect a similar result to hold when multiplying on the left by $p$ a relation of the form $q_0\doms q_1$, and indeed it was claimed (without proof) in~\cite{hhm} that $\equidom$ is a congruence with respect to $\otimes$. Unfortunately, this turns out not be true in general: we will see in Section~\ref{section:cntrex} that it is possible to have  $q_0\equidom q_1$ and $p\otimes q_0\ndoms p\otimes q_1$ simultaneously. For the time being, we assume this does not happen as an hypothesis and explore some of  its immediate consequences.
\begin{defin}
For a theory $T$, we say that \emph{$\otimes$ respects} (or \emph{is compatible with}) $\doms$ (resp.~$\equidom$)  iff for all global invariant types $p$, $q_0$, $q_1$, if $q_0\doms q_1$ (resp.~$q_0\equidom q_1$) then  $p\otimes q_0\doms p\otimes q_1$ (resp.~$p\otimes q_0\equidom p\otimes q_1$).
\end{defin}

\begin{co}\label{co:criticalpart}\*
  \begin{enumerate}
  \item $\otimes$ respects $\doms$ if and only if
    $(\invtypes(\monster), \otimes, \doms)$ is a preordered
    semigroup. In this case $\domeq$ is a congruence with respect to $\otimes$, and the latter induces on $(\invtilde, \doms)$ the structure of a
    partially ordered semigroup.
  \item    $\otimes$ respects $\equidom$ if and only if  $\equidom$ is a congruence with respect to $\otimes$.
\end{enumerate}
\end{co}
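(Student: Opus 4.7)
The plan is to reduce the corollary to Lemma~\ref{lemma:atleastontheleft}, which already furnishes compatibility of $\otimes$ with both $\doms$ and $\equidom$ on the left factor, and to glue it with the ``respects'' hypothesis --- which handles the right factor --- by means of the transitivity of the relation involved.

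For the nontrivial direction of~(1), I would assume that $\otimes$ respects $\doms$ and verify the full compatibility: if $p_0\doms p_1$ and $q_0\doms q_1$, then $p_0\otimes q_0 \doms p_1\otimes q_1$. Applying Lemma~\ref{lemma:atleastontheleft} to $p_0 \doms p_1$ yields $p_0\otimes q_0 \doms p_1\otimes q_0$; the ``respects'' hypothesis, applied with $p_1$ on the left and $q_0\doms q_1$, yields $p_1\otimes q_0 \doms p_1\otimes q_1$; transitivity of $\doms$ closes the argument. Conversely, specialising compatibility to $p_0=p_1$ recovers the ``respects'' property verbatim, so the two conditions are equivalent. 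For the remainder of~(1), applying the compatibility in both directions shows that $p_0\domeq p_1$ and $q_0\domeq q_1$ entail $p_0\otimes q_0\domeq p_1\otimes q_1$, so $\domeq$ is a congruence and $\otimes$ descends to $\invtilde$; the preorder $\doms$ descends by its own transitivity and becomes antisymmetric on $\invtilde$ by the very construction of the quotient, yielding the partially ordered semigroup structure.

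Item~(2) is then proved by the identical two-step argument, with $\equidom$ everywhere in place of $\doms$: Lemma~\ref{lemma:atleastontheleft} gives $p_0\otimes q_0 \equidom p_1\otimes q_0$ from $p_0\equidom p_1$, the hypothesis gives $p_1\otimes q_0\equidom p_1\otimes q_1$ from $q_0\equidom q_1$, and transitivity of $\equidom$ (from the preceding proposition) glues them; the converse is again the obvious specialisation to a constant left factor. I do not foresee any genuine obstacle: the whole corollary is formal once Lemma~\ref{lemma:atleastontheleft} is in hand, the only mild care being to note that the descent of $\doms$ to the quotient does not require the congruence property --- only the transitivity that $\doms$ already enjoys --- while the descent of $\otimes$ does.
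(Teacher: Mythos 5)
Your proposal is correct and unpacks exactly what the paper's one-line proof (``Everything follows at once from Lemma~\ref{lemma:atleastontheleft}'') intends: the left-factor compatibility from the lemma is chained via transitivity with the right-factor compatibility supplied by the ``respects'' hypothesis, and the converse is the trivial specialisation.
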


\begin{proof}
Everything follows at once from Lemma~\ref{lemma:atleastontheleft}.
\end{proof}

\begin{lemma}
  Suppose that $p,q\in \invtypes(\monster)$ and $p$ is realised. The following are equivalent:
  \begin{enumerate*}
  \item $p\equidom q$.
  \item $p\domeq q$.
  \item $p\doms q$.
  \item $q$ is realised.
  \end{enumerate*}
\end{lemma}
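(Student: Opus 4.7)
The implications $(1)\then (2)\then (3)$ are immediate from the definitions, since witnessing $p\equidom q$ by some $r$ in particular witnesses $p\domeq q$, and $p\domeq q$ implies $p\doms q$. So the work lies in proving $(3)\then (4)$ and $(4)\then (1)$, and the main (mild) obstacle is $(3)\then (4)$.

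For $(3)\then (4)$, suppose $p=\tp(a/\monster)$ with $a\in \monster$, and take small $A$ and $r\in S_{pq}(A)$ with $p\cup r\proves q$. I would first enlarge $A$ so that $a\in A$ (recall invariance over a small set is preserved under enlargement), so that $p\restr A$, and hence $r$, contains the formula $x=a$. Then I would set $\sigma(y)\coloneqq\set{\psi(a,y)\mid \psi(x,y)\in r}$; this is a complete type over $A$ in the variable $y$ (since $r$ is complete over $A$ and already decides $x=a$). The key observation is that the entailment $p\cup r\proves q$ reduces, modulo $x=a\in p$, to $\sigma(y)\proves q(y)$: for every $\phi(y;m)\in q$ (with $m\in\monster$), compactness gives a finite $r_0\subseteq r$ such that $\set{x=a}\cup r_0\proves \phi(y;m)$, hence the substituted version lies in $\sigma$. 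Now $\sigma$ is a partial type over the small set $A$, so by saturation of $\monster$ there is $b\in\monster$ with $b\models\sigma$; then $b\models q$, showing that $q$ is realised in $\monster$.

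For $(4)\then (1)$, suppose $p=\tp(a/\monster)$ and $q=\tp(b/\monster)$ with $a,b\in\monster$. Set $A\coloneqq\set{a,b}$ and let $r(x,y)\coloneqq\tp(a,b/A)$; this is a complete type in $S_{pq}(A)$ containing both $x=a$ and $y=b$. Then $r$ alone entails both $p$ and $q$ (since the formulas $x=a$ and $y=b$ pin down the respective global types), so \emph{a fortiori} $p\cup r\proves q$ and $q\cup r\proves p$, witnessing $p\equidom q$ with the same $r$. This closes the cycle $(1)\then(2)\then(3)\then(4)\then(1)$.

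The only subtlety worth highlighting is why one may assume $a\in A$ in the direction $(3)\then (4)$: even though $r$ was originally a type over some $A$ with $p$ merely $A$-invariant, nothing is lost by passing to a completion over $A\cup\set{a}$, which is still small, and then $p\restr (A\cup\set a)\ni (x=a)$ is automatic. Everything else is a routine compactness-and-saturation argument.
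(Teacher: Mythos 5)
Your proof is correct and follows essentially the same route as the paper's: the key step in $(3)\Rightarrow(4)$ is that, since $\set{x=a}\proves p$, the set $\set{x=a}\cup r$ (or, in your formulation, its substituted version $\sigma(y)$) is a consistent small partial type entailing $q$, hence realised in $\monster$ by saturation; and $(4)\Rightarrow(1)$ is handled identically with $r\supseteq\set{x=a,\,y=b}$. The enlargement of $A$ to contain $a$ that you flag as a subtlety is a harmless extra step — the paper simply works with $\set{x=a}\cup r$ directly as a type over the small set $A\cup\set{a}$ without completing $r$ there — but it causes no problems and makes the substitution explicit.
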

\begin{proof}
The implications $1\allora 2\allora 3$ are true by definition, even when $p$ is not realised.  Let $p=\tp(a/\monster)$, where $a\in \monster$.

  For $3\allora 4$ suppose that $r\in S_{pq}(A)$ is such that   $p\cup r\proves q$. Since $\set{x=a}\proves p$, we have $\set{x=a}\cup r\proves q$. But since $\set{x=a}\cup r$ is a small  type, it is realised in $\monster$ by some $(a,b)$, and clearly $b\models q$.

For $4\allora 1$ suppose that for some $b\in \monster$ we have  $q=\tp(b/\monster)$ and let $A$  be  any small set containing $a$ and $b$. Clearly, $(x=a)\land (y=b)$ implies a complete type $r\in S_{xy}(A)$ containing $(p\restr A)\cup (q\restr A)$, and since $r(x,y)\proves p(x)\cup q(y)$ we have that $r$ witnesses $p\equidom q$.
\end{proof}
\begin{lemma}\label{lemma:ignorerealised}
  Suppose that $p_x, q_y\in \invtypes(\monster)$ and that $p$ is realised by $a\in \monster$. Then $\set{x=a}\cup q(y)\proves p(x)\cup q(y)\proves p(x)\otimes q(y)=q(y)\otimes p(x)$. Moreover, $p(x)\otimes q(y)\equidom q(y)$.
\end{lemma}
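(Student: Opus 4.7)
The plan is to unwind the definition of $\otimes$ using the fact that $a\in\monster$ forces $x=a$ to belong to $p(x)$ and renders the invariant extension $p\invext \monster b$ to any $\monster b\supseteq\monster$ completely explicit, namely $p\invext \monster b=\tp(a/\monster b)$ (an extension of $p$ that is plainly $\set{a}$-invariant).

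First I would note that, since $p=\tp(a/\monster)$ and $a\in\monster$, the formula $x=a$ lies in $p$, so modulo the ambient elementary diagram the set $\set{x=a}$ already entails $p(x)$; this gives the first entailment. For the second, fix $b\models q$ and use $p\invext \monster b=\tp(a/\monster b)$ to conclude that, for every $\phi(x,y)\in L(\monster)$,
\[
\phi(x,y)\in p(x)\otimes q(y)\iff{}\models\phi(a,b)\iff \phi(a,y)\in q(y).
\]
Combined with $p(x)\proves x=a$, this yields $p(x)\cup q(y)\proves\phi(x,y)$ for every such $\phi$, hence the chain of implications up to $p(x)\otimes q(y)$.

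For the equality $p(x)\otimes q(y)=q(y)\otimes p(x)$, I would run the analogous computation on the other side. Any realisation $a'$ of $p$ must satisfy $a'=a$ (since $x=a\in p$), so $q\invext\monster a=q$ (as $\monster a=\monster$), and the definition of $\otimes$ gives
\[
\phi(x,y)\in q(y)\otimes p(x)\iff \phi(a,y)\in q(y),
\]
which is exactly the condition derived above for $p(x)\otimes q(y)$.

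Finally, for $p(x)\otimes q(y)\equidom q(y)$, I would pick a small $A\ni a$ over which $q$ is invariant, fix $b\models q\restr A$, and let $r(x,y;y')$ be the type of $(a,b,b)$ over $A$. The computations above give $\tp(a,b/A)=(p\otimes q)\restr A$ and $\tp(b/A)=q\restr A$, so $r$ lies in the appropriate space $S_{(p\otimes q)\,q}(A)$ of candidate witnesses. Since $r$ contains both $x=a$ and $y=y'$, the set $r\cup q(y')$ entails $\set{x=a}\cup q(y)$ and hence $p(x)\otimes q(y)$ by what has just been shown, while $r\cup (p(x)\otimes q(y))$ entails $q(y')$ via $q\subseteq p\otimes q$ together with $y=y'$. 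The subtlety to watch for is that the \emph{same} $r$ must witness both directions simultaneously; packaging $x=a$ and $y=y'$ inside a single small type is what secures this and yields equidominance rather than merely domination-equivalence.
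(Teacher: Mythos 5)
Your proof is correct and takes essentially the same approach as the paper: the paper declares the chain of entailments and the equality ``clear'' and then exhibits the witnessing type as $\set{x=a}\cup\set{y=z}\cup(q(y)\restr A)\cup(q(z)\restr A)$ with $a\in A$ and $q$ an $A$-invariant type, which is precisely your $r=\tp(a,b,b/A)$ in disguise. You simply spell out the intermediate computations (notably $p\invext\monster b=\tp(a/\monster b)$ and $q\invext\monster a=q$) that the paper leaves implicit.
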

\begin{proof}
  The first part is clear. It follows that, if $q$ is $A$-invariant and $a\in A$, in order to show that $p(x)\otimes q(y)\equidom q(z)$ it suffices to take as $r$ the type $\set{x=a}\cup \set{y=z}\cup (q(y)\restr A)\cup (q(z)\restr A)$.
\end{proof}

\begin{notation}
When quotienting by  $\domeq$ or $\equidom$ we denote by $\class{p}$ the class of $p$, with the understanding that the equivalence relation we are referring to is clear from context.  We write  $\class{0}$  for the class of realised types.\footnote{It is the class of the unique global $0$-type.} 
\end{notation}

\begin{pr}\label{pr:realneut}
  Suppose that $\otimes$ respects $\doms$ (resp.~$\equidom$). Then:
  \begin{enumerate}
  \item $(\invtilde, \otimes)$ (resp.~$(\invbar, \otimes)$) has neutral element $\class 0$;
  \item no element different from $\class{0}$ is invertible;
  \item \label{point:min} in $\invtilde$, $\class 0$ is the minimum of $\doms$.
  \end{enumerate}
\end{pr}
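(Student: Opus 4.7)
The plan is to dispatch the three parts in turn, exploiting Lemma~\ref{lemma:ignorerealised} for the first part and the preceding lemma characterising realised types via domination for the second.

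For part 1, I would invoke Lemma~\ref{lemma:ignorerealised} directly: for any realised $p$ and any $q \in \invtypes(\monster)$, it gives both $p(x) \otimes q(y) \equidom q(y)$ and, as global types, $p(x) \otimes q(y) = q(y) \otimes p(x)$. Passing to classes modulo $\domeq$ (resp.\ $\equidom$), this yields $\class{0} \otimes \class{q} = \class{q} \otimes \class{0} = \class{q}$ in $\invtilde$ (resp.\ $\invbar$), so $\class{0}$ is a two-sided neutral element.

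For part 2, suppose $\class{p}$ has an inverse $\class{q}$, so that $\class{p} \otimes \class{q} = \class{0}$. Then $p(x) \otimes q(y) \domeq r$ (resp.\ $\equidom r$) for some realised $r$. Applying the lemma immediately preceding Lemma~\ref{lemma:ignorerealised} (the one showing that if one side is realised then $\doms$ forces the other side to be realised too), with $r$ in the role of the realised type, I conclude that $p(x) \otimes q(y)$ is itself realised. But then any $(a,b) \in \monster$ realising $p \otimes q$ has $a \models p$ with $a \in \monster$, so $p$ is realised and $\class{p} = \class{0}$.

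For part 3, I would show directly that $p \doms q$ for every $p \in \invtypes(\monster)$ and every realised $q$. Write $q = \tp(b/\monster)$ with $b \in \monster$, and pick a small $A \ni b$ with $p$ $A$-invariant. Fix any $a \in \monster$ realising $p \restr A$, and let $r \coloneqq \tp(a, b/A) \in S_{xy}(A)$; by construction $r \in S_{pq}(A)$, and since $b \in A$ the formula $y = b$ belongs to $r$ and already entails $q(y)$ (as $q = \tp(b/\monster)$). Thus $p \cup r \proves q$, witnessing $p \doms q$, and $\class 0$ is the $\doms$-minimum of $\invtilde$.

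None of the three steps strikes me as genuinely hard: the preceding lemmas do the main work. The only point to watch is that part 2 must run for both $\domeq$ and $\equidom$ simultaneously, which is automatic because either hypothesis entails the one-sided $\doms$ relation actually needed to invoke the characterisation of realised types.
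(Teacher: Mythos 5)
Your proof is correct and follows essentially the same route as the paper: part 1 via Lemma~\ref{lemma:ignorerealised}, part 2 by reducing to the preceding (unnamed) lemma characterising realised types via domination, and part 3 by exhibiting a witnessing $r$ containing $y=b$. The only cosmetic differences are that you spell out the invocation of the unnamed lemma in part 2 (which the paper leaves implicit in "In particular, $p$ is realised as well") and in part 3 you construct an explicit realisation $a \models p\restr A$, whereas it suffices to take any completion of $(p\restr A)\cup(q\restr A)$ in $S_{xy}(A)$, which automatically contains $y=b$ once $b\in A$.
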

\begin{proof}\*
  \begin{enumerate}
  \item Let $p=\tp(a/\monster)$ and $q\in \invtypes(\monster)$, where $a\in \monster$. Apply Lemma~\ref{lemma:ignorerealised} and note that $p\otimes q\equidom q$ implies $p\otimes q\domeq q$.
  \item By the previous point if $\class p$ is invertible then there is some $q\in\invtypes(\monster)$ such that $p\otimes q$ is realised. In particular, $p$ is realised as well.
   \item We have to show that for every $p(x)$ and every realised $q(y)$ we have $p\doms q$. If $q$ is realised by $b\in \monster$, it is sufficient to put in $r$ the formula $y=b$.\qedhere 
  \end{enumerate}
\end{proof}

\subsection{Some Sufficient Conditions}
We proceed to investigate sufficient  conditions for $\otimes$ to respect $\doms$ and $\equidom$. These conditions are admittedly rather artificial, but we show they are a consequence of other properties that are easier to test directly, such as stability.

In what follows, types will be usually assumed to have no realised coordinates and no duplicate coordinates, i.e.\ we will assume, for all $i<j<\abs x$ and $a\in \monster$, to have $p(x)\proves (x_i\ne a)\land  (x_i\ne x_j)$. Up to domination-equivalence, and even equidominance, no generality is lost, as justified by Lemma~\ref{lemma:ignorerealised} and by the fact that, for example, if $p(x_0)$ is any $1$-type and  $q(y_0,y_1)\proves p(y_0)\cup \set{y_0=y_1}$, setting $x_0=y_0=y_1$ shows $p\equidom q$.

 We usually abuse the notation and indicate e.g.\ $(\invtilde, \otimes, \domd)$ simply with $\invtilde$.

 \begin{defin}\label{defin:statdom}
Let $p,q_0,q_1\in\invtypes(\monster)$ and $r\in S_{q_0q_1}(A)$. Let $\monster_1\satext \monster$ and $b,c\in \monster_1$ be such that $(b,c)\models q_0\cup r\cup q_1$, and let $a\models p(x)\invext \monster_1$. Define \[r[p]\ceq (\tp_{xyz}(abc/A)\cup \set{x=w})\in S_{xyzw}(A)\]
   
 We say that $T$ has \emph{stationary domination} (resp.~\emph{stationary equidominance}) iff  whenever $p,q_0,q_1\in\invtypes(\monster)$ and $q_0\doms q_1$ (resp.~$q_0\equidom q_1$), there are   $A\smallsubset \monster$  and $r\in S_{q_0q_1}(A)$ such that
 \begin{itemize}
 \item $p(x),q_0(y),q_1(z)$ are $A$-invariant,
 \item $q_0\cup r\proves q_1$ (resp.~$q_0\cup r\proves q_1$ and $q_1\cup r\proves q_0$), and
 \item for all $\monster_1\satext \monster$, all $b,c\in \monster_1$ such that $(b,c)\models q_0\cup r$ and all $a\models p(x)\invext \monster_1$, we have \[(p(x)\otimes q_0(y))\cup r[p]\proves p(w)\otimes q_1(z)\] (resp.~$(p(x)\otimes q_0(y))\cup r[p]\proves p(w)\otimes q_1(z)$ and $(p(w)\otimes q_1(z))\cup r[p]\proves p(x)\otimes q_0(y)$).
\end{itemize}
\end{defin} 

\begin{pr}\label{pr:obvwks}
  If $T$ has stationary domination, then $\otimes$ respects $\doms$.   If $T$ has stationary equidominance, then $\otimes$ respects $\equidom$.
\end{pr}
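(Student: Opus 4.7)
The definition of stationary domination has been crafted essentially so that the witness it supplies is directly a witness of the compatibility we want, so the proof will mostly consist of unpacking the definitions and checking that the variable bookkeeping works. The key observation to keep in mind is that if $p$ is $A$-invariant, $b \models q_0$, $(b,c) \models r$ and $a \models p \invext \monster_1$ with $b,c \in \monster_1$, then $\tp(ab/\monster) = p(x) \otimes q_0(y)$ (since $a \models p \invext \monster b$) and $\tp(ac/\monster) = p(x) \otimes q_1(z)$ (since $c \models q_1$ because $q_0 \cup r \proves q_1$, and $a \models p \invext \monster c$). In particular the type $\tp_{xyz}(abc/A)$ does not depend on the choices of $\monster_1$, $b$, $c$, $a$, so $r[p]$ is well-defined.

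Given $p, q_0, q_1 \in \invtypes(\monster)$ with $q_0 \doms q_1$, the plan is to apply the hypothesis of stationary domination to obtain a small $A$ over which $p, q_0, q_1$ are all invariant and a type $r \in S_{q_0q_1}(A)$ satisfying both $q_0 \cup r \proves q_1$ and the implication $(p(x) \otimes q_0(y)) \cup r[p] \proves p(w) \otimes q_1(z)$. I then claim that $r[p]$ itself, viewed as a type in the four tuples of variables $(x,y,z,w)$, is a direct witness of $p(x) \otimes q_0(y) \doms p(w) \otimes q_1(z)$.

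To verify this, I need to check that $r[p] \in S_{(p \otimes q_0)(p \otimes q_1)}(A)$. Projecting $\tp_{xyz}(abc/A)$ onto the $(x,y)$-coordinates gives $\tp(ab/A) = (p(x) \otimes q_0(y)) \restr A$; projecting onto the $(x,z)$-coordinates gives $\tp(ac/A) = (p(x) \otimes q_1(z)) \restr A$, and after the identification $x = w$ that is built into $r[p]$ this becomes $(p(w) \otimes q_1(z)) \restr A$. So $r[p]$ extends both restrictions, as required. The implication $(p(x) \otimes q_0(y)) \cup r[p] \proves p(w) \otimes q_1(z)$ is literally the second clause of stationary domination, so no further work is required. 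This shows $\otimes$ respects $\doms$.

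For the equidominance statement, I would simply replicate the argument: the hypothesis of stationary equidominance supplies the same $r[p]$ witnessing both $(p \otimes q_0) \cup r[p] \proves p \otimes q_1$ and $(p \otimes q_1) \cup r[p] \proves p \otimes q_0$ simultaneously, which is exactly what is needed to conclude $p \otimes q_0 \equidom p \otimes q_1$. There is no real obstacle here; the only thing to be careful about is the renaming of variables and the tacit well-definedness of $r[p]$, both of which are clean from the set-up.
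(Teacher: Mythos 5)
Your proposal is correct and follows essentially the same route as the paper, whose own proof is the single line ``Immediate from the definitions'': you exhibit $r[p]$ as the witness of $p\otimes q_0 \doms p\otimes q_1$, check that it lies in $S_{(p\otimes q_0)(p\otimes q_1)}(A)$ by projecting onto the $xy$- and $wz$-coordinates, and invoke the final clause of Definition~\ref{defin:statdom} for the entailment. One small imprecision: the claim that $\tp_{xyz}(abc/A)$ is independent of the choices because $\tp(ab/\monster)$ and $\tp(ac/\monster)$ are determined does not literally follow (two pairwise types need not determine a triple type); the correct reason is that $\tp(bc/A)=r$ is fixed and $a\models p\invext\monster bc$, so $A$-invariance of $p$ pins down $\tp(abc/A)$. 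This does not affect the argument, since well-definedness of $r[p]$ is in any case not required: the third clause of stationary domination is quantified over \emph{all} admissible choices, so any single one suffices.
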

\begin{proof}
  Immediate from the definitions.
\end{proof}

\begin{defin}
We say that \emph{$q_1$ is algebraic over $q_0$} iff  there are $b\models q_0$ and $c\models q_1$ such that $c\in\acl(\monster b)$.  We say that $T$ has \emph{algebraic domination} iff  $p\doms q$ if and only if $q$ is algebraic over $p$.
\end{defin}
\begin{pr}\label{pr:algdom}
  Suppose that $q_1$ is algebraic over $q_0$.  Then for all $p\in \invtypes(\monster)$ we have $p\otimes q_0\doms p\otimes q_1$, and this is witnessed by a type $r[p]$ as in the definition of stationary domination. In particular, algebraic domination implies stationary domination.
\end{pr}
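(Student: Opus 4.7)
My plan is to take concrete witnesses $b \models q_0$, $c \models q_1$ with $c \in \acl(\monster b)$ (from the hypothesis) inside some $\monster_1 \satext \monster$, together with $a \models p \invext \monster_1$, and then to set $r \ceq \tp(b,c/A)$ and $r[p] \ceq \tp(a,b,c/A) \cup \{x = w\}$ for a small $A$. The desired dominations then reduce to direct type-calculation; the only real work is in enlarging $A$ enough to control the finite algebraic closures that appear.

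To choose $A$, fix a formula $\phi(y,z;m) \in L(\emptyset)$ with $m \in \monster$, $\models \phi(b,c;m)$, and $\phi(b,z;m)$ having only finitely many solutions in $z$. Let $A_0$ be small with $m \in A_0$ and $p, q_0, q_1$ all $A_0$-invariant; in particular $c \in \acl(A_0 b)$. Work in an ambient larger monster $\monster_2 \supseteq \monster_1$. The orbit of $c$ under $\aut(\monster_2/A_0 b)$ is contained in the finite set $\phi(b,\monster_2;m)$ and in general strictly contains the orbit under $\aut(\monster_2/\monster b)$; each excess element $c^\dagger$ has $\tp(c^\dagger/\monster b) \neq \tp(c/\monster b)$, so is distinguished from $c$ by a formula of the form $\theta(z;n,b)$ with $\theta \in L(\emptyset)$ and $n \in \monster$. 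Adjoining to $A_0$ the finitely many such $n$ produces a small $A_1$ for which the $A_1 b$- and $\monster b$-orbits of $c$ agree. A second analogous step with $ab$ in place of $b$ yields a small $A \supseteq A_1$ for which the $Aab$- and $\monster ab$-orbits of $c$ also agree. This finite enlargement is the key step, and it is precisely here that the algebraicity hypothesis is used.

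For the verification, suppose $(a',b',c',w')$ realizes $(p \otimes q_0)(x,y) \cup r[p]$ in some large $\monster_\ast$. Then $w' = a'$, $\tp(a',b'/\monster) = p \otimes q_0 = \tp(a,b/\monster)$, and $\tp(a',b',c'/A) = \tp(a,b,c/A)$. By strong homogeneity pick $\sigma \in \aut(\monster_\ast/\monster)$ with $\sigma(a,b) = (a',b')$, and let $c^\ast \ceq \sigma(c)$; then $\tp(a',b',c^\ast/\monster) = \tp(a,b,c/\monster)$. Combined with the $A$-type equality this gives $\tp(c'/Aa'b') = \tp(c^\ast/Aa'b')$, and since both $c'$ and $c^\ast$ lie in $\acl(Aa'b')$ they lie in the same $\aut(\monster_\ast/Aa'b')$-orbit; transporting the second orbit-equality via $\sigma$ (which fixes $\monster \supseteq A$) upgrades this to being in the same $\aut(\monster_\ast/\monster a'b')$-orbit, whence $\tp(a',b',c'/\monster) = \tp(a,b,c/\monster)$ and in particular $\tp(a',c'/\monster) = p \otimes q_1$. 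Running the same argument with $\tau \in \aut(\monster_\ast/\monster)$ sending $b$ to $b'$ (using only the first orbit-equality) yields $q_0 \cup r \proves q_1$, so $r$ witnesses $q_0 \doms q_1$. The final ``in particular'' is then immediate, since under algebraic domination $q_0 \doms q_1$ already means $q_1$ is algebraic over $q_0$.
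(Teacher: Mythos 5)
Your proof is correct, but it takes a genuinely different route from the paper's. The paper's argument is more syntactic: it takes an $L(A)$-formula $\psi(y,z)$ such that $\psi(b,z)$ isolates $\tp(c/\monster b)$ (enlarging $A$ only enough to contain the parameters of $\psi$), works with the small partial witness $s := \set{x = w}\cup\set{\psi(y,z)} \subseteq r[p]$, and for each target $\phi(w,z)\in p(w)\otimes q_1(z)$ uses $\monster$-invariance of $p\invext\monster_1$ to see that $\phi(x,\tilde c)\in p\invext\monster_1$ holds for every $\tilde c$ in the finite orbit of $c$ over $\monster b$, whence $\forall z\,\bigl(\psi(b,z)\to\phi(x,z)\bigr)\in p\invext\monster b$ and the entailment follows. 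Since $s\subseteq r[p]$ for \emph{every} choice of realisations $(a,b,c)$, the paper never has to worry about whether $r[p]$ depends on those choices. Your proof is instead orbit-theoretic: you fix a single triple $(a,b,c)$, perform two finite enlargements of $A$ (exploiting the finiteness of the orbit of $c$) so that $\tp(c/Ab)$ isolates $\tp(c/\monster b)$ and $\tp(c/Aab)$ isolates $\tp(c/\monster ab)$, and then run an automorphism-chase to show that any realisation of $(p\otimes q_0)\cup r[p]$ has the correct type over $\monster$. Both arguments ultimately rest on the same finiteness phenomenon; the paper's is somewhat shorter and a little more ``local'' (one formula at a time), whereas yours has the small stylistic advantage of deducing the clause $q_0\cup r\proves q_1$ and the $r[p]$-entailment by one and the same mechanism. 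One small point that you should make explicit: the definition of stationary domination quantifies over all $\monster_1$, $b$, $c$, $a$, with $r[p]$ recomputed from those data, but you only verify the entailment for the triple you fixed. This does suffice, because $r[p]=\tp(abc/A)\cup\set{x=w}$ is independent of the choices --- $\tp(bc/A)=r$ always, and $\tp(a/Abc)$ is then determined by $r$ through the $A$-invariance of $p\invext\monster_1$ --- but since you do not use a canonical subset of $r[p]$ as the paper does with $s$, it would be cleaner to note this.
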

\begin{proof}
  Let $b,c\in\monster_1\satext \monster$ witness algebraicity of $q_1$ over $q_0$. Suppose $\psi(y,z)$ is an  $L(\monster)$-formula such that $\psi(b,z)$ isolates $\tp(c/\monster b)$, and let $s\coloneqq\set{x=w}\cup \set{\psi(y,z)}$. If $A$ is such that $p\in \invtypes(\monster, A)$ and $\psi(y,z)\in L(A)$, let $r\coloneqq q_0(y)\restr A\cup \set{\psi(y,z)}$. Then  $s\subseteq r[p]$, so it is enough to show that  $p(x)\otimes q_0(y)\cup s\proves p(w)\otimes q_1(z)$. In some $\monster_2\satext \monster_1$, let $a\models p\invext \monster_1$ and let $\phi(w, z)\in p(w)\otimes q_1(z)$. This means that $\phi(w,z)\in L(\monster)$ and $\phi(w, c)\in \tp(a/\monster c)=p\invext \monster c$.

  By hypothesis, there are only finitely many  $\tilde c\equiv_{\monster b} c$, which must be contained in any model containing $\monster b$ and, by invariance of $p\invext \monster_1$, for all such $\tilde c\in \monster_1$ we have  $p\invext \monster_1\proves \phi(x, \tilde c)$.    It follows that $\tp(a/\monster_2)\proves \forall z\; \bigl(\psi(b,z)\implica \phi(w,z)\bigr)$.
As the latter is an $L(\monster b)$-formula, it is contained in $p\invext \monster b$, and it follows that $(p(x)\otimes q_0(y))\cup s\proves p(w)\otimes q_1(z)$. 
\end{proof}
\begin{co}\label{co:pushforward}
  Suppose that $q_1$ is the pushforward $f_*(q_0)$ of $q_0$, for some definable function $f$. Then, for all $p\in \invtypes(\monster)$, we have $p\otimes q_0\doms p\otimes q_1$.
\end{co}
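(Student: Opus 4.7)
The plan is to reduce this directly to Proposition~\ref{pr:algdom} by noting that a pushforward along a definable function is a particularly simple instance of algebraic domination: the ``algebraic'' realization is in fact in the definable closure.

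More precisely, suppose $f$ is a definable function, given by an $L(\monster)$-formula $\psi(y,z)$ expressing $z = f(y)$ (we absorb any parameters of $f$ into the small base set $A$). Work inside some $\monster_1 \satext \monster$, pick $b \models q_0$, and set $c \ceq f(b)$. By the very definition of the pushforward we have $c \models f_*(q_0) = q_1$. On the other hand $\psi(b,z)$ isolates $\tp(c/\monster b)$ in the strongest possible way, namely it has the unique realization $c$, so $c \in \dcl(\monster b) \subseteq \acl(\monster b)$. This is exactly the assertion that $q_1$ is algebraic over $q_0$ in the sense of the definition preceding Proposition~\ref{pr:algdom}.

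Proposition~\ref{pr:algdom} then yields $p \otimes q_0 \doms p \otimes q_1$ for every $p \in \invtypes(\monster)$, with a witnessing small type of the form $r[p]$ built from the isolating formula $\psi$. There is no real obstacle here; the only thing worth keeping track of is the bookkeeping for parameters, i.e.\ choosing $A$ large enough to contain the parameters defining $f$ together with bases of invariance for $p$ and $q_0$, which is harmless since $A$ remains small.
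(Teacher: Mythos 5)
Your proof is correct and follows exactly the route the paper intends: the corollary is stated immediately after Proposition~\ref{pr:algdom} precisely because a pushforward realization $c = f(b)$ lies in $\dcl(\monster b) \subseteq \acl(\monster b)$, so $q_1$ is algebraic over $q_0$ and the proposition applies directly.
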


  \begin{pr}\label{pr:stabstdom}
    Let $T$ be stable. Then $T$ has stationary domination and stationary equidominance. Moreover $\invtilde$ and $\invbar$ are commutative.
  \end{pr}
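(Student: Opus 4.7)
The plan is to reduce everything to the classical theory of nonforking in stable theories, exploiting symmetry of forking and stationarity of types over models. First I would pick a small model $M \smallsubset \monster$ such that $p$, $q_0$, $q_1$ are the unique nonforking extensions to $\monster$ of their restrictions to $M$, and such that some witness $r$ of $q_0 \doms q_1$ lies in $S_{q_0 q_1}(M)$. Invoking Proposition~\ref{pr:sameasclassic}, I can take $r = \tp(b, c/M)$ where $b$ \emph{classically} dominates $c$ over $M$, i.e.\ $b \find{M} B \then c \find{M} B$ for every parameter set $B$. This property of $\tp(b, c/M)$ is invariant under $M$-automorphisms and therefore transfers to every realization $(b^*, c^*) \equiv_M (b, c)$.

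Given $\monster_1 \satext \monster$, $(b, c) \in \monster_1$ with $(b, c) \models q_0 \cup r$, and $a \models p \invext \monster_1$, I set $r[p] = \tp(a, b, c/M) \cup \set{x = w}$ and pick any $(a^*, b^*, c^*, w^*) \models (p(x) \otimes q_0(y)) \cup r[p]$. From $(a^*, b^*) \models p \otimes q_0$ one has $a^* \find{M} \monster b^*$, and symmetry yields $b^* \find{M} \monster a^*$. Since $(b^*, c^*) \equiv_M (b, c)$, the classical domination property gives $c^* \find{M} \monster a^*$, which decomposes into $c^* \find{M a^*} \monster$; combined with $a^* \find{M} \monster$ this produces $(a^*, c^*) \find{M} \monster$. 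Now $\tp(a^*, c^*/M) = \tp(a, c/M)$, and in the original choice $a \find{M} \monster_1 \ni c$ together with $a \models p$ and $c \models q_1$ gives $(a, c) \models p \otimes q_1$, whence $\tp(a, c/M) = (p \otimes q_1) \restr M$. Stationarity of $p \otimes q_1$ over $M$ then forces $\tp(a^*, c^*/\monster) = p \otimes q_1$, i.e.\ $(w^*, c^*) \models p(w) \otimes q_1(z)$. This proves stationary domination; the same argument run symmetrically in the equidominance case, where the classical witness additionally has $c$ dominating $b$, yields stationary equidominance.

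Commutativity of $\invtilde$ and $\invbar$ falls out immediately from symmetry of nonforking: for $M$-invariant $p, q$ and $a \models p$, $b \models q$ with $a \find{M} \monster b$ (equivalently $b \find{M} \monster a$), the single global type $\tp(a, b/\monster)$ equals both $p(x) \otimes q(y)$ and the reordering of $q(y) \otimes p(x)$ into the variable sequence $(x, y)$, so these types literally coincide and a fortiori descend to equal $\domeq$- and $\equidom$-classes. The main obstacle is the joint-nonforking step $(a^*, c^*) \find{M} \monster$, which does not follow from the marginal statements $a^* \find{M} \monster$ and $c^* \find{M} \monster$ alone; it is precisely at this point that the classical characterization of domination in terms of forking (Proposition~\ref{pr:sameasclassic}), rather than the weaker semi-isolation formulation of Definition~\ref{defin:domination}, becomes indispensable.
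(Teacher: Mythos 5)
Your argument is correct but follows a genuinely different route from the paper. The paper's proof is a two-line reduction: apply Lemma~\ref{lemma:atleastontheleft} with $p_0 = q_0$, $p_1 = q_1$ and $q = p$, yielding $(q_0(y)\otimes p(x))\cup r\cup\set{x=w}\proves q_1(z)\otimes p(w)$, then invoke commutativity of $\otimes$ in stable theories to rewrite both sides as $p\otimes q_i$; since $r\cup\set{x=w}\subseteq r[p]$, stationary (equi)domination follows for \emph{any} witness $r$. You instead reprove the key entailment directly from the forking calculus: you replace $r$ by a classical domination witness via Proposition~\ref{pr:sameasclassic}, then use symmetry, transitivity and stationarity over a model to conclude $(a^*,c^*)\find{M}\monster$ for an arbitrary realisation of $(p\otimes q_0)\cup r[p]$, whence the conclusion by uniqueness of the nonforking extension. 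Both arguments are sound. The paper's is more economical and shows the result follows formally from commutativity together with the already-established Lemma~\ref{lemma:atleastontheleft}, without any appeal to the forking characterisation of domination; yours is more explicit about the stability-theoretic mechanism and makes visible exactly where symmetry of forking and stationarity over models enter. One caveat: you call the forking characterisation \emph{indispensable} at the joint-nonforking step, but the paper's proof circumvents Proposition~\ref{pr:sameasclassic} entirely, so that indispensability is a feature of your chosen route rather than intrinsic to the result. Your commutativity argument is fine, though it essentially reproves the standard fact that $\otimes$ commutes in stable theories, which the paper simply cites.
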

  \begin{proof}
Let $r$  witness $q_0(y)\doms q_1(z)$.    By Lemma~\ref{lemma:atleastontheleft} $(q_0(y)\otimes p(x))\cup r(y,z)\cup\set{x=w}\proves q_1(z)\otimes p(w)$. As $r\cup \set{x=w}\subseteq r[p]$ and $T$ is stable if and only if $\otimes$ is commutative, we have stationary domination and commutativity of $\invtilde$. For stationary equidominance and commutativity of $\invbar$, argue analogously starting with any $r$ witnessing $q_0(y)\equidom q_1(z)$.
  \end{proof}
  \begin{defin}
$T$ is \emph{weakly binary} iff whenever $a,b$ are tuples from some $\monster_1\succ \monster$ and $\tp(a/\monster)$ and $\tp(b/\monster)$ are invariant there is $A\smallsubset \monster$ such that
    \begin{equation}\label{eq:weakbin}
    \tp(a/\monster)\cup \tp(b/\monster)\cup \tp(a,b/A)\proves \tp(a,b/\monster)
  \end{equation}
\end{defin} 
\begin{lemma}\label{lemma:stillinvariant}
 If $T$ is weakly binary and $\tp(a/\monster)$, $\tp(b/\monster)$ are both invariant, then so is $\tp(ab/\monster)$.
\end{lemma}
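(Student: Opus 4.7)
The plan is to exhibit a small set $A'\smallsubset\monster$ over which $\tp(ab/\monster)$ is invariant, by a direct automorphism-chasing argument that uses weak binarity as the ``glue''.

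First, pick small sets $A_a, A_b\smallsubset\monster$ witnessing that $\tp(a/\monster)$ is $A_a$-invariant and $\tp(b/\monster)$ is $A_b$-invariant. Apply the weak binarity hypothesis to the tuple $(a,b)$ to obtain a small $A_0\smallsubset\monster$ such that
\[
\tp(a/\monster)\cup\tp(b/\monster)\cup\tp(a,b/A_0)\proves\tp(a,b/\monster).
\]
Set $A'\coloneqq A_0\cup A_a\cup A_b$, still small. My claim is that $\tp(ab/\monster)$ is $A'$-invariant.

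To verify this, I take an arbitrary $f\in\aut(\monster/A')$ and show $f$ fixes $\tp(ab/\monster)$. Work inside a sufficiently saturated and strongly homogeneous $\monster_2\succ\monster_1$ so that $f$ lifts to some $\tilde f\in\aut(\monster_2)$; such a lift exists because $A'$ is small and $\monster_2$ is strongly homogeneous over $\monster$. Then $\tilde f(a)\equiv_{\monster}a$ and $\tilde f(b)\equiv_{\monster}b$: indeed $\tilde f$ fixes $A_a\ssq A'$ pointwise, so by $A_a$-invariance of $\tp(a/\monster)$, every $\monster$-formula in $\tp(a/\monster)$ is sent by $\tilde f$ to a formula still in $\tp(a/\monster)$, and symmetrically for $b$. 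Moreover $\tilde f$ fixes $A_0$ pointwise, so $\tp(\tilde f(a),\tilde f(b)/A_0)=\tp(a,b/A_0)$. Thus the tuple $(\tilde f(a),\tilde f(b))$ satisfies all three hypotheses on the left of the implication displayed above, hence it realises $\tp(a,b/\monster)$.

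It remains to convert this into invariance of $\tp(ab/\monster)$. Given any $\phi(x,y;z)\in L(\emptyset)$ and any $m,m'\in\monster$ with $m\equiv_{A'} m'$, realise $m\equiv_{A'}m'$ by some $f\in\aut(\monster/A')$ with $f(m)=m'$ and lift as above to $\tilde f$; then $\phi(a,b;m)$ holds iff $\phi(\tilde f(a),\tilde f(b); m')$ holds (apply $\tilde f$), iff $\phi(a,b;m')$ holds (because $(\tilde f(a),\tilde f(b))\models\tp(a,b/\monster)$). Hence $\tp(ab/\monster)$ is $A'$-invariant, as required. No step here is really an obstacle: the only subtlety is making sure one picks $A'$ to simultaneously absorb the parameter sets for both invariances and for the weak binarity witness, after which everything is a routine chase.
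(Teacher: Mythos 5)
Your proof is correct and is essentially the paper's argument spelled out in full: the paper simply notes that the left-hand side of the weak binarity implication is fixed (as a set of formulas) by $\aut(\monster/A')$ and hence so is the unique complete type it entails, whereas you unfold that one-line observation into an explicit automorphism chase. Same idea, more detail.
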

\begin{proof}
If~\eqref{eq:weakbin} holds and  $\tp(a/\monster)$ and $\tp(b/\monster)$ are $B$-invariant then the left-hand side of~\eqref{eq:weakbin} is fixed by $\aut(\monster/AB)$. As  $\tp(a,b/\monster)$ is complete, it is $AB$-invariant.
\end{proof}
\begin{eg}
  Every binary theory $T$, i.e.\ where every formula is equivalent modulo $T$ to a Boolean combination of formulas with at most two free variables, is weakly binary. This follows from the fact that $T$ is binary if and only if for any $B$ and tuples $a,b$
  \[
    \tp(a/B)\cup \tp(b/B)\cup \tp(ab/\emptyset)\proves \tp(ab/B)
  \]
  An example of a weakly binary theory which is not binary is the theory of a dense circular order, or any other non-binary theory that becomes binary after naming some constants.   A weakly binary theory which does not become binary after adding constants can be obtained by considering a structure $(M, E, R)$ where $E$ is an equivalence relation with infinitely many classes, on each class $R(x,y,z)$ is a circular order, and $R(x,y,z)\implica E(x,y)\land E(x,z)$.
 The generic $3$-hypergraph and $\textsf{ACF}_0$ are not weakly binary.
\end{eg}

We thank Jan Dobrowolski for pointing out the relationship between binarity and weak binarity,  therefore also implicitly suggesting a name for the latter.
\begin{lemma}
$T$ is weakly binary if and only if for every $n\ge 2$ we have the  following.  If $a^0,\ldots,a^{n-1}$ are such that for all $i<n$ we have $\tp(a^i/\monster)\in \invtypes(\monster)$, then there is $A\smallsubset \monster$ such that 

\begin{equation}
\Bigl(\bigcup_{i=0}^{n-1}    \tp(a^i/\monster)\Bigr)\cup \tp(a^0,\ldots, a^{n-1}/A)\proves \tp(a^0,\ldots,a^{n-1}/\monster)\label{eq:wbinn}
\end{equation}
\end{lemma}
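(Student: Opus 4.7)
The plan is to dispatch the ``only if'' direction immediately by specialising to $n=2$, which is exactly the definition of weak binarity, and to handle the ``if'' direction by induction on $n\ge 2$.

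For the inductive step, I would first use Lemma~\ref{lemma:stillinvariant} to secure the invariance of the combined type. Concretely, given $a^0,\ldots,a^{n-1}$ with $\tp(a^i/\monster)\in\invtypes(\monster)$ for each $i<n$, iterating Lemma~\ref{lemma:stillinvariant} (combining $a^0$ with $a^1$, then with $a^2$, and so on) yields that $\tp(a^0,\ldots,a^{n-2}/\monster)$ is itself invariant. At this point I can legitimately apply the definition of weak binarity to the pair of tuples $a\coloneqq(a^0,\ldots,a^{n-2})$ and $b\coloneqq a^{n-1}$, obtaining some small $A_1\smallsubset\monster$ such that
\[
\tp(a^0,\ldots,a^{n-2}/\monster)\cup\tp(a^{n-1}/\monster)\cup\tp(a^0,\ldots,a^{n-1}/A_1)\proves\tp(a^0,\ldots,a^{n-1}/\monster).
\]

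Next I would invoke the inductive hypothesis at stage $n-1$ on the tuples $a^0,\ldots,a^{n-2}$, producing a small $A_2\smallsubset\monster$ such that
\[
\Bigl(\bigcup_{i=0}^{n-2}\tp(a^i/\monster)\Bigr)\cup\tp(a^0,\ldots,a^{n-2}/A_2)\proves\tp(a^0,\ldots,a^{n-2}/\monster).
\]
Setting $A\coloneqq A_1\cup A_2$, which is still small, the restriction map sends $\tp(a^0,\ldots,a^{n-1}/A)$ onto types extending both $\tp(a^0,\ldots,a^{n-2}/A_2)$ and $\tp(a^0,\ldots,a^{n-1}/A_1)$. Chaining the two displayed implications therefore gives~\eqref{eq:wbinn}, closing the induction.

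I do not anticipate a genuine obstacle here: the only subtlety is that the definition of weak binarity requires both constituent types to be invariant, which is precisely why Lemma~\ref{lemma:stillinvariant} (and its iteration) has to be invoked before applying the hypothesis to $(a,b)$. Everything else is bookkeeping on small parameter sets.
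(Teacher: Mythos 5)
Your argument is correct and follows essentially the same route as the paper's: iterate Lemma~\ref{lemma:stillinvariant} to secure invariance of the combined type, then apply the $n=2$ definition to a grouping of the tuples and chain with the inductive hypothesis, absorbing the two small parameter sets into one (the paper shows only the case $n=3$ explicitly, grouping the last two tuples rather than the first $n-1$, but that is a cosmetic difference). One small slip in the exposition: given the statement ``$T$ is weakly binary if and only if \eqref{eq:wbinn} holds for all $n\ge 2$'', the direction dispatched by specialising to $n=2$ is the ``if'' direction, while the induction proves the ``only if'' direction -- you have the two labels swapped, though the mathematical content is unaffected.
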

\begin{proof}
For the nontrivial direction, assume $T$ is weakly binary. For notational simplicity we will only show the case $n=3$, and leave the easy induction to the reader.  Let  $a,b,c$  be tuples with invariant global type. By Lemma~\ref{lemma:stillinvariant} $\tp(bc/\monster)$ is still invariant, so we can let $A$ witness weak binarity for $b,c$ and for $a,bc$ simultaneously, where $bc$ is considered now as a single tuple. Then  $\tp(b/\monster)\cup \tp(c/\monster)\cup \tp(a,b,c/A)\proves \tp(b,c/\monster)$, and by applying weak binarity to $a, bc$ we get
  \begin{align*}
    &\phantom{\proves}\tp(a/\monster)\cup \tp(b/\monster)\cup \tp(c/\monster)\cup \tp(a,b,c/A)\\
    &\proves \tp(a/\monster)\cup \tp(bc/\monster)\cup \tp(a,bc/A)\\
    &\proves \tp(a,bc/\monster)\qedhere
  \end{align*}
\end{proof}

\begin{co}\label{co:binstat}
  Every weakly binary theory has stationary domination and stationary equidominance.
\end{co}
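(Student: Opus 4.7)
The plan is to deduce both stationarity conditions simultaneously from the $n = 3$ version of the weak-binarity identity just established, applied to a tuple $(a, b, c)$ realising $p$, $q_0$, $q_1$ in order.

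Starting from $r_0 \in S_{q_0 q_1}(A_0)$ witnessing $q_0 \doms q_1$ (resp.\ $q_0 \equidom q_1$), with $A_0$ chosen large enough that $p$, $q_0$, $q_1$ are all $A_0$-invariant, I would pass to some $\monster_1 \satext \monster$ and pick $(b, c) \models q_0 \cup r_0$ (so automatically $c \models q_1$) together with $a \models p \invext \monster_1$. The global types $\tp(a/\monster)$, $\tp(b/\monster)$, $\tp(c/\monster)$ are then invariant, so the previous lemma yields a small $A \supseteq A_0$ with
\[
  \tp(a/\monster) \cup \tp(b/\monster) \cup \tp(c/\monster) \cup \tp(a, b, c/A) \proves \tp(a, b, c/\monster).
\]
My candidate witness is $r \ceq \tp_{yz}(b, c/A)$, which completes $r_0$ over $A$ and hence still witnesses $q_0 \doms q_1$ (resp.\ $\equidom$).

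To verify the stationarity clause I would first observe that $r[p]$ depends only on $p, r, A$ and not on the chosen realisations: since $a \models p \invext \monster_1 \supseteq p \invext Abc$, one has $\tp(a, b, c/A) = (p \invext Abc) \cup r$, a set of formulas manifestly determined by $p, r, A$. The domination half then falls straight out of the displayed implication: $\tp(a/\monster) \cup \tp(b/\monster) \subseteq p(x) \otimes q_0(y) = \tp(a, b/\monster)$, while $\tp(c/\monster) = q_1$ is itself a consequence of $q_0 \cup r$; so $(p(x) \otimes q_0(y)) \cup r[p]$ proves $\tp(a, b, c/\monster)$, which in particular proves $\tp(a, c/\monster) = p(x) \otimes q_1(z)$, whence $p(w) \otimes q_1(z)$ using $\{x = w\} \subseteq r[p]$. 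For stationary equidominance the same argument runs symmetrically, this time using $q_1 \cup r \proves q_0$ to recover $\tp(b/\monster)$ on the other side.

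The only point requiring care is the well-definedness of $r[p]$, i.e.\ that $\tp(a, b, c/A)$ really is independent of the specific choice of $a, b, c$; beyond that, everything is bookkeeping around the $n = 3$ weak-binarity identity.
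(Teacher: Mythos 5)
Your proposal is correct and takes essentially the same approach as the paper: both invoke the $n=3$ case of the weak-binarity implication for $(a,b,c)$ realising $p\invext\monster_1$, $q_0$, $q_1$, and then chase the implications to deduce $\tp(a,b,c/\monster)$ from $(p\otimes q_0)\cup r[p]$. Your added care about the well-definedness of $r[p]$ and the explicit replacement of $r_0$ by its completion $\tp_{yz}(b,c/A)$ over the enlarged base $A$ are minor points the paper glosses over, but they are correctly handled and do not change the argument.
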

\begin{proof}
  Let $p(x),q_0(y), q_1(z)$ be $A_0$-invariant and $r\in S_{q_0q_1}(A_0)$ be such that $q_0\cup r\proves q_1$.
  In some $\monster_1\satext \monster$ choose $(b,c)\models q_0 \cup r$, then choose $a\models p\invext \monster_1$. By the case $n=3$ of~\eqref{eq:wbinn} there is some $A\smallsubset \monster$, which without loss of generality includes $A_0$, such that
  \begin{equation}
    \tp(a/\monster)\cup \tp(b/\monster)\cup \tp(c/\monster)\cup\tp(abc/A)\proves \tp(abc/\monster)\label{eq:wbimsd}
  \end{equation}
  Let $r[p]\ceq \tp_{xyz}(abc/A)\cup \set{x=w}$ and note that $r\subseteq r[p]$. Therefore  $(p\otimes q_0)\cup r[p]\proves q_0\cup r\proves q_1=\tp(c/\monster)$. Combining this with~\eqref{eq:wbimsd}, and observing that $\tp(ab/\monster)=p\otimes q_0$, that $\tp(ac/\monster)=p\otimes q_1$ and that $r[p]\proves x=w$, we have 
  \begin{align*}
    &\phantom{\proves}\bigl(p(x)\otimes q_0(y)\bigr)\cup r[p]\\
    &\proves \bigl(p(x)\otimes q_0(y)\bigr)\cup r[p]\cup q_1(z)\cup \set{x=w}\\
    &\proves  \tp_x(a/\monster)\cup \tp_y(b/\monster)\cup \tp_z(c/\monster) \cup\tp_{xyz}(abc/A)\cup\set{x=w}\\
    &\proves \tp_{wz}(ac/\monster)=p(w)\otimes q_1(z)
  \end{align*}
  This proves stationary domination. For stationary equidominance, start with an $r$ witnessing $q_0\equidom q_1$ and prove analogously that in addition $(p(w)\otimes q_1(z))\cup r[p]\proves p(x)\otimes q_0(y)$.
\end{proof}

  We now give some examples of $(\invtilde, \otimes, \doms)$. These characterisations can be proven with easy  ad hoc arguments but, as such computations are made almost immediate by results like Proposition~\ref{pr:wortpreserved} or Theorem~\ref{thm:bigoplusN}, we state them without proof. We postpone the investigation of further examples to a future paper.
  \begin{eg}\label{eg:strminN}
    If $T$ is a strongly minimal theory (see Example~\ref{eg:deq}), then $(\invtilde, \otimes, \doms)\cong (\mathbb N, +, \ge)$.
  \end{eg}

\begin{eg}
  Let $T$ be the theory of an equivalence relation $E$ with infinitely many classes, all of which are infinite. Since $T$ is $\omega$-stable, by Proposition~\ref{pr:stabstdom} and Proposition~\ref{pr:obvwks} $\otimes$ respects $\doms$, and moreover  by~\cite[Theorem~14.2]{poizat} for every $\kappa$ there is a $\kappa$-saturated $\monster\models T$ of size $\kappa$. For such $\monster$ we have  $(\invtilde, \otimes, \doms)\cong \bigoplus_{\kappa} \mathbb N$, where each copy of $\mathbb N$ is equipped  with the usual $+$ and $\ge$,  and $\oplus$ is the direct sum of ordered monoids.

  To spell this out and give a little extra information on $\invtilde$ for $T$, fix a choice of representatives $\seq{b_i\mid 0<i<\kappa}$ for $\monster/E$ and let  $\pi_E\from \monster\to \monster/E$ be the  projection to the quotient. Then an element $\class p\in\invtilde$ corresponds to a $\kappa$-sequence $(n_i)_{i<\kappa}$ of natural numbers with finite support where,   for any $c\models p$,   $n_0=\abs{\pi_E c\setminus \pi_E \monster}$ and, for positive $i$, $n_i=\abs{\set{c_j\in c\mid E(c_j, b_i)}\setminus E(\monster, b_i)}$, i.e.\  $n_0$ counts the new equivalence classes represented in $p$ and, when $i$ is positive, $n_i$ counts the number of new points in the equivalence class of $b_i$. Addition is done componentwise and $(n_i)_{i<\kappa}\le (m_i)_{i<\kappa}$ iff $\forall i<\kappa\; n_i\le m_i$. 
\end{eg}
  As we will see in Section~\ref{section:stabletheories}, the fact that $\invtilde$ has the previous forms follows from the stability-theoretic properties of the theories above: Theorem~\ref{thm:bigoplusN} applies to  both and, in the case of Example~\ref{eg:strminN},   Corollary~\ref{co:unidim} tells us directly that $\invtilde\cong \mathbb N$.
\begin{eg}\label{eg:dlo}
As \textsf{DLO} is binary, $\otimes$ respects $\doms$. We have already seen an example of two domination-equivalent types in this theory in  Example~\ref{eg:deq}. To describe $\invtilde$, call a cut in   $\monster$ \emph{invariant} iff it has small cofinality on exactly one side, and let $\operatorname{IC}\monster$ be the set of all such. The domination-equivalence class of an invariant type in \textsf{DLO} is determined by the (necessarily invariant) cuts in which it concentrates  and, writing $\pfin(X)$ for the set of finite subsets of $X$, we have  \mbox{$(\invtilde, \otimes, \doms)\cong (\pfin(\operatorname{IC}\monster), \cup, \supseteq)$}. 
\end{eg}

\section{Counterexamples}\label{section:cntrex}
In~\cite[p.~18]{hhm} it was claimed without proof that  $\invbar$ is well-defined and commutative in every first-order theory. This section contains counterexamples to the statements above.
        \subsection{Well-Definedness}\label{subsec:wdbreaks}
This subsection is dedicated to the proof of the following result.
\begin{thm}\label{thm:cntrexwdsu2}
  There is a ternary, $\omega$-categorical, supersimple theory of SU-rank $2$ with degenerate algebraic closure in which neither $\domeq$ nor $\equidom$ are congruences with respect to $\otimes$.
\end{thm}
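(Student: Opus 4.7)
The plan is to construct an explicit Fra\"{\i}ss\'e class $\mathcal K$ in a finite relational language containing a ternary symbol, verify that its Fra\"{\i}ss\'e limit has the stated properties, and exhibit three global invariant $1$-types $p(x), q_0(y), q_1(z)$ over $\monster$ with $q_0 \equidom q_1$ (hence also $q_0 \domeq q_1$) but $p \otimes q_0 \ndoms p \otimes q_1$.

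For the model-theoretic setup I would take $\mathcal K$ to consist of finite structures in a language with a binary symbol (or some auxiliary structure) together with the ternary symbol, closed under substructure and admitting free amalgamation over $\emptyset$. Free amalgamation yields $\omega$-categoricity via Fra\"{\i}ss\'e's theorem, degenerate $\acl$ since no new points are algebraically forced, and supersimplicity together with the independence theorem by the standard Kim--Pillay argument for free-amalgamation classes. The relations will be tuned so that SU-rank is exactly $2$: one layer of genericity coming from the auxiliary structure and a second from the ternary symbol. Verifying that the rank is neither $1$ nor $3$ is routine but requires some care in fine-tuning.

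For the counterexample types, $q_0$ and $q_1$ should be chosen with identical unary and binary behaviour over $\monster$ but differing ternary patterns. A small type $r \in S_{q_0 q_1}(A)$ identifying a binary configuration of realizations $(b, c)$ then witnesses $q_0 \equidom q_1$, since both $q_0 \cup r \proves q_1$ and $q_1 \cup r \proves q_0$ reduce to the symmetric binary data. The type $p$ is chosen so that, for $a \models p$ independent of $(b, c)$ over $\monster$, a specific ternary formula $R(a, b, d)$ with $d \in \monster$ is forced one way in $p(x) \otimes q_0(y)$ and the other way in $p(w) \otimes q_1(z)$; this asymmetry is possible precisely because $q_0$ and $q_1$ differ only at the ternary level.

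The main obstacle is to rule out \emph{every} small candidate $r'$ witnessing $p \otimes q_0 \doms p \otimes q_1$. The argument is by contradiction: given such $r'$ over small $A'$, I would use $\omega$-categoricity and the freeness of the amalgamation to construct, in an ambient monster, two realizations of $(p \otimes q_0) \cup r'$ that disagree on the distinguishing ternary formula, contradicting that $r'$ entails it. The crucial combinatorial input is that the ternary relation on a genuinely fresh triple is not pinned down over any small parameter set — the same phenomenon that makes the generic $3$-hypergraph fail to be weakly binary (Section~\ref{section:defwd}), now deployed in an SU-rank-$2$ setting. Running the analogous argument with a witness of equidominance in place of domination rules out $p \otimes q_0 \equidom p \otimes q_1$ and completes the proof.
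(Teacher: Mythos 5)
Your high-level strategy matches the paper's: a Fra\"iss\'e class with free amalgamation in a finite relational language with a ternary symbol, $\omega$-categoricity and degenerate $\acl$ from freeness, supersimplicity via Kim--Pillay, SU-rank $2$ from layering a binary generic structure with a ternary one, and a genericity argument ruling out every small witness $r'$. The paper's theory $T$ does exactly this: $L = \{E, R_2, R_3\}$ with $E$ an equivalence relation, $R_2$ a random graph on the $E$-quotient, and $R_3$ a tripartite generic $3$-hypergraph on suitable triples.

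However, there is a genuine gap in the description of the counterexample types. You propose taking $q_0, q_1$ to be global invariant $1$-types ``with identical unary and binary behaviour over $\monster$ but differing ternary patterns''. In the relevant kind of theory this is not available: once the $R_2$-data of a $1$-type over $\monster$ is fixed, the $R_3$-data is typically pinned down too (in the paper's $T$, any $1$-type with no $R_2$-edges forces $\neg R_3$ everywhere, since $R_3$ can only hold on configurations with exactly two $R_2$-edges), so two such $1$-types would coincide. Moreover, if $q_0 \neq q_1$ are $1$-types differing at a $\monster$-parameter formula, a small $r$ identifying $y$ with $z$ cannot witness $q_0 \equidom q_1$, and it is not clear how else equidominance would be arranged.

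The missing idea is to make $q_1$ a type in \emph{more} variables than $q_0$, with the extra coordinate controlled over a small set. In the paper, $q_0(y)$ is the $1$-type with no $R_2$-edges, and $q_1(z_0,z_1)$ adds a second coordinate constrained by $E(z_0,z_1) \land z_0 \ne z_1$; the witness $r$ for $q_0 \equidom q_1$ identifies $y$ with $z_0$ and uses the formula $E(z_0,z_1) \land z_0 \ne z_1$ to determine $z_1$ up to the remaining genericity. Then $p \otimes q_1$ contains $\{\neg R_3(w, z_1, a) \mid a \in \monster\}$, a constraint on the triple $(w, z_1, a)$ that does not appear in $p \otimes q_0$ at all, and no small $r'$ can force it because $R_3$ is generic on such triples. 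This is also why the ``same formula forced opposite ways in $p\otimes q_0$ versus $p\otimes q_1$'' picture in your proposal is not what happens: the two products live on different variable tuples, and the obstruction is an unforceable new constraint, not a disagreement on a shared formula.

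Your genericity argument for ruling out small $r'$ is the right move and would go through once the types are set up correctly, but without the extra coordinate on $q_1$ (or some equivalent device) the equidominance $q_0 \equidom q_1$ you need as input does not get off the ground.
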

In Proposition~\ref{pr:descriptionofT}, we present the promised theory  as a Fra\"iss\'e limit (see \cite[Theorem~7.1.2]{hodges}) and provide an explicit axiomatisation. We then show in Proposition~\ref{pr:dombreaks} that in this theory $\otimes$ does not respect $\doms$, nor $\equidom$.

Denote by $S_3$ the group of permutations of $\set{0,1,2}$. 
\begin{defin}
  Let $L$ be the relational language $L\coloneqq\set{E^{(2)}, R_2^{(2)},  R_3^{(3)}}$, where  arities of symbols are indicated as superscripts, and define $\Lambda\coloneqq \Lambda_0\land \Lambda_1$, where 
  \begin{gather*}
    \Lambda_0(x_0, x_1, x_2)\coloneqq\bigvee_{\sigma\in S_3}\bigl(R_2( x_{\sigma0}, x_{\sigma1})\land R_2( x_{\sigma0}, x_{\sigma2})\land \neg R_2( x_{\sigma1}, x_{\sigma2})\bigr)\\
\Lambda_1(x_0, x_1, x_2)\coloneqq    \bigwedge_{0\le i<j<3} \neg E(x_i, x_j)
  \end{gather*}
Let $K$ be the class of finite $L$-structures where
\begin{enumerate}
\item $E$ is an equivalence relation, 
\item $R_2$ is symmetric, irreflexive
  and $E$-equivariant, i.e.\ $(E(x_0, x_1)\land E(y_0,y_1))\implica (R_2(x_0, y_0)\coimplica
  R_2(x_1, y_1))$,
\item $R_3$ is a symmetric relation, i.e.\ $R_3(x_0,x_1, x_2)\to\bigwedge_{\sigma\in S_3} R_3( x_{\sigma0},  x_{\sigma1}, x_{\sigma2})$, and
\item   $R_3(x_0,x_1,x_2)\to \Lambda(x_0,x_1, x_2)$ is satisfied.
\end{enumerate}

\end{defin}
Note that in particular $R_2$ is still symmetric irreflexive on the quotient by $E$.  We do not add an imaginary sort for this quotient; it will be notationally convenient to mention it anyway but, formally, every reference to the quotient by $E$, the relative projection, etc, is to be understood as a mere shorthand.
\begin{pr}\label{pr:descriptionofT}
  \begin{enumerate}
  \item $K$ is a Fra\"iss\'e class with strong amalgamation.
  \end{enumerate}
  \smallskip
Let $T$ be the theory of the Fra\"iss\'e limit of $K$.
\begin{enumerate}[resume]
\item  $T$ is $\omega$-categorical, eliminates quantifiers in $L$ and has degenerate algebraic closure, i.e.\ for all sets $X\subseteq M\models T$ we have $\acl X=X$. 
\item $T$ is ternary, i.e.\ in $T$ every formula  is equivalent to a Boolean combination of formulas with at most $3$ free variables.
\item $T$ can be axiomatised as follows:
  \begin{enumerate}[label=(\Roman*)]
  \item\label{point:eqrel} $E$ is an equivalence relation with infinitely many classes, all of which are infinite.
  \item Whether $R_2(x_0,x_1)$ holds only depends on the $E$-classes of $x_0$, $x_1$; moreover, the structure induced by $R_2$  on the quotient by $E$ is elementarily equivalent to the Random Graph.
  \item \label{point:crucialaxiom}$T$ satisfies   $R_3(x_0,x_1,x_2)\to \Lambda(x_0,x_1, x_2)$, i.e.\ if $R_3(x_0,x_1,x_2)$ holds then between the $x_i$  there are precisely two $R_2$-edges and their $E$-classes are pairwise distinct.
  \item \label{point:gthg}Denote by $[x_i]_E$ the $E$-class of $x_i$. If $\Lambda(x_0, x_1, x_2)$ holds, then $R_3\restr [x_0]_E\times [x_1]_E\times [x_2]_E$ is a symmetric generic tripartite $3$-hypergraph, i.e.\ for any $i<j<3$ and $k\in \set{0,1,2}\setminus \set{i,j}$, if $U, V\subseteq [x_i]_E\times [x_j]_E$ and $U\cap V=\emptyset$ then there is $z\in [x_k]_E$ such that for every $(x,y)\in U$ we have $R_3(x,y,z)$ and for every $(x,y)\in V$ we have $\neg R_3(x,y,z)$.
  \end{enumerate}
\item $T$ is supersimple of SU-rank $2$.
\end{enumerate}
\end{pr}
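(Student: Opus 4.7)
The plan is to treat the five clauses in order, exploiting that $K$ is essentially a free relational amalgamation class modulo the $\Lambda$-constraint on $R_3$.

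For (1) I verify hereditary property (automatic, as the axioms defining $K$ are universal), joint embedding (the special case of amalgamation over $\emptyset$), and strong amalgamation. For the latter, given $A \hookrightarrow B_0, B_1$ in $K$, I take the set-theoretic pushout $C = B_0 \sqcup_A B_1$ and complete its structure freely: $E^C$ is the transitive closure of $E^{B_0}\cup E^{B_1}$ (which only glues the extensions of each $E^A$-class); $R_2^C$ is extended $E^C$-equivariantly with no new $R_2$-edges between fresh $E$-classes of opposite sides; and $R_3^C$ is declared to hold only on triples already lying in $B_0$ or in $B_1$. Axiom~\ref{point:crucialaxiom} is then vacuous on the new triples and the remaining conditions of $K$ are immediate; since no elements outside $A$ are identified, the amalgamation is strong.

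For (2), $K$ is uniformly locally finite (a finite relational language of arity at most $3$), so Ryll--Nardzewski yields $\omega$-categoricity and the standard Fraïssé argument gives quantifier elimination in $L$; degenerate algebraic closure is the standard consequence of strong amalgamation. Clause (3) follows immediately: by QE every formula is a Boolean combination of atomic $L$-formulas, each involving at most $3$ variables. For (4), the Fraïssé limit visibly satisfies \ref{point:eqrel}--\ref{point:gthg}, and conversely any countable model $N$ of these axioms is isomorphic to the limit via back-and-forth, the one-point extension property being established by using \ref{point:eqrel} to place a new element in the appropriate $E$-class, the Random Graph clause to realise any prescribed $R_2$-pattern on the quotient, and \ref{point:gthg} to realise any $\Lambda$-admissible $R_3$-pattern.

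For (5) I propose the free-amalgamation independence relation $A \find{C} B$ meaning that $AC$ and $BC$ share no $E$-class outside $C$, and no instance of $R_3$ genuinely crosses $A\setminus C$ and $B\setminus C$ beyond what is forced on one side. Routine arguments give invariance, monotonicity, symmetry, transitivity, local and finite character, and extension; the independence theorem over models is checked by an explicit three-way free amalgamation inside the monster, and is the main technical obstacle, as one must verify that the tripartite structure together with the $\Lambda$-constraint never force an inconsistency in the amalgam. Kim--Pillay then identifies this relation with nonforking, so $T$ is simple. For the rank, I think of the reduct to the $E$-quotient (a Random Graph, $\mathrm{SU}$-rank $1$) and the residual $R_3$-structure inside a fixed triple of $E$-classes (a tripartite generic $3$-hypergraph, again $\mathrm{SU}$-rank $1$); a generic $1$-type $p$ over a small model admits a forking chain of length two obtained by first fixing the $E$-class of $x$ and then realising, so $\mathrm{SU}(p)\geq 2$, while the Lascar inequalities applied to the pair (class of $x$, behaviour of $x$ within that class) bound $\mathrm{SU}(p)\leq 2$.
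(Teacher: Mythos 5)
Your treatment of clauses (1)–(4) matches the paper's approach: the paper likewise disposes of (1) as routine free amalgamation, cites Hodges for (2), derives (3) immediately from quantifier elimination in a ternary relational language, and establishes (4) by back-and-forth against axioms \ref{point:eqrel}--\ref{point:gthg}. Your filled-in details there look sound (in particular, the observation that the transitive closure of $E^{B_0}\cup E^{B_1}$ only merges classes through $A$, so no new $E$-relations or $R_2$-edges appear inside either $B_i$, and hence $\Lambda$ is preserved on old triples).

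For clause (5) the strategy (Kim--Pillay plus a two-step forking chain) is the same as the paper's, but your proposed independence relation is not the right one, and this is more than a cosmetic difference. The paper characterises nonforking as
\[
a\find_C b\iff (a\cap b\subseteq C)\ \land\ (\pi a\cap\pi b\subseteq\pi C),
\]
with no reference to $R_3$ at all. Your clause ``no instance of $R_3$ genuinely crosses $A\setminus C$ and $B\setminus C$'' makes the relation strictly finer than nonforking: $R_3$ is generic by axiom~\ref{point:gthg}, so a formula of the form $R_3(x,b,b')$ does not divide over $C$ (one realises it along a $C$-indiscernible sequence exactly as one realises $E(x,b)$ in the Random Graph), and a crossing $R_3$-instance is therefore compatible with independence. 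More concretely, your relation fails local character: for a finite tuple $a$ and a large set $B$ there can be $|B|$-many $b\in B$ with $R_3(a_i,a_j,b)$, and no small $B_0\subseteq B$ can absorb all of them, so no small base witnesses your independence. Since Kim--Pillay requires local character, your candidate relation cannot be the one that the theorem identifies with forking, and the argument for simplicity does not go through as written. A secondary issue is that ``$AC$ and $BC$ share no $E$-class outside $C$'' does not by itself entail $A\cap B\subseteq C$ when the shared element's $E$-class is already represented in $C$; the paper's two-conjunct formulation handles this explicitly. Dropping the $R_3$ clause and stating both conjuncts as in the paper repairs the argument; the SU-rank computation then reads off directly from the given characterisation, as the paper indicates.
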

\begin{proof}\*
  \begin{enumerate}
  \item Routine, left to the reader.
  \item This is standard, see e.g.~\cite[Theorem~7.1.8 and Corollary~7.3.4]{hodges}.
  \item $T$ eliminates quantifiers in a ternary relational language. 
  \item Easy back-and-forth between the Fra\"iss\'e limit of $K$ and any model of \ref{point:eqrel}--\ref{point:gthg}.
  \item Denote by $\pi$  the projection to the quotient by $E$. A routine application of the Kim-Pillay Theorem (see~\cite[Theorem~4.2]{kimpillay}) shows that $T$ is simple and forking is given by    $a\find C b\iff (a\cap b\subseteq C) \land (\pi a\cap \pi b\subseteq \pi C)$, from which we immediately see that the SU-rank of any $1$-type in $T$ is at most $2$; finding a $1$-type of SU-rank $2$ is easy.\qedhere
  \end{enumerate}
\end{proof}

\begin{defin}\label{defin:tfc}
 In $T$, define the global types
 \begin{align*}
& p(x)\coloneqq\set{R_2(x,a)\mid a\in \monster}\cup \set{\neg R_3(x,a,b)\mid a,b\in \monster}\\
 &q_0(y)\coloneqq\set{\neg R_2(y,a)\mid a\in \monster}\\
 &q_1(z_0, z_1)\coloneqq\set{\neg R_2(z_0,a)\mid a\in \monster}\cup\set{E(z_0, z_1)\land z_0\ne z_1}
\end{align*}
\end{defin}

These three types are complete by quantifier elimination and the axioms of $T$: for instance, in the case of $q_1$, the condition $E(z_0, z_1)$ together with the restriction of $q_1$ to $z_0$ decides all the $R_2$-edges of $z_1$, and for all $a,b\in \monster$ we have  $\neg\Lambda_0(z_1, a,b)$, hence $\neg R_3(z_1, a,b)$. Moreover, it follows easily from their definition that $p$, $q_0$ and $q_1$ are all $\emptyset$-invariant.

\begin{pr}\label{pr:dombreaks}
  $q_0\equidom q_1$ and in particular $q_0\domeq q_1$.  Nonetheless, $p(x)\otimes q_0(y)\ndoms p(w)\otimes q_1(z)$.
\end{pr}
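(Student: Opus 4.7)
The plan divides into two steps. For $q_0 \equidom q_1$ I would work over $A = \emptyset$ and take $r_0 \in S_{y z_0 z_1}(\emptyset)$ to be the unique completion of $(q_0 \restr \emptyset) \cup (q_1 \restr \emptyset) \cup \{y = z_0\}$; consistency is witnessed by $(c_0, c_0, c_1)$ for any $(c_0, c_1) \models q_1$, and quantifier elimination gives completeness. Under $y = z_0$ the formulas $\neg R_2(y, a)$ and $\neg R_2(z_0, a)$ coincide, so $q_0(y) \cup r_0 \proves q_1(z_0, z_1)$, and symmetrically $q_1 \cup r_0 \proves q_0(y)$.

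For $p \otimes q_0 \ndoms p \otimes q_1$ I would argue by contradiction: suppose $A \smallsubset \monster$ and a small type $r$ in variables $x, y, w, z_0, z_1$ over $A$ witness domination. By axiom \ref{point:eqrel} and saturation of $\monster$, choose $d_0 \in \monster \setminus A$ whose $E$-class is disjoint from $A$. The formula $\neg R_3(w, z_1, d_0)$ lies in $p \otimes q_1$: for any $\tilde c_1 \in \monster$ one has $\neg R_3(x, \tilde c_1, d_0) \in p$, hence $\neg R_3(w, c_1, d_0) \in p \invext \monster c_1$. Fix $(a, b, a', c_0, c_1) \models r$ in some $\monster_1 \satext \monster$ with $(a, b) \models p \otimes q_0$; by the standing assumption $(a', c_0, c_1) \models p \otimes q_1$, from which I read off $R_2(a', c_1)$, $R_2(a', d_0)$, $\neg R_2(c_1, d_0)$, and (via $E$-equivariance plus the fact that $q_0$ forces a new $E$-class) pairwise non-$E$-relatedness of $a', c_1, d_0$, so $\Lambda(a', c_1, d_0)$ holds.

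The core step is to produce $\tilde c_1 \equiv_{Aaba'c_0} c_1$ in some $\monster_2 \satext \monster_1$ with $R_3(a', \tilde c_1, d_0)$; equivalently, to show that $\tp(c_1/Aaba'c_0) \cup \{R_3(a', z, d_0)\}$ is consistent. The essential combinatorial input is $[d_0]_E \cap Aaba'c_0 = \emptyset$: $[d_0]_E \cap A = \emptyset$ by choice; $\neg E(a, d_0)$ and $\neg E(a', d_0)$ follow from $R_2(a, d_0), R_2(a', d_0) \in p$ since $R_2$ implies $\neg E$; and $\neg E(b, d_0), \neg E(c_0, d_0)$ follow because $q_0$ forces $\neg E(y, e)$ for every $e \in \monster$ (else a vertex of the random graph on $\monster/E$ would be isolated). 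Consequently no atomic $R_3$-constraint in $\tp(c_1/Aaba'c_0)$ concerns a pair from $[a']_E \times [d_0]_E$, so axiom \ref{point:gthg} applied (via compactness and the ternarity of $T$) to the tripartite structure $[a']_E \times [c_0]_E \times [d_0]_E$ realises the partial type in a saturated enough extension.

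Finally, since $\tilde c_1 \equiv_{Aaba'c_0} c_1$ the tuple $(a, b, a', c_0, \tilde c_1)$ still realises $r$, and $(a, b) \models p \otimes q_0$, so the standing assumption yields $(a', c_0, \tilde c_1) \models p \otimes q_1 \ni \neg R_3(w, z_1, d_0)$, contradicting $R_3(a', \tilde c_1, d_0)$. The failure of $\equidom$ (hence of $\domeq$) follows a fortiori from that of $\doms$. I expect the main obstacle to be precisely the consistency step above: one must leverage the freshness of $[d_0]_E$ with respect to $Aaba'c_0$ to guarantee that no $R_3$-edge at $(a', z, d_0)$ is already pinned by $\tp(c_1/Aaba'c_0)$, and then invoke the generic-tripartite axiom carefully through compactness and ternarity.
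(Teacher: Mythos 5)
Your proof of $q_0 \equidom q_1$ is essentially the paper's: any small $r$ containing $y=z_0$ works, using that $E(z_0,z_1)\land z_0\ne z_1$ already lies in $r$. Your strategy for $p\otimes q_0\ndoms p\otimes q_1$ is also in the paper's spirit --- pick a fresh $d_0$ and show that some formula $\neg R_3(w,z_i,d_0)\in p\otimes q_1$ cannot be forced by $(p\otimes q_0)\cup r$ --- but there is a genuine gap: you fix $i=1$ without ensuring that $c_1\notin Aaba'c_0$. The only way $c_1$ can lie in that set is $c_1=b$, i.e.\ $r\proves y=z_1$, and this \emph{can} happen: the restrictions of $q_0(y)$ and of $q_1$ to the single variable $z_1$ agree over any small $A$, so $(p\otimes q_0)\restr A\cup(p\otimes q_1)\restr A\cup\set{y=z_1}$ is consistent and completes to some $r\in S_{p\otimes q_0,\,p\otimes q_1}(A)$. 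For such an $r$ one has $\tp(c_1/Aaba'c_0)\proves z_1=b$, so $\tilde c_1=c_1$ is forced, $R_3(a',\tilde c_1,d_0)$ is impossible, and your contradiction never materialises.

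The paper sidesteps this by first choosing $i<2$ with $(p\otimes q_0)\cup r\proves y\ne z_i$ --- always possible, since $r\proves z_0\ne z_1$ and $r$ is complete --- and only then targeting $\neg R_3(w,z_i,d_0)$. You can repair your argument either by selecting $i$ this way, or by observing that $q_1(z_0,z_1)$ is symmetric under swapping $z_0$ and $z_1$, so one may assume $r\proves y\ne z_1$ without loss of generality. Separately, the ``compactness and ternarity'' step is stated quite informally; to make it precise you should say explicitly that $[d_0]_E\cap Aaba'c_0=\emptyset$ guarantees no prior $R_3$-constraint of $\tp(c_1/Aaba'c_0)$ passes through the pair $(a',d_0)$, and that $\Lambda(a',\tilde c_1,d_0)$ holds because $E(\tilde c_1,c_0)$ together with $\neg R_2(c_0,d_0)$ gives $\neg R_2(\tilde c_1,d_0)$ while $R_2(a',d_0)$ and $R_2(a',\tilde c_1)$ come from $p\otimes q_1$. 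Your stronger requirement $[d_0]_E\cap A=\emptyset$ (rather than the paper's $d_0\in\monster\setminus A$) is harmless and in fact makes this last verification cleaner.
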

\begin{proof}
Let $A$ be any small set and let $r(y,z_0, z_1)\in S_{q_0q_1}(A)$ contain the formula $y=z_0$. Clearly, $q_1(z)\cup r(y,z)\proves q_0(y)$. Moreover, since $E(z_0, z_1)\land z_0\ne z_1\in (q_1\restr\emptyset)\subseteq r$ we have the first part of the conclusion.

Note that $p(x)\otimes q_0(y)$  is axiomatised by \[p(x)\cup q_0(y)\cup \set{R_2(x, y)}\cup\set{ \neg R_3(x, y, a)\mid  a\in \monster}\] and similarly $p(w)\otimes q_1(z)$ is axiomatised by \[p(w)\cup q_1(z)\cup \set{R_2(w, z_0)\land R_2(w, z_1)}\cup\set{ \neg R_3(w, z_j, a)\mid j< 2, a\in \monster}\]  Let $A$ be any small set and $r(x,y,w,z)\in S_{p\otimes q_0, p\otimes q_1}(A)$, then pick any  $a\in \monster\setminus A$ and  $i<2$  such that  $(p(x)\otimes q_0(y))\cup r\proves y\ne z_i$. By genericity of $R_2$, the set    \[\Phi\coloneqq (p(x)\otimes q_0(y))\cup r\cup\set{R_2(w,a)\land R_2(w,z_i)\land \neg R_2(z_i, a)}\] is consistent\footnote{E.g.\ if $r\proves x=w\land y=z_0$ then we even have $p(x)\otimes q_0(y)\cup r\proves R_2(w,a)\land R_2(w,z_1)\land \neg R_2(z_1, a)$.
}, and by genericity of $R_3$ so is $\Phi\cup \set{R_3(w, z_i, a)}$  (as well as $\Phi\cup \set{\neg R_3(w, z_i, a)}$). This shows that \[p(x)\otimes q_0(y)\cup r\centernot \proves \set{\neg R_3(w, z_j, a)\mid j<2, a\in \monster}\subseteq p(w)\otimes q_1(z)\qedhere\]
\end{proof}
As an aside, note that anyway $p(x)\otimes q_0(y) \domd p(w)\otimes q_1(z)$ by Corollary~\ref{co:pushforward}, the map $f$ being the projection on the coordinates $(w,z_0)$.
\begin{rem}
  An inspection of the proof shows that in fact $q_0$ and $q_1$ are ``equidominance-semi-isolated'' over each other (\emph{strongly RK-equivalent} in the terminology of~\cite{tanovic}), i.e.\   there is a formula $\phi(y,z)$ consistent with $q_0(y)\cup q_1(z)$ such that $q_0(y)\cup \set{\phi(y,z)} \proves q_1(z)$ and $q_1(z)\cup \set{\phi(y,z)} \proves q_0(y)$; in this case we can take $\phi\coloneqq y=z_0\land E(z_0, z_1)\land z_0\ne z_1$. Therefore the same counterexample also works with this finer equivalence relation.
\end{rem}

\begin{question}
  Is $\invtilde$ well-defined in every \textsf{NIP} theory?
\end{question}

\subsection{Commutativity}
In this subsection we prove that  in the theory of the Random Graph $\invtilde$ coincides with $\invbar$ and is not commutative. To begin with, note that this theory is binary, hence $\invtilde$ is well-defined by Corollary~\ref{co:binstat} and Proposition~\ref{pr:obvwks}. This also follows from the characterisation of domination we are about to give in Proposition~\ref{pr:rgdegdom}.
\begin{defin}\label{defin:degdom}
Let $L_0$ be the ``empty'' language, containing only equality. We say that $T$ has \emph{degenerate domination}  iff whenever $p(x)\doms q(y)$  there is a small set $r_0$ of $L_0(\monster)$ formulas with free variables included in $xy$ and consistent with $p$ such that $p\cup r_0\proves q$.
\end{defin}
\begin{rem}\label{rem:dgdi}
It is easy to see that, if there is $r_0$ as above, then $q$ is included in $p$ up to removing realised and duplicate coordinates  and renaming  the remaining ones. 
\end{rem}

\begin{lemma}\label{lemma:degdom}
Suppose $T$ has degenerate domination. Then $T$ has algebraic domination, and in particular $\otimes$ respects $\doms$. Moreover for global types $p$ and $q$ the following are equivalent:
  \begin{enumerate}
  \item \label{point:trivdeq} There is a small set $r_0$ of $L_0(\monster)$ formulas consistent with $p\cup q$ such that $p\cup r_0\proves q$ and $q\cup r_0\proves p$.
  \item $p\equidom q$.
  \item \label{point:deq} $p\domeq q$.
  \end{enumerate}
  In particular, $\otimes$ respects $\equidom$ too.
\end{lemma}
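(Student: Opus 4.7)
The plan is first to upgrade degenerate domination to algebraic domination, then to let Propositions~\ref{pr:algdom} and~\ref{pr:obvwks} deliver the $\doms$-half of the conclusion, and finally to exploit the rigidity supplied by Remark~\ref{rem:dgdi} to drive through the cyclic equivalence of items (1)--(3). Throughout, I would tacitly assume, as the subsection justifies, that all types involved have no realised and no duplicate coordinates.

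For algebraic domination itself, one direction (``$q$ algebraic over $p$'' implies $p\doms q$) is essentially a specialisation of the argument of Proposition~\ref{pr:algdom}: an isolating formula $\psi(y,z)$ for $\tp(c/\monster b)$, placed inside a complete type $r\in S_{pq}(A)$ over a suitable small $A$, suffices to push $q$ through $p\cup r$ by invariance of $p$. For the converse, assume $p\doms q$ and pick an $L_0(\monster)$-witness $r_0$; Remark~\ref{rem:dgdi} then forces each coordinate $y_j$ to be pinned down by $r_0$ as either some $x_{i(j)}$ or a fixed element of $\monster$, with $i$ injective. Consequently, whenever $(b,c)\models p\cup r_0$, we have $c\in\dcl(\monster b)\subseteq\acl(\monster b)$. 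With algebraic domination in hand, Proposition~\ref{pr:algdom} upgrades it to stationary domination, and Proposition~\ref{pr:obvwks} gives that $\otimes$ respects $\doms$.

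For the three-way equivalence I would argue cyclically (1)$\Rightarrow$(2)$\Rightarrow$(3)$\Rightarrow$(1). The first step: given $r_0$ as in (1), complete $(p\restr A)\cup(q\restr A)\cup r_0$ to some $r\in S_{pq}(A)$ for a small $A$ containing the parameters of $r_0$ and a base of invariance for both $p$ and $q$; as $r\supseteq r_0$, it still implies both $q$ (from $p$) and $p$ (from $q$), so $r$ witnesses $p\equidom q$. The second step is trivial. For (3)$\Rightarrow$(1), apply degenerate domination to both $p\doms q$ and $q\doms p$; Remark~\ref{rem:dgdi} then yields coordinate-matching injections in opposite directions between the finite tuples $x$ and $y$, which must therefore be mutually bijections. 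The set $\set{y_j=x_{i(j)}\mid j<\abs y}$ drawn from the first witness already determines $x$ from $y$ (and conversely), so it witnesses both halves of (1).

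The main obstacle is this last step: in general, two separate $L_0$-witnesses of domination in opposite directions need not be simultaneously consistent (compare \textsf{DLOP} in Example~\ref{eg:deq}), and it is precisely the rigidity of Remark~\ref{rem:dgdi} that lets one collapse them into a single bijective identification of coordinates. The closing clause is then formal: if $q_0\equidom q_1$ then a fortiori $q_0\domeq q_1$; since $\otimes$ already respects $\doms$, we obtain $p\otimes q_0\domeq p\otimes q_1$; and applying the equivalence (2)$\Leftrightarrow$(3) to this new pair upgrades it to $p\otimes q_0\equidom p\otimes q_1$.
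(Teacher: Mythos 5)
Your proof is correct and takes essentially the same route as the paper's: Remark~\ref{rem:dgdi} supplies the rigid coordinate identification that both yields algebraic domination (hence compatibility with $\doms$ via Propositions~\ref{pr:algdom} and~\ref{pr:obvwks}) and, for $(3)\Rightarrow(1)$, forces the two $L_0$-witnesses to be opposing injections that collapse into a single bijective $r_0$. The only cosmetic differences are that you argue the equivalence cyclically rather than proving $3\Rightarrow 1$ directly, and that you spell out the trivial half of the algebraic-domination biconditional, which the paper leaves implicit.
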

\begin{proof}
By Remark~\ref{rem:dgdi} degenerate domination implies algebraic domination. The implications $1\allora 2\allora 3$ are trivial and hold in any theory.  To prove $3\allora 1$ suppose $p(x)\domeq q(y)$, and let $r_1$ and $r_2$ be small sets of $L_0(\monster)$ formulas with free variables included in $xy$ and consistent with $p\cup q$ such that $p\cup r_1\proves q$ and $q\cup r_2\proves p$. It follows easily from Remark~\ref{rem:dgdi} that we may find $r_0$ satisfying the same restrictions as $r_1$ and $r_2$ and such that $p\cup r_0\proves q$ and $q\cup r_0\proves p$ hold simultaneously. 
\end{proof}
\begin{pr}\label{pr:rgdegdom}
  The Random Graph has degenerate domination.
\end{pr}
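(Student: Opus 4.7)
The plan is to combine quantifier elimination (in the language $\set{E}$) with the extension property of the Random Graph. Suppose $p(x)\doms q(y)$ is witnessed by some $r\in S_{pq}(A)$; as permitted by the preceding discussion, I assume that $p$ and $q$ have no realised and no duplicate coordinates. The core claim is that $r$ must imply $y_j=x_{f(j)}$ for some function $f$ from the indices of $y$ to those of $x$. Granting this claim, the set $r_0\ceq\set{y_j=x_{f(j)}\mid j<\abs{y}}$ is a set of $L_0(\emptyset)$-formulas consistent with $p$; moreover $r$ deductively contains $r_0$, and $p\cup r_0$ already determines $\tp(x,y/\monster)$ as the pushforward of $p$ along $f$. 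Hence $p\cup r_0$ and $p\cup r$ have the same realisations, so $p\cup r\proves q$ gives $p\cup r_0\proves q$, which is exactly degenerate domination.

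To prove the core claim, I would fix $(b,d)\models p\cup r$ in some $\monster^*\satext \monster$ and suppose, for contradiction, that some coordinate $d_j$ differs from every $b_i$. Because $q$ omits realised and duplicate coordinates, $d_j\notin \monster$ and $d_j\ne d_k$ for $k\ne j$. Picking any $c\in \monster\setminus A$, one automatically has $c\ne b_i$ for every $i$ (since $p$ omits realised coordinates) and $c\ne d_k$ for every $k$. By quantifier elimination in $\set{E}$ together with the extension property of the Random Graph, the partial type stating that a fresh element $d'_j$ has the same $\set{E}$-atomic type as $d_j$ over $\monster^*\setminus\set{c}$ but the opposite $E$-relation with $c$ is consistent, hence realised in a further monster. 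Writing $d'$ for $d$ with $d_j$ replaced by $d'_j$, we obtain $\tp(b,d'/A)=\tp(b,d/A)=r$, so $(b,d')\models p\cup r$; then $\tp(d'/\monster)=q=\tp(d/\monster)$, yet these two types disagree on the $\monster$-formula $E(y_j,c)$, a contradiction.

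The main (minor) obstacle will be verifying that the partial type defining $d'_j$ is indeed consistent and realisable, which reduces to the finite extension property holding in every model of the Random Graph together with a standard saturation argument. Every other step is an immediate consequence of quantifier elimination in $\set{E}$ and of the assumptions that $p$ and $q$ contain no realised or duplicate coordinates.
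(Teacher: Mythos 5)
Your proof is correct and takes essentially the same approach as the paper's: if $r$ fails to identify some $y_j$ with a coordinate of $x$, pick a parameter $c\in\monster\setminus A$ and exploit the genericity of the Random Graph to flip the $E$-edge between (a realisation of) $y_j$ and $c$, contradicting $p\cup r\proves q$. The paper phrases the contradiction more tersely, by observing directly that $p\cup r$ is then consistent with both $E(y_n,b)$ and $\neg E(y_n,b)$, whereas you spell out the two realisations explicitly; the content is identical.
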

\begin{proof}
  Suppose that $r\in S_{pq}(A)$ witnesses $p(x)\doms q(y)$ and assume that $q$ has no realised or duplicate coordinates. Up to a permutation of the $y_j$, assume that $r$ identifies  $y_0,\ldots, y_{n-1}$ with some variables in $x$ and for all $j$ such that $n\le j<\abs y$ and all $i<\abs x$ we have $r\proves x_i\ne y_j$.  If $n=\abs y$ then we can let $r_0$ be a suitable restriction of $r$ and we are done, so assume that $n<\abs y$, hence for every $i<\abs x$ we have  $r\proves y_n\ne x_i$.  Pick any $b\in \monster\setminus A$;   by the Random Graph axioms  $p\cup  r$ is consistent with both $E(y_n, b)$ and $\neg E(y_n, b)$, contradicting $p\cup  r \proves q$.
\end{proof}
\begin{co}\label{co:rgncomm}
  In the theory of the Random Graph, $\invtilde(=\invbar)$ is not commutative.
\end{co}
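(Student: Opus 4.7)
The plan is to exhibit two $\emptyset$-invariant global $1$-types in the Random Graph whose tensor products in the two orders are not domination-equivalent. First I would note that by Proposition~\ref{pr:rgdegdom}, Lemma~\ref{lemma:degdom} and Proposition~\ref{pr:obvwks}, the Random Graph has degenerate domination, so $\otimes$ respects $\domeq$ and moreover $\invtilde = \invbar$; hence producing a single such pair will be enough.

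The natural candidates are the ``connected to everything'' and ``connected to nothing'' types
\[
p_+(x) \coloneqq \set{E(x,a) \mid a \in \monster}, \qquad p_-(y) \coloneqq \set{\neg E(y,a) \mid a \in \monster}.
\]
By quantifier elimination and the Random Graph axioms these are complete, and they are manifestly $\emptyset$-invariant. I would then unpack the two products: picking $b \models p_-$ and applying $p_+ \invext \monster b$ forces $E(x,b)$, so $p_+(x) \otimes p_-(y)$ contains $E(x,y)$; the symmetric computation with $b \models p_+$ shows that $p_-(x) \otimes p_+(y)$ contains $\neg E(x,y)$. The rest of the content of each product matches after swapping the two coordinates, and both products are complete $2$-types with no realised or duplicate coordinates.

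To derive $p_+ \otimes p_- \not\domeq p_- \otimes p_+$, I would invoke Lemma~\ref{lemma:degdom}: under degenerate domination, this would amount to the existence of a small set $r_0$ of $L_0(\monster)$-formulas consistent with both products such that each product together with $r_0$ entails the other. By Remark~\ref{rem:dgdi}, since neither product contains realised or duplicate coordinates, any such $r_0$ would force one product to arise from the other by a bijective renaming of the two variables. Only two such bijections exist, and a one-line case analysis rules them both out: the identity renaming disagrees on the $p_\pm$ content of the first coordinate (i.e.\ on $E(x,a)$ for $a\in\monster$), while the swap renaming disagrees on the sign of $E(x,y)$. Therefore no $r_0$ exists, and non-commutativity of $\invtilde$ follows immediately.

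The main obstacle is the correct deployment of Remark~\ref{rem:dgdi} to cut down the a priori large space of candidate $L_0(\monster)$-witnesses to the two bijective renamings of a pair of variables; once that reduction is in place, the verification of the incompatible $E$-edges is routine, and the Random Graph's degeneracy takes care of the passage between $\equidom$, $\domeq$ and the bare existence of such an $r_0$.
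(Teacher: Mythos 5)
Your proof is correct and follows essentially the same route as the paper's: the same pair of $\emptyset$-invariant types (connected to everything, connected to nothing), the same computation showing the two products disagree on the single formula $E(x,y)$ versus $\neg E(x,y)$, and the same appeal to degenerate domination together with Remark~\ref{rem:dgdi}. Your explicit case analysis of the two possible coordinate bijections simply spells out what the paper leaves to the reader.
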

\begin{proof}
  Consider the global types  $p(x)\coloneqq \set{\neg E(x, a)\mid a\in \monster}$ and $q(y)\coloneqq \set{E(y, a)\mid a\in \monster}$. Both are clearly $\emptyset$-invariant, and it follows straight from the definitions that $p(x)\otimes q(y)\proves \neg E(x,y)$ and $q(y)\otimes p(x)\proves E(x,y)$. The conclusion now follows from degenerate domination and Remark~\ref{rem:dgdi}.
\end{proof}

 Other easy consequences of Proposition~\ref{pr:rgdegdom} are that in the theory of the Random graph
 \begin{enumerate}
 \item $\invtilde$  is not generated by the classes of the $n$-types for any fixed $n<\omega$, 
 \item $\invtilde$ is not generated by any family of classes of pairwise weakly orthogonal types (see Definition~\ref{defin:wort}), and 
 \item for any nonrealised $p$ the submonoid generated by  $\class p$  is infinite.
 \end{enumerate}

\begin{question}
Let $T$ be \textsf{NIP} and assume $\invtilde$ is well-defined. Is it necessarily commutative?
\end{question}

The analogous question for $\invbar$ has a negative answer. We are grateful to E.~Hrushovski for pointing out the following counterexample and allowing us to include it.

Let \textsf{DLOP} be as in Example~\ref{eg:deq}. It eliminates quantifiers in $\set{<, P}$, it is \textsf{NIP}, and it is binary, hence $\invtilde$ and  $\invbar$ are well-defined by Corollary~\ref{co:binstat} and Proposition~\ref{pr:obvwks}.

\begin{pr}[Hrushovski]\label{pr:dlopncomm}
In $\textsf{DLOP}$, $\invbar$ is not commutative.
\end{pr}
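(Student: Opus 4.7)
The plan is to take $p(x)$ and $q(y)$ to be the types at $+\infty$ in $P$ and in $\neg P$ respectively,
\[
p(x)\coloneqq \{P(x)\}\cup \{x>a\mid a\in \monster\},\qquad q(y)\coloneqq \{\neg P(y)\}\cup \{y>a\mid a\in \monster\},
\]
and to show $p\otimes q\nequidom q\otimes p$. Both are complete global $\emptyset$-invariant types by quantifier elimination in $\{<,P\}$. First I would compute the two tensor products: for any $b\models q$ one has $b\equiv_\emptyset b'$ for any $b'\in\neg P\cap\monster$, so since $p\proves x>b'$ the characterisation of $p\invext \monster b$ gives $x>b\in p\invext \monster b$, whence $x>y\in p\otimes q$; symmetrically $y>x\in q\otimes p$. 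Thus $p\otimes q$ is axiomatised by $p(x)\cup q(y)\cup\{x>y\}$ and $q\otimes p$ by $p(x)\cup q(y)\cup\{y>x\}$.

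Next I would suppose for contradiction that some small $A$ and some $r$ in the four variables $(x_1,y_1,y_2,x_2)$, with $(x_1,y_1)\models p\otimes q$ and $(y_2,x_2)\models q\otimes p$, witness equidominance, and reduce $r$ to a finite case analysis. By quantifier elimination $r$ is determined by the (fixed) $P$-values, by the cuts of the four variables in $A$ (each forced above $A$ by the restrictions to $A$ of the two tensor products), and by the $\{=,<\}$-pattern among the four variables themselves. Different $P$-values forbid $x_i=y_j$, and identifying both $x_1=x_2$ and $y_1=y_2$ would force the contradiction $x_1>y_1$ and $y_1=y_2>x_2=x_1$. So the possibilities are just the single identifications $\{x_1=x_2\}$ or $\{y_1=y_2\}$, or no identification and one of the six interleavings of $x_1>y_1$ with $y_2>x_2$.

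The key observation is the following pair of symmetric necessary conditions. The implication $(p\otimes q)\cup r\proves q\otimes p$ requires both $y_2$ and $x_2$ to be forced above $\monster$; since $p\otimes q$ only knows $x_1,y_1>\monster$ with $y_1$ the smaller, and since $y_2>x_2$ belongs to $(q\otimes p)\restr\emptyset$, this boils down to $x_2>y_1$. Dually, $(q\otimes p)\cup r\proves p\otimes q$ boils down to $y_1>x_2$. These two strict inequalities are incompatible, and a routine check shows that in every one of the eight cases above exactly one of them is forced by $r$: the three no-identification orderings with $y_1<x_2$ together with the identification $x_1=x_2$ satisfy the first but not the second, while the dual set of four cases satisfies only the second. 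Hence no $r$ can witness equidominance, so $[p\otimes q]\neq[q\otimes p]$ in $\invbar$. The only real obstacle is the finite bookkeeping of the case analysis; the symmetric ``opposite-ordering'' obstruction makes the failure conceptually transparent.
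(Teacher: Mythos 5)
Your proof is correct, but it proceeds differently from the paper's. The paper reduces the question to a one-step reuse of Example~\ref{eg:deq}: taking $r\in S_{p\otimes q,\,q}(\emptyset)$ containing $y=z$, one checks $p(x)\otimes q(y)\equidom q(z)$, and symmetrically $q\otimes p\equidom p$; since Example~\ref{eg:deq} already shows $p\nequidom q$, transitivity of $\equidom$ gives $(p\otimes q)\equidom q\nequidom p\equidom(q\otimes p)$. Your argument instead attacks $p\otimes q\nequidom q\otimes p$ head-on, by quantifier elimination and a case analysis on the possible witnessing small types $r$. This is more self-contained (it essentially re-proves the $p\nequidom q$ obstruction internally), while the paper's route is shorter and makes the structural mechanism more visible: each tensor product is equidominant to the factor that sits lower over $\monster$, so commutativity of $\invbar$ fails precisely because $p\nequidom q$. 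A minor stylistic point: your eight-case enumeration is dispensable. Once you observe that any $r\in S_{p\otimes q,\,q\otimes p}(A)$, being a complete type with $x_2,y_1$ forced into distinct $P$-classes, must decide whether $x_2>y_1$ or $y_1>x_2$, and that each of the two decisions defeats one direction of the required implication (since $A$ is small and $y_1$, respectively $x_2$, is the lowest unbounded coordinate available on the relevant side), the contradiction is immediate without enumerating interleavings.
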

\begin{proof}
Let $p$ be the type at $+\infty$ in the predicate $P$ and $q$ the type at $+\infty$ in $\neg P$, and note that both types are $\emptyset$-invariant. Let $r\in S_{p\otimes q, q}(\emptyset)$  contain the formula $y=z$. Then $r$ witnesses $p_x\otimes q_y\equidom q_z$, and similarly one shows that $q\otimes p\equidom p$. As shown in Example~\ref{eg:deq}, $p$ and $q$ are not equidominant, and therefore we have  $(p\otimes q)\equidom q\nequidom p\equidom (q\otimes p)$.
\end{proof}
This counterexample exploits crucially $\equidom$, as opposed to $\domeq$. In fact, in \textsf{DLOP} $\invtilde$ is the same as in the restriction of $\monster$ to $\set <$, and in \textsf{DLO} $\invtilde$ is commutative. A further analysis 
also shows that $(\invbar, \otimes)$ cannot be endowed with any order $\le$ compatible  with $\otimes$ in which $\class 0$ is the minimum. In fact, if $p$ and $q$ are as above, then we have already shown that  $(p\otimes q)\equidom q\nequidom p\equidom (q\otimes p)$. If we had an order $\le$ as above then we would get
\[
  \class{p}=\class{p}\otimes\class 0\le\class p\otimes \class q=\class q=\class{q}\otimes\class 0\le\class q\otimes \class p=\class p
\]
contradicting $\class p \ne \class q$.

\section{Properties Preserved by Domination}\label{section:prpr}
In this section we show that some properties  are preserved downwards by domination. These invariants also facilitate  computations of $\invtilde$ and $\invbar$ for specific theories; an immediate consequence is for instance Corollary~\ref{co:invbarteqchange}, that such monoids may change when passing to $T^\eq$.

The next results are related to the ones in~\cite{tanovic}, which contains a study of weak orthogonality and the global RK-order (similar to domination) in the case of generically stable regular types. Of particular interest are~\cite[Proposition~3.6]{tanovic}, to which Theorem~\ref{thm:propertiespreserved} is related, and~\cite[Theorem~4.4]{tanovic}.
\subsection{Finite Satisfiability, Definability, Generic Stability}
\begin{defin}
  Let $p\in \invtypes_x(\monster, A)$. A \emph{Morley sequence} of $p$ over $A$ is an $A$-indiscernible sequence $\seq{a^i\mid i\in I}$, indexed on some totally ordered set $I$,   such that for any $\bla i0<{n-1}$ in $I$ we have $\tp(a^{i_{n-1}},\ldots, a^{i_0}/A)=\pow pn\restr A$ [sic]\footnote{E.g.\ $(a^1, a^0)\models (p(x^1)\otimes p(x^0))\restr A$. This awkwardness in notation is an unfortunate consequence of the order in which $\otimes$ is written, i.e.\ realising the type on the right first.}.  
\end{defin}

\begin{defin}
Let $M\smallprec \monster$ and $A\smallsubset \monster$.  
\begin{enumerate}
\item A partial type $\pi$ is \emph{finitely satisfiable in $M$} iff for every finite conjunction $\phi(x)$ of formulas in $\pi$ there is $m\in M$ such that $\models \phi(m)$.
\item A global type $p\in S_x(\monster)$ is \emph{definable over $A$} iff it is $A$-invariant and for every $\psi(x;y)\in L$ the set  $d_p\psi$ is clopen, i.e.\ of the form $\set{q\in S_y(A)\mid \phi\in q}$ for a suitable $\phi\in L(A)$.
\item A global type $p\in S_x(\monster)$ is \emph{generically stable over $A$} iff it is $A$-invariant and for every ordinal $\alpha\ge\omega$ and Morley sequence $(a^i\mid i<\alpha)$ of $p$ over $A$, the set of formulas $\phi(x)\in L(\monster)$ true of all but finitely many $a^i$ is a complete global type. 
\end{enumerate}
We say that $p$ is \emph{definable} iff it is definable over $A$ for some small $A$, and similarly for the other two notions. 
\end{defin}
The definition of generic stability we use is that of~\cite[Definition~1.6]{gstabstab}.

It is well-known (see~\cite[Lemma~12.10]{poizat}) that every partial type which is finitely satisfiable in $M$ extends to a global type still finitely satisfiable in $M$, and that  if $p\in S(\monster)$ is finitely satisfiable in $M$ then $p$ is $M$-invariant (see~\cite[Theorem~12.13]{poizat}). Moreover all the notions above are monotone: for  instance if $p$ is generically stable over $A$ and $A\subseteq B$, then $p$ is generically stable over $B$, as Morley sequences over $B$ are in particular Morley sequences over $A$.

\begin{fact}[\!\!{\cite[Proposition~1(ii)]{piltan}}]\label{fact:gsfs}
  If $p$ is generically stable over a model $M$, then $p$ is finitely satisfiable in $M$.
\end{fact}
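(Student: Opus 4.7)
The plan is to derive finite satisfiability in $M$ from the symmetry phenomena that accompany generic stability, reducing the statement to the trivial fact that $p\restr M$ is finitely satisfiable in $M$ (every complete type over a model is).

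First, I would exploit the Morley-sequence definition to establish two auxiliary properties of a generically stable $p$ over $M$. (a) \emph{Total indiscernibility}: any Morley sequence $(a^i\mid i<\omega)$ of $p$ over $M$ is indiscernible under arbitrary permutations of the index set, not merely order-preserving ones. This follows from generic stability because swapping two adjacent terms produces another sequence whose average type still computes $p$, forcing the swapped tuple to have the same type over $M$. (b) \emph{Definability over $M$}: for each $\phi(x;y)\in L$, the set $\{b\in\monster\mid \phi(x;b)\in p\}$ is $L(M)$-definable; this is deduced from (a) by noting that $\phi(x;b)\in p$ iff $\phi(a^i;b)$ holds for cofinitely many $i$, and total indiscernibility forces this truth value to depend only on $\tp(b/M)$, which is then shown to be definable by a standard compactness plus extraction of an indiscernible.

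Second, I would use (a) and (b) to prove the key symmetry: for every global $M$-invariant type $q$, one has $p(x)\otimes q(y)=q(y)\otimes p(x)$. The argument is that, given $\phi(x,y;d)\in L(\monster)$, one computes $\phi\in p\otimes q$ by realising $q$ first and then applying $(d_p\phi)$; using total indiscernibility of a Morley sequence of $p$ one shows this coincides with the computation of $\phi\in q\otimes p$. Once this commutation is in hand, pick any global $q$ extending $p\restr M$ and finitely satisfiable in $M$ (such $q$ exists by the standard extension theorem for coheirs over a model). Given any formula $\phi(x;d)\in p$ and setting $\tilde\phi(x,y)\ceq\phi(x;y)$, commutativity applied to $\tilde\phi$ forces $\phi(x;d)\in q$ as well, so $p=q$ and hence $p$ is finitely satisfiable in $M$.

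The main obstacle I expect is step two: cleanly extracting the commutation $p\otimes q=q\otimes p$ from the Morley-sequence formulation of generic stability, without already having finite satisfiability in hand (which would make the symmetry almost tautological via the coheir characterisation). The delicate point is that the average-type computation in $p\otimes q$ involves realising $q$ in an extension of $\monster$, and one must ensure that total $M$-indiscernibility of the Morley sequence of $p$ survives the addition of these new parameters. I would handle this by building the Morley sequence of $p$ over $Mdb$ for $b\models q\restr Md$ and invoking monotonicity of generic stability in the base, together with the fact that a subsequence of a totally indiscernible sequence remains totally indiscernible.
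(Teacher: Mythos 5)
The paper does not prove this statement: it is imported as a cited Fact from \cite[Proposition~1(ii)]{piltan}, so there is no ``paper's own proof'' to compare against, only the reference. I will therefore assess your argument on its merits.

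Your steps (a), (b) and the commutation $p\otimes q=q\otimes p$ for $q$ invariant are all genuine and standard consequences of generic stability (the last one is even recorded in this paper as Proposition~\ref{pr:gstabcommute}), although the sketches you give, in particular for total indiscernibility, are not yet proofs: e.g.\ it is not clear that transposing two adjacent terms of a Morley sequence again produces a Morley sequence, and the actual argument for total indiscernibility is more delicate than ``the swapped tuple has the same average''. But the real gap is in your final step. From $p\otimes q=q\otimes p$ together with $q\supseteq p\restr M$ and $q$ finitely satisfiable in $M$ you cannot conclude $p=q$. First, there is a type/variable mismatch: a formula $\phi(x;d)\in p$ has $\abs d$ not generally equal to the number of variables of $q$, so ``commutativity applied to $\tilde\phi(x,y)\coloneqq\phi(x;y)$'' has no meaning as stated. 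Second, even leaving this aside, commutativity of a pair of invariant types extending the same restriction to $M$ simply does not force them to coincide. Concretely, in $\textsf{DLO}$ let $p$ be the heir and $q$ the coheir of the type at $+\infty$ over $M$: both are $M$-invariant, both extend $p\restr M$, $q$ is finitely satisfiable in $M$, a direct computation shows $p\otimes q=q\otimes p$ (the definable type commutes with the finitely satisfiable one), and yet $p\ne q$ and $p$ is not finitely satisfiable in $M$. So the implication you rely on is false, and nothing in your argument uses generic stability of $p$ beyond establishing the (insufficient) commutation. Even if one replaces $q$ by a coheir of $\tp(d/M)$, the symmetry only produces a witness in $M$ for the \emph{parameter} variable, not for the \emph{object} variable, and an attempted bootstrap via finite satisfiability of $p\restr M$ fails because the witness in $M$ need not realise all of $\tp(d/M)$.

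A route that does work, using only ingredients you already have, is as follows. From generic stability and compactness one gets a uniform bound $n$ for the ``number of alternations'': for a Morley sequence $(a^0,\dots,a^{2n})$ of $p$ over $M$ inside $\monster$ and for all $b\in\monster$, $\phi(x;b)\in p$ iff more than $n$ of the $a^i$ satisfy $\phi(x;b)$. Combined with $M$-invariance this gives your step (b), definability of $p$ over $M$, with defining formula $\delta_\phi(y)\in L(M)$. Now observe that the sentence
\[
\theta(x^0,\dots,x^{2n})\coloneqq\forall y\;\Bigl(\delta_\phi(y)\implica\bigvee_{\abs S>n}\bigwedge_{i\in S}\phi(x^i;y)\Bigr)
\]
is in $L(M)$ and belongs to $\tp(a^0,\dots,a^{2n}/M)=\pow p{2n+1}\restr M$, which is a type over the \emph{model} $M$ and hence trivially finitely satisfiable in $M$. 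Pick $m^0,\dots,m^{2n}\in M$ satisfying $\theta$; since $\models\delta_\phi(d)$, more than $n$ of them satisfy $\phi(x;d)$, giving the required witness in $M$. This uses definability and the completeness of the average type (your ingredients), but sidesteps commutativity entirely.
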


\begin{lemma}\label{lemma:fspartial}
  Suppose $p\in\invtypes_x(\monster)$ is finitely satisfiable in $M$ and $r\in S_{xy}(M)$ is consistent with $p$. Then $p\cup r$ is finitely satisfiable in $M$.
\end{lemma}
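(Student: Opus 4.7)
The goal is to show that an arbitrary finite conjunction of formulas from $p\cup r$ is realised by a tuple in $M$. So fix $\phi(x)\in p$ and $\psi(x,y)\in r$ (taking finite conjunctions inside each type); I need $(m,m')\in M^{\abs x+\abs y}$ with $\models\phi(m)\land \psi(m,m')$.

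The first step is to reduce to a statement about $p$ alone by existentially quantifying $y$. Since $r\in S_{xy}(M)$, the formula $\psi(x,y)$ lies in $L(M)$, and hence $\exists y\,\psi(x,y)$ is an $L(M)\subseteq L(\monster)$ formula. Consistency of $p\cup r$ gives a realisation $(a,b)$ in some larger monster, whence $\exists y\,\psi(x,y)$ is consistent with $p$; as $p$ is a complete type over $\monster$, this forces $\exists y\,\psi(x,y)\in p$. Therefore $\phi(x)\land \exists y\,\psi(x,y)\in p$.

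Now I invoke finite satisfiability of $p$ in $M$: there exists $m\in M$ with $\models \phi(m)\land \exists y\,\psi(m,y)$. The remaining point is to pull the witness for $\exists y$ down into $M$. Since $m\in M$ and $\psi\in L(M)$, the formula $\exists y\,\psi(m,y)$ is an $L(M)$-sentence; by elementarity ($M\prec \monster$) it holds in $M$, so pick $m'\in M$ with $\models \psi(m,m')$. Then $(m,m')$ witnesses finite satisfiability of the chosen conjunction, completing the proof.

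There is no serious obstacle here; the only subtlety to watch is keeping track of the languages of parameters, i.e.\ that $\psi$ has parameters only from $M$ (so elementarity applies) while $\phi$ is allowed to have parameters from all of $\monster$ (where finite satisfiability of $p$ is what lets us choose $m\in M$ satisfying it).
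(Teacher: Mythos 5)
Your proof is correct and follows essentially the same route as the paper's: use finite satisfiability of $p$ to find $m\in M$ realising $\phi(x)$ together with $\exists y\,\psi(x,y)$, then use that $\psi$ has parameters only in $M$ (and $M\prec\monster$) to pull the existential witness down into $M$. The only cosmetic difference is that the paper applies finite satisfiability to $\exists y\,(\phi(x)\land\rho(x,y))\in p$ as a single formula, whereas you keep $\phi$ outside the quantifier; this changes nothing.
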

\begin{proof}
  Pick any $\phi(x)\in p$ and $\rho(x,y)\in r$. As $p\cup r$ is consistent, we have $p\proves \exists y\; (\phi(x)\wedge\rho(x,y))$, and as $p$ is finitely satisfiable in $M$ there is $m^0\in M$ such that $\models \exists y\; (\phi(m^0)\wedge\rho(m^0,y))$. In particular, $\models \exists y\; \rho(m^0, y)$, and since $\rho(m^0, y)\in L(M)$ and $M$ is a model there is $m^1\in M$ such that $\models \rho(m^0, m^1)$, so $(m^0, m^1)\models \phi(x)\land \rho(x,y)$.
\end{proof}
We can now prove the main result of this section. Part~\ref{point:gstab} can be seen as a generalisation of~\cite[Proposition~3.6]{tanovic}; the missing step to formally call it a generalisation would be to know that for a regular type $p$  the equivalence $p\nwort q\sse p\domd q$ held. To the best of the author's knowledge, this is currently only known for strongly regular generically stable types, or under additional assumptions such as stability. See~\cite{tanovic} for the definitions of \emph{regularity}  and \emph{strong regularity} in this context, and the next subsection for $\wort$.
\begin{thm}\label{thm:propertiespreserved}
  Suppose $A$ is a small set such that $p_x,q_y\in\invtypes(\monster, A)$ and $r\in S_{pq}(A)$ is such that $p\cup r\proves q$. 
  \begin{enumerate}
  \item If $A=M$ is a model and $p$ is finitely satisfiable in $M$, then so is $q$.
  \item If $p$ is definable over $A$, then so is $q$.
      \item \label{point:gstab}  If $A=M$ is a model and $p$ is generically stable over $M$, then so is $q$.
  \end{enumerate}
\end{thm}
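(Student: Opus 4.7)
The plan is to prove the three parts in turn, with Parts 1 and 2 being essentially formal consequences of $p\cup r\proves q$ combined with the hypotheses on $p$, while Part 3 will require an extra ``lift'' of the given Morley sequence of $q$ to one attached to a Morley sequence of $p$.

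For Part 1, given $\psi(y)\in q$, compactness applied to $p\cup r\proves q$ yields $\phi(x)\in p$ and $\rho(x,y)\in r$ with $\phi\land\rho\proves\psi$, and Lemma~\ref{lemma:fspartial} supplies $(m^0,m^1)\in M^2$ realising $\phi\land\rho$, hence $\models\psi(m^1)$. For Part 2, fix $\psi(y;z)\in L$ and, for each $\rho(x,y)\in r$, define the $L(A)$-formulas $\sigma_\rho(x;z)\ceq\forall y(\rho(x,y)\to\psi(y;z))$ and $\sigma'_\rho(x;z)\ceq\forall y(\rho(x,y)\to\neg\psi(y;z))$. By $A$-definability of $p$, some $L(A)$-formulas $\chi_\rho(z)$, $\chi'_\rho(z)$ define $\{c\in\monster:\sigma_\rho(x;c)\in p\}$ and $\{c\in\monster:\sigma'_\rho(x;c)\in p\}$. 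A compactness argument on $p\cup r\proves q$ shows $\psi(y;c)\in q\iff\;\models\chi_\rho(c)$ for some $\rho\in r$, and analogously for $\neg\psi$. Completeness of $q$ forces the open sets $\bigcup_\rho\{c:\models\chi_\rho(c)\}$ and $\bigcup_\rho\{c:\models\chi'_\rho(c)\}$ to partition $S_z(A)$; both are therefore clopen, and by compactness of $S_z(A)$ a finite disjunction $\bigvee_{\rho\in r_0}\chi_\rho(z)\in L(A)$ defines $d_q\psi$.

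For Part 3, given a Morley sequence $(b^i\mid i<\alpha)$ of $q$ over $M$, the plan is to produce a Morley sequence $(a^i)$ of $p$ over $M$ with $(a^i,b^i)\models r$ for all $i$. Since $p$ is generically stable, Fact~\ref{fact:gsfs} gives $p$ finitely satisfiable in $M$, and then Lemma~\ref{lemma:fspartial} makes $p\cup r$ finitely satisfiable in $M$. Extend $p\cup r$ to a global type $\tilde t\in S_{xy}(\monster)$ still finitely satisfiable in $M$: such $\tilde t$ is $M$-invariant and, by $p\cup r\proves q$, contains $q$. In a sufficiently saturated ambient monster take a Morley sequence $(\bar a^i,\bar b^i\mid i<\alpha)$ of $\tilde t$ over $M$; its projections are Morley sequences of $p$ and $q$ over $M$, and each pair realises $\tilde t\restr M\supseteq r$. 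By $M$-strong homogeneity an $M$-automorphism $\tau$ maps $(\bar b^i)$ to $(b^i)$; setting $a^i\ceq\tau(\bar a^i)$ yields the desired lift, with $(a^i)$ still Morley for $p$ over $M$.

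With the lift in place, the transfer is routine. Given $\psi(y)\in L(\monster)$, by completeness of $q$ assume $\psi\in q$; compactness on $p\cup r\proves\psi$ yields $\rho\in r$ with $\sigma_\rho(x)\ceq\forall y(\rho(x,y)\to\psi(y))\in p$. Using the standard fact that for a generically stable $p$ over $M$ the average over $\monster$ of any Morley sequence of $p$ over $M$ equals $p$ itself, $\sigma_\rho(a^i)$ holds cofinitely; combined with $\rho(a^i,b^i)$, a consequence of $(a^i,b^i)\models r$, this forces $\psi(b^i)$ cofinitely. The case $\neg\psi\in q$ is symmetric, so the set of $L(\monster)$-formulas true of cofinitely many $b^i$ is a complete global type, proving $q$ is generically stable over $M$. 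The main obstacle is the lift in Part 3: it is precisely there that generic stability of $p$ is used, via finite satisfiability, to produce an $M$-invariant global completion $\tilde t$ of $p\cup r$ whose Morley sequence coherently connects the two types.
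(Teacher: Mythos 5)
Your proposal is correct, and the overall structure mirrors the paper's for Parts~1 and~2, but Part~3 takes a genuinely different and in some ways cleaner route.

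Parts~1 and~2: your Part~1 is essentially identical to the paper's (compactness plus Lemma~\ref{lemma:fspartial}). Your Part~2 is organised a bit differently --- you cover $d_q\psi$ by the clopen sets defined by $\chi_\rho\ceq d_p\sigma_\rho$ (for $\rho$ ranging over $r$) and invoke compactness of $S_z(A)$, whereas the paper shows that each point $\tp(d/A)\in d_q\psi$ is interior by producing a single $L(A)$-formula $\delta$. These are two presentations of the same idea; your bi-implication $\psi(y;c)\in q\iff\models\chi_\rho(c)$ for some $\rho\in r$ is the same compactness computation that drives the paper's $\theta$.

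Part~3 is where you diverge. The paper argues by contraposition: assuming $q$ is not generically stable, it extracts a witnessing Morley sequence and index set $I$, aligns it with a Morley sequence of a global finitely-satisfiable extension $\hat r\supseteq p\cup r\cup q$, and then appends a fresh Morley sequence $\langle a^j\mid j\in J\rangle$ of $p$ over $Md\{a^i\}$, so that the concatenation witnesses failure of generic stability of $p$. You instead argue directly: lift the given Morley sequence $(b^i)$ of $q$ to a Morley sequence $(a^i,b^i)$ of the same extension $\tilde t$, show $(a^i)$ is Morley for $p$ over $M$ (this uses, implicitly but correctly, that the $x$-projection of $\tilde t\invext \monster c$ is $p\invext\monster c$ by uniqueness of $M$-invariant extensions, hence $\tilde t^{(n)}$ projects to $p^{(n)}$), and then transfer using $\sigma_\rho\in p$. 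Both arguments hinge on the same combination Fact~\ref{fact:gsfs} + Lemma~\ref{lemma:fspartial} to get a global $M$-finitely-satisfiable completion of $p\cup r\cup q$, so the key technical step is shared. The advantage of your direct version is that it avoids the ``append $J$'' step; the price is that you invoke the standard fact that for a generically stable $p$ over $M$ the cofinite average of \emph{any} Morley sequence of $p$ over $M$ equals $p$ itself. This is true and appears in~\cite{gstabstab}, but it is not stated in the present paper and does not follow immediately from the bare definition of generic stability quoted here (one has to argue, e.g., by extending the Morley sequence past the parameters of a candidate formula). If you were to expand this into a full write-up, it would be worth either citing that fact precisely or inserting the short extension argument.
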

\begin{proof}\*
  
\bigcircled 1  Let $\psi(y)\in q$, and let by hypothesis and compactness $\phi(x)\in p$ and $\rho(x,y)\in r$ be such that $\models \forall x,y\;\bigl((\phi(x)\land\rho(x,y))\implica\psi(y)\bigr)$. Lemma~\ref{lemma:fspartial} ensures the existence of $m^0, m^1\in M$ such that $\models \phi(m^0)\land \rho(m^0, m^1)$, and in particular $\models \psi(m^1)$.

\bigcircled 2  Work in $L(A)$. We want to show that for every $\psi(y;z^1)\in L(A)$ the set $d_q\psi\ssq S_{z^1}(A)$ is clopen; it is sufficient to show that $d_q\psi$ is open, as since $\psi$ is arbitrary then the complement $d_q(\neg \psi)$ of $d_q\psi$ will be open as well. Fix $d$ such that $q\proves \psi(y;d)$; we are going to find a formula $\delta(z^1)\in \tp(d/A)$ such that every element of $S_{z^1}(A)$ satisfying $\delta$ lies in $d_q\psi$, proving that  $\tp(d/A)$ is in the interior of $d_q\psi$.

Let $z\coloneqq z^0z^1$ and take $\phi(x;z^0)\in L(A)$, $e\in \monster$ and $\rho(x,y)\in r$  such that 
\[
  p\proves \underbrace{\phi(x;\underset{z^0}e)\land\forall y\;\Bigl(\bigl(\phi(x;\underset{z^0}e)\land\rho(x,y)\bigr)\implica\psi(y; \underset{z^1}d)\Bigr)}_{\eqqcolon \theta(x;ed)}
\]
 As $\theta(x;z)$ is an $L(A)$-formula and $p$ is definable over $A$,  the formula $\delta(z^1)\ceq (\exists z^0\;d_p\theta)(z^1)$  is as well over $A$. Suppose  $\tilde d\in \monster$ is such that $\models\delta(\tilde d)$, and let  $\tilde{e}\in \monster$ be such that  $\models d_p\theta(\tilde{e}, \tilde{d})$. By construction we have \[p\proves \phi(x,\tilde e)\land\forall y\;\Bigl(\bigl(\phi(x,\tilde e)\land\rho(x,y)\bigr)\implica\psi(y, \tilde d)\Bigr)\] and it follows that $p\cup \set\rho\proves \psi(y,\tilde d)$; therefore $\psi(y,\tilde d)\in q$.  As $\delta(z^1)\in \tp(d/A)$, we are done.

\bigcircled 3  Assume that $q$ is not generically stable over $M$, as witnessed by an $L(M)$-formula $\psi(y; w)$, some $\tilde d\in \monster^{\abs w}$, an ordinal $\alpha$ and a Morley sequence $\seq{\tilde b^i\mid i<\alpha}$ of $q$ over $M$ such that both  $I\coloneqq\set{i<\alpha\mid \models\neg \psi(\tilde b^i; \tilde d)}$ and $\alpha\setminus I$ are infinite and $\psi(y;\tilde d)\in q(y)$.

 By Fact~\ref{fact:gsfs} and Lemma~\ref{lemma:fspartial} $p\cup r$ is finitely satisfiable in $M$. Since $p\cup r\proves q$, the partial type  $p\cup r\cup q$ is finitely satisfiable in $M$ as well, and therefore extends to some $\hat r\in S(\monster)$  which is, again, finitely satisfiable in $M$, and in particular  $M$-invariant; take a Morley sequence $\seq{(a^i, b^i)\mid i\in I}$ of $\hat r$
  over $M$, let $f\in \aut(\monster/M)$ be such that $f(\seq{\tilde b^i\mid i\in I})=\seq{b^i\mid i\in I}$, and set $d\coloneqq f(\tilde d)$. Note that $p,q,r$ and $\psi(y;w)$ are fixed by $f$.
    
    Now let $J$ be a copy of $\omega$ disjoint from $I$ and let  $\seq{a^j\mid j\in J}$ realise a Morley sequence of $p$ over $Md\set{a^i\mid i\in I}$. We want to show that the concatenation of $\seq{a^i\mid i\in I}$ with $\seq{a^j\mid j\in J}$ contradicts generic stability of $p$ over $M$. By construction this is a Morley sequence over $M$, and if we find  $\chi(x;d)$ such that $\models \chi(a^i;d)$ holds for $i\in J$ but for no $i\in I$ then we are done, since  $I$ and $J$ are infinite.

As $\psi(y;d)\in q$ by $M$-invariance of $q$, there is by hypothesis $\phi(x,y)\in r$ such that $p(x)\proves \forall y\; \bigl(\phi(x,y)\implica \psi(y;d)\bigr)$. Let $\chi(x;d)$ be the last formula. By hypothesis, for $i\in J$ we have $\models \chi(a^i; d)$. On the other hand, for $i\in I$  we have  $(a^i,b^i)\models\phi(a^i, b^i)\land\neg \psi(b^i; d)$, and in particular  for all $i\in I$ we have $\models\neg \chi(a^i;d)$.
\end{proof}
\begin{rem}\label{rem:dfdgsbst}
  We are assuming that $p,q$ are $A$-invariant. It is \emph{not} true that if $p$ is finitely satisfiable/definable/generically stable in/over some $B\subseteq A$ then  $q$ must as well be such, for the same $B$. Even when $B=N\prec M=A$ are models, a counterexample can easily be obtained by taking $q$ to be the realised type of a point in $M\setminus N$. 
   \end{rem}
   \begin{question}\label{q:dwc}
     Is it true that in the setting of Remark~\ref{rem:dfdgsbst}  $q$ is domination-equivalent to a type finitely satisfiable/definable/generically stable in/over $N$?
   \end{question}
\begin{co}\label{co:invbarteqchange}
There is a theory $T$ where $\invtilde$ changes when passing to $T^\eq$.
\end{co}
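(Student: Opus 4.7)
I would prove this corollary by exhibiting a specific theory $T$ and explicitly comparing the monoid $(\invtilde, \otimes, \doms)$ computed in $T$ against the analogous object computed in $T^\eq$. The invariants supplied by Theorem~\ref{thm:propertiespreserved}—finite satisfiability, definability, and generic stability over prescribed small sets—give a concrete means of separating $\doms$-classes: if one exhibits an invariant type $q$ in $T^\eq$ (necessarily involving imaginary sorts) whose profile under these invariants differs from that of every real-sort invariant type in $T$, then $q$ cannot be $\doms$-equivalent to any type already visible in $T$, so its class is genuinely new in the $T^\eq$-version of $\invtilde$.

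Concretely, I would look for a stable theory whose imaginary sorts carry regular types not equivalent to any real-sort type. Using the structural description of $\invtilde$ for stable theories developed in Section~\ref{section:stabletheories} (in particular Theorem~\ref{thm:bigoplusN}), the monoid $\invtilde$ decomposes as a direct sum indexed by classes of regular types, so the appearance of an additional regular type in an imaginary sort contributes a new summand when passing to $T^\eq$. Verification of non-equivalence proceeds via Theorem~\ref{thm:propertiespreserved}: any real type dominating (or dominated by) the imaginary type would have to inherit a matching invariance profile, which one arranges to be impossible—for instance by arranging the imaginary type to be generically stable over a small set $A$ no real type can "match" after push-forward.

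The main obstacle is selecting the right example. For many of the most natural candidates (pure equality, simple equivalence relations with uniform classes, strongly minimal sets), every imaginary-sort type turns out to be both dominated by and dominating some real-sort type, because the fibers of definable projections to the imaginary sort are uniform in real-sort type; in such theories $\invtilde$ is unaffected by passage to $T^\eq$. The construction therefore must supply a theory whose definable quotients genuinely introduce regular types not reducible to the real sorts—this is exactly the situation for which Theorem~\ref{thm:propertiespreserved} becomes indispensable, since it turns the invariants into a direct tool for separating $\invtilde$ of $T$ from $\invtilde$ of $T^\eq$.
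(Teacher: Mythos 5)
You have identified the correct key lemma—Theorem~\ref{thm:propertiespreserved}—and the right general strategy: exhibit an imaginary-sort type whose profile under one of the preserved invariants cannot be matched by any real-sort type, so its class is genuinely new in $T^\eq$. But the direction you propose to search in is the one place it cannot succeed. You look for a \emph{stable} theory whose imaginary sorts carry new regular types, invoking Theorem~\ref{thm:bigoplusN}; yet Remark~\ref{rem:tteqstab}, which appears immediately after this corollary, shows that for stable $T$ every type in $T^\eq$ is equidominant with a type having all variables in the home sort (the projection dominates it, and a classical result of Poizat upgrades this to equidominance via a possibly forking extension). So in stable theories $\invtilde$ never changes when passing to $T^\eq$. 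Your observation that pure equality, uniform equivalence relations, and strongly minimal theories all fail to produce a counterexample is therefore not bad luck in choosing stable examples—it is a theorem.

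The paper's proof reverses your search. It fixes on the generic-stability clause of Theorem~\ref{thm:propertiespreserved} and deliberately picks an \emph{unstable} $T$ in which the home sort has no nonrealised generically stable type at all, while $T^\eq$ acquires one: the example is $(M,<,E)$ with $(M,<)\models\textsf{DLO}$ and $E$ an equivalence relation with infinitely many classes, each dense in the order. Every invariant home-sort type concentrates on cuts and so fails generic stability, but the imaginary sort $M/E$ carries no induced order and behaves like a pure infinite set, so its generic type is generically stable; by preservation of generic stability under domination, this imaginary type cannot be domination-equivalent to any home-sort type. To repair your proposal, discard the appeals to stability and thinness and instead arrange exactly this asymmetry in generic stability between the home sort and an imaginary sort.
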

\begin{proof}
  As generic stability is preserved by domination, this happens in any theory where $T$ does not have any nonrealised generically stable type but $T^\eq$ does, as such a type cannot be domination-equivalent to any type with all variables in the home sort. An example of such a theory is that of a structure $(M, <, E)$ where $(M,<)\models \textsf{DLO}$ and $E$ is an equivalence relation with infinitely many classes, all of which are dense.  
\end{proof}

  Such a thing cannot happen when passing from a \emph{stable} $T$ to $T^\eq$; see Remark~\ref{rem:tteqstab}.

  \begin{pr}\label{pr:gstabcommute}
  Generically stable types commute with every invariant type.
\end{pr}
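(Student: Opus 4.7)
The plan is a Morley sequence argument that reduces membership in the two products $p \otimes q$ and $q \otimes p$ to the same question about $q$. Pick a small set $A$ such that $p$ is generically stable over $A$ and $q$ is $A$-invariant, and consider an $L(\monster)$-formula $\phi(x,y;c)$; after enlarging $A$ to contain $c$ we may assume $\phi(x,y;c) \in L(A)$. Realize a Morley sequence $\seq{a^i \mid i<\omega}$ of $p$ over $A$ inside $\monster$, and let $b \models q$ in a larger monster.

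The first step is to rewrite the left-hand product via the average-type characterization of generic stability. By the defining property of generic stability, the set of $L(\monster)$-formulas true of cofinitely many $a^i$ forms a complete global type; a standard identification of this ``average'' with $p$ (both are $A$-invariant and agree on $A$, where $p\restr A = \tp(a^0/A)$) gives $\phi(x, b; c) \in p$ if and only if $\models \phi(a^i, b; c)$ for cofinitely many $i$. Unfolding the definition of $\otimes$ then yields that $\phi(x,y;c) \in p \otimes q$ iff $\models \phi(a^i, b; c)$ for cofinitely many $i$.

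The key is then a $0$-$1$ dichotomy: for each $i$, $\models \phi(a^i, b; c)$ is equivalent to $\phi(a^i, y; c) \in q$ because $b \models q$ and $a^i \in \monster$; and since all $a^i$ share the type $p\restr A$ over $A$, $A$-invariance of $q$ forces ``$\phi(a^i, y; c) \in q$'' to have the same truth value for every $i$. Hence $\phi(x,y;c) \in p \otimes q$ iff $\phi(a^0, y; c) \in q$. Similarly, $\phi(x,y;c) \in q \otimes p$ unfolds to $\phi(a, y; c) \in q$ for any (equivalently every) $a \models p$, and $A$-invariance of $q$ together with $\tp(a/A) = p\restr A = \tp(a^0/A)$ reduces this to the same condition $\phi(a^0, y; c) \in q$. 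The two products therefore coincide.

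The only real delicacy is bookkeeping: realizing $(a^i)$ inside $\monster$ makes each $\phi(a^i, y; c)$ an $L(\monster)$-formula whose $q$-membership is well-defined, while realizing $b$ in a larger monster makes sense of $b \models q$. The one background fact used, the identification of $p$ with the average type of its Morley sequence, is the standard consequence of the paper's definition of generic stability and is the engine powering the whole argument.
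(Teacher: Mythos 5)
Your overall strategy is the same one underlying the argument the paper defers to (Simon's Proposition~2.33): realise a Morley sequence of $p$ inside $\monster$ and use $A$-invariance of $q$ to force a $0$--$1$ law. The $0$--$1$ dichotomy step and the unfolding of $q\otimes p$ are fine. The gap is in the first step, where you claim that $\phi(x,y;c)\in p\otimes q$ iff $\models\phi(a^i,b;c)$ for cofinitely many $i$. Unfolding $\otimes$, this requires $\phi(x,b;c)\in p\invext \monster b$ iff $\models\phi(a^i,b;c)$ for cofinitely many $i$; but the generic stability hypothesis, as defined in the paper, only says that the \emph{$L(\monster)$-formulas} holding cofinitely often form a complete global type. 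Since you chose $b\models q$ in a larger monster, $\phi(x,b;c)$ is \emph{not} an $L(\monster)$-formula, so the average-type characterisation does not apply to it as stated. The side justification ``both are $A$-invariant and agree on $A$'' for identifying the average with $p$ is also not a valid argument on its own: two $A$-invariant global types extending the same type over $A$ need not coincide (e.g.\ the types at $+\infty$ and $-\infty$ in \textsf{DLO} over $A=\emptyset$).

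Your claimed equivalence is in fact true, but proving it requires an extra step you have skipped. By $A$-invariance of $p$, $\phi(x,b;c)\in p\invext\monster b$ iff $\phi(x,\tilde b;c)\in p$ for some (equivalently any) $\tilde b\in\monster$ with $\tilde b\models q\restr A$. To connect this with $\models\phi(a^i,b;c)$ you need to choose $\tilde b$ carefully: take $\tilde b\in\monster$ realising $q\restr A(a^j)_{j<\omega}$, not merely $q\restr A$. Then $\phi(a^i,\tilde b;c)$ holds iff $\phi(a^i,y;c)\in q$ iff $\models\phi(a^i,b;c)$, and now the average characterisation applies to the genuine $L(\monster)$-formula $\phi(x,\tilde b;c)$, closing the gap. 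Equivalently, one can realise $b$ \emph{inside} $\monster$ from the start (realising $q\restr A(a^j)_{j<\omega}$ rather than the full $q$), which is what the standard proof does; choosing $b$ outside $\monster$ and realising all of $q$ is precisely what makes the crucial formula escape the scope of the generic stability definition.
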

\begin{proof}
The proof of~\cite[Proposition~2.33]{simon} goes through even without assuming \textsf{NIP} provided the definition of  ``generically stable'' is the one above.  
\end{proof}

Even if $(\invtilde, \otimes)$ need not be well-defined in general, a smaller object is.
  \begin{defin}
    Let $\gsinvtilde$ be the quotient by $\domeq$ of the space of types which are products of generically stable types.
  \end{defin}
  \begin{co}\label{co:gsinvtilde}
    $(\gsinvtilde, \otimes, \doms)$  is a well-defined, commutative ordered monoid.
  \end{co}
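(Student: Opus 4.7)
\medskip

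The plan is to reduce well-definedness to the commutativity phenomenon supplied by Proposition~\ref{pr:gstabcommute}, since left-compatibility with $\doms$ is already given by Lemma~\ref{lemma:atleastontheleft}. First, I would observe that the set $G\ssq \invtypes(\monster)$ of types which are products of generically stable types is closed under $\otimes$, and that every realised type (in particular the canonical $0$-type) belongs to $G$ and so the class $\class 0$ lives in $\gsinvtilde$.

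Next, I would show that if $p\in G$ and $q\in \invtypes(\monster)$ is any invariant type, then $p\otimes q=q\otimes p$. This is an easy induction on the number of generically stable factors of $p=p_0\otimes\dots\otimes p_{n-1}$: Proposition~\ref{pr:gstabcommute} gives the base case $n=1$, and associativity of $\otimes$ together with the inductive hypothesis lets one move $q$ past the factors one at a time. Consequently $\otimes$ restricted to $G$ is commutative, and more generally each element of $G$ commutes with every element of $\invtypes(\monster)$.

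Now I would verify that $\otimes$ respects $\doms$ on $G$. For the left factor this is exactly Lemma~\ref{lemma:atleastontheleft}; for the right factor, given $p\in G$ and $q_0,q_1\in G$ with $q_0\doms q_1$, commutativity from the previous step yields
\[
p\otimes q_0\;=\;q_0\otimes p\;\doms\;q_1\otimes p\;=\;p\otimes q_1,
\]
where the middle step is Lemma~\ref{lemma:atleastontheleft} again. Exactly the same argument (replacing $\doms$ by $\domeq$ in the conclusion) shows that $\domeq$ is a congruence on $G$ with respect to $\otimes$.

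Finally, I would assemble the monoid structure on $\gsinvtilde$. Closure of $G$ under $\otimes$ gives a semigroup $(G,\otimes)$; the previous step makes $\domeq$ a congruence so that $\otimes$ descends to the quotient, and $\doms$ likewise descends to a partial order compatible with $\otimes$ (the argument of Corollary~\ref{co:criticalpart} applies verbatim to $G$). The class $\class 0$ is a two-sided neutral element by Lemma~\ref{lemma:ignorerealised}, and commutativity of the quotient is inherited from Step~2. There is no hard step here: the only conceptual input is Proposition~\ref{pr:gstabcommute}, whose role is precisely to bypass the obstruction to right-compatibility of $\doms$ with $\otimes$ that caused $\invtilde$ to fail well-definedness in general.
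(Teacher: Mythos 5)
Your proposal is correct and follows essentially the same route as the paper: both derive right-compatibility of $\otimes$ with $\doms$ on products of generically stable types by combining left-compatibility (Lemma~\ref{lemma:atleastontheleft}) with the commutativity supplied by Proposition~\ref{pr:gstabcommute}, and both obtain commutativity of the quotient from the fact that the generators commute. You simply spell out the induction-on-factors and the verification of the monoid axioms that the paper leaves implicit.
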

  \begin{proof}
    It follows immediately from Lemma~\ref{lemma:atleastontheleft} and Proposition~\ref{pr:gstabcommute} that, when restricting to the set of products of generically stable types, $\domeq$  is a congruence with respect to $\otimes$. As the generators of $\gsinvtilde$ commute, so does every pair of elements from it.
  \end{proof}

The reason we defined  $\gsinvtilde$ as  above is that  generic stability is not preserved under products: the type $p$ in~\cite[Example~1.7]{gstabstab} is generically stable but $p\otimes p$ is not.

  $\gsinvtilde$ may be significantly smaller than $\invtilde$, and even be reduced to a single point; this happens for instance in the Random Graph, or in \textsf{DLO}.

\subsection{Weak Orthogonality}

Another property preserved by domination is weak orthogonality to a type. This generalises (by Proposition~\ref{pr:sameasclassic}) a classical result in stability theory, see e.g.~\cite[Proposition~C.13'{}'{}'(iii)]{makkai}.

\begin{defin}\label{defin:wort}
  We say that $p\in S_x(\monster)$ and $q\in S_y(\monster)$ are \emph{weakly orthogonal}, and write $p\wort q$, iff $p\cup q$ is a complete global type. 
\end{defin}
Note that if $p$ is invariant then  $p\wort q$ is equivalent to $p\cup q\proves p\otimes q$, or in other words to the fact that for any $c\models q$ in some $\monster_1\satext \monster$ we have  $p\proves p\invext \monster c$.

In the literature  the name \emph{orthogonality} is sometimes (e.g.~\cite[p.~136]{simon} or~\cite[p.~310]{tanovic}) used to refer to the restriction of weak orthogonality to global invariant types. We will not adopt this convention here.

\begin{pr}\label{pr:wortpreserved}
  Suppose that $p_0, p_1\in \invtypes(\monster)$ are such that $p_0\doms p_1$ and $p_0\wort q$. Then $p_1\wort q$.
\end{pr}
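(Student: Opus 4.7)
The plan is to reduce the statement to the equivalent reformulation noted immediately after Definition~\ref{defin:wort}: since $p_0$ and $p_1$ are invariant, $p_i \wort q$ is equivalent to the assertion that, for any $c \models q$ taken in some $\monster_1 \satext \monster$, one has $p_i \proves p_i \invext \monster c$. So fix such a $c$ and, enlarging $A$ if necessary, assume $p_0, p_1 \in \invtypes(\monster, A)$ and $r \in S_{p_0 p_1}(A)$ witnesses $p_0 \cup r \proves p_1$. The goal becomes: for every $\chi(y, c) \in p_1 \invext \monster c$, one has $p_1 \proves \chi(y, c)$.

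The key tool is Lemma~\ref{lemma:rsufficesforinvext}, applied with $B = \monster c$: it yields $(p_0 \invext \monster c) \cup r \proves p_1 \invext \monster c$. Combining this with the hypothesis $p_0 \wort q$, which gives $p_0 \proves p_0 \invext \monster c$, we obtain $p_0(x) \cup r(x, y) \proves p_1 \invext \monster c$, and in particular $p_0 \cup r \proves \chi(y, c)$.

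By compactness, there exist $\phi(x) \in p_0$ and $\rho(x, y) \in r$ such that $\phi(x) \land \rho(x, y) \proves \chi(y, c)$ (modulo the ambient elementary diagram). Define the $L(\monster)$-formula $\sigma(y) \coloneqq \exists x\, (\phi(x) \land \rho(x, y))$; then $\sigma(y) \proves \chi(y, c)$, so it will suffice to show that $\sigma \in p_1$. For this, use consistency of $p_0 \cup r$ (built into the definition of $\doms$) to pick $(a, b) \models p_0 \cup r$. Then $b \models p_1$, because $p_0 \cup r \proves p_1$, while $b$ satisfies $\sigma$ with witness $a$; since $p_1$ is a complete global type and $\sigma \in L(\monster)$, it follows that $\sigma \in p_1$, whence $p_1 \proves \chi(y, c)$, as required.

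The only place requiring care is the bookkeeping of parameters when passing to $\monster c$ — in particular making sure $\sigma$ still lies in $L(\monster)$ (and not merely $L(\monster c)$), so that completeness of the global type $p_1$ can be invoked to decide it. Once Lemma~\ref{lemma:rsufficesforinvext} is in hand, everything else is a routine compactness-plus-completeness argument, and I do not anticipate genuine obstacles.
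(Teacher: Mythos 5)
Your argument is correct and follows essentially the same route as the paper's: apply Lemma~\ref{lemma:rsufficesforinvext} to push $r$ up to $\monster c$, use $p_0 \wort q$ to replace $p_0 \invext \monster c$ by $p_0$, then extract an $L(\monster)$-formula $\exists x\,(\phi \land \rho)$ by compactness and conclude via completeness of $p_1$ evaluated at a realisation of $p_0 \cup r$. Your write-up is if anything slightly more explicit than the paper's at the final step (where the paper's phrasing around consistency of $p_1 \cup r$ is a bit terse), but the substance is identical.
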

\begin{proof}
  Fix $\monster_1\satext \monster$, work in  its elementary diagram and  suppose $p_0(x)\cup r(x,y)\proves p_1(y)$. We have to show that for any $c\in\monster_1$ realising $q$ we have  $p_1\proves p_1\invext \monster c$. By hypothesis, $p_0\proves p_0\invext \monster c$, and by Lemma~\ref{lemma:rsufficesforinvext} we have $(p_0\invext \monster c)\cup r \proves p_1\invext \monster c$, therefore $p_0 \cup r \proves p_1\invext \monster c$. This means that, for any $\psi(y,z)\in L(\monster)$  such that  $\psi(y,c)\in p_1\invext \monster c$, there are $\phi(x)\in p_0$ and  $\rho(x,y)\in r$ such that $\monster_1\models \forall x,y\; \bigl((\phi(x)\land\rho(x,y))\implica \psi(y,c)\bigr)$,  therefore \[\monster_1\models \forall y\; \bigl(
\bigl(    \exists x\;
    (\phi(x)\land\rho(x,y))\bigr)\implica \psi(y,c)
        \bigr)\]
  As $p_1(y)\cup r(x,y)$ is consistent, since it is satisfied by any realisation of $p_0(x)\cup r(x,y)$ by hypothesis, we have $p_1(y)\proves \exists x\; (\phi(x)\land\rho(x,y))$, and the conclusion follows.
\end{proof}
This entails the following slight generalisation of~\cite[Theorem~10.23]{poizat}.
\begin{co}\label{co:daptr}
  Let $p_x,q_y\in\invtypes(\monster)$. If $p\doms q$ and $p\wort q$, then $q$ is realised.
\end{co}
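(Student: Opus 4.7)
The plan is to reduce the claim to weak self-orthogonality of $q$ via Proposition~\ref{pr:wortpreserved}, and then to observe that a nonrealised invariant type cannot be weakly orthogonal to itself. Let $y'$ be a tuple of variables disjoint from both $x$ and $y$, and write $q(y')$ for $q$ with its variables renamed. The hypothesis $p(x)\wort q(y)$ reads $p(x)\wort q(y')$ after this trivial renaming, so Proposition~\ref{pr:wortpreserved} applied with $p_0 := p$, $p_1 := q$, and third type $q(y')$ (its hypothesis $p_0\doms p_1$ being our $p\doms q$) yields $q(y)\wort q(y')$; equivalently, $q(y)\cup q(y')$ is a complete global type in the variables $yy'$.

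To conclude, I would argue by contradiction that $q$ must be realised. Assume $q$ is not realised, and fix $A\smallsubset\monster$ over which $q$ is invariant. In a sufficiently saturated $\monster_1\satext\monster$ pick $b\models q$; then $(b,b)$ realises $q(y)\cup q(y')\cup\set{y=y'}$, so this formula is consistent with $q(y)\cup q(y')$. For the opposite inequality, let $c\models q\invext\monster b$. Because $q$ is nonrealised, no $d\in\monster$ satisfies $y=d\in q$, so $(d_q(y=z))(z)$ is empty; by the criterion for $q\invext\monster b$ recalled in the excerpt, $y=b\notin q\invext\monster b$, i.e.\ $y\ne b\in q\invext\monster b$, so $c\ne b$. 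Hence $(b,c)$ realises $q(y)\cup q(y')\cup\set{y\ne y'}$. Thus the formula $y=y'$ is undecided by $q(y)\cup q(y')$, contradicting its completeness.

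The only delicate point is the bookkeeping in the first step: one must correctly align the roles of $x$, $y$, and the fresh copy $y'$ to apply Proposition~\ref{pr:wortpreserved} in the ``self-orthogonality'' configuration. After that, the argument that a nonrealised $A$-invariant type splits $y=y'$ in two consistent ways over itself is immediate from the definition of $\invext$.
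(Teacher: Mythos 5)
Your proof is correct and follows essentially the same route as the paper's: first apply Proposition~\ref{pr:wortpreserved} with $p_0 = p$, $p_1 = q$ and the ``test'' type a renamed copy of $q$ to conclude $q \wort q$, then observe that a nonrealised type cannot be weakly orthogonal to itself because $q(y)\cup q(y')$ fails to decide $y=y'$. The paper states the second step without elaboration; your appeal to the $d_q$-criterion to produce a realisation of $q\invext\monster b$ distinct from $b$ is a valid and slightly more explicit justification of the same fact.
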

\begin{proof}
  From $p\doms q$ and $p\wort q$ the previous proposition gives $q\wort q$. But this can only happen if $q$ is realised, otherwise  $q(x)\cup q(y)\cup \set{x=y}$ and   $q(x)\cup q(y)\cup \set{x\ne y}$ are both consistent.
\end{proof}
\begin{rem}
  Tanovi\'c has proved in~\cite[Theorem~4.4]{tanovic} that if $p$ is strongly regular (see~\cite[Definition~2.2]{tanovic}) and generically stable then $p$ is $\le_{\mathrm{RK}}$-minimal among the nonrealised types, and for all invariant $q$ we have $p\nwort q\iff p\le_{\mathrm{RK}} q$. An immediate consequence of his result and of the previous corollary is that such types are also $\domd$-minimal among the nonrealised types.
\end{rem}

We conclude this section by remarking that a lot of properties are \emph{not} preserved by domination-equivalence, nor by equidominance. For instance, there is an $\omega$-stable theory with two equidominant types of different Morley rank, namely $T^\eq$ where $T$ is the theory of an equivalence relation with infinitely many classes, all of which are infinite. Another property that is not preserved is having the same dp-rank, a counterexample being \textsf{DLO}, where if $p$ is, say, the type at $+\infty$ we have $p\equidom p\otimes p$ even if the former has dp-rank $1$ and the latter has dp-rank $2$.

\section{Dependence on the Monster Model}\label{section:depmon}
In strongly minimal theories (see Example~\ref{eg:strminN}) $\invtilde\cong \mathbb N$ regardless of $\monster$ while in, say, the Random Graph, $\invtilde$ is very close to $\invtypes(\monster)$ by Proposition~\ref{pr:rgdegdom} and the subsequent discussion: the former is obtained from the latter by identifying types that only differ because of realised, duplicate, or permuted coordinates. It is natural to ask whether and how much the quotient $\invtilde$ depends on $\monster$, and the question makes sense even when $\otimes$ does not respect $\doms$.
This section investigates this matter.

\subsection{Theories with IP}
 The preorder $\doms$ is the result of a series of generalisations that began in~\cite{firstdefinition} with starting point the Rudin-Keisler order on ultrafilters. It is not surprising therefore that some classical arguments involving the latter object generalise as well.
We show in this subsection (Proposition~\ref{pr:ipinvtidlesize}) that, in the case of theories with \textsf{IP} (see~\cite[Chapter~2]{simon}), one of them is the  abundance of pairwise Rudin-Keisler inequivalent ultrafilters on $\mathbb N$; the classical proof goes through for $\domeq$ as well, and shows that even the cardinality of $\invtilde$ depends on $\monster$.

In this subsection $\class p$ stands for the $\domeq$-class of $p$. Even if we state everything for $\domeq$ and its quotient $\invtilde$, the same arguments work if we replace $\domeq$ by $\equidom$, $\invtilde$ by $\invbar$ and interpret $\class p$ as the class of $p$ modulo $\equidom$.

The following result is classical, see e.g.~\cite[Exercise~4(a) of Section~10.1 and Theorem~10.2.1]{hodges}.
\begin{fact}
  Let $T$ be any theory and $\lambda\ge \abs{T}$. Then $T$ has a $\lambda^+$-saturated and $\lambda^+$-strongly homogeneous model of cardinality at most $2^\lambda$.
\end{fact}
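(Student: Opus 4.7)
The plan is a standard transfinite chain construction. Start from any $M_0\models T$ of cardinality $\le\lambda$, available by downward L\"owenheim--Skolem since $|T|\le\lambda$. Recursively build an elementary chain $(M_\alpha)_{\alpha<\lambda^+}$: at each successor stage pass to an elementary extension $M_{\alpha+1}\succ M_\alpha$ in which every type in $S(A)$ is realised for every $A\subseteq M_\alpha$ with $|A|\le\lambda$, and at limits take unions. The number of such pairs $(A,p)$ is bounded by $|M_\alpha|^\lambda\cdot 2^\lambda\le 2^\lambda$, so downward L\"owenheim--Skolem lets us keep $|M_{\alpha+1}|\le 2^\lambda$; using $\lambda^+\le 2^\lambda$ (Cantor), the union $M\ceq\bigcup_{\alpha<\lambda^+}M_\alpha$ has cardinality at most $2^\lambda$.

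The $\lambda^+$-saturation of $M$ falls out of $\cof(\lambda^+)=\lambda^+>\lambda$: any $A\subseteq M$ with $|A|\le\lambda$ is contained in some $M_\alpha$, so any $p\in S(A)$ is realised in $M_{\alpha+1}\subseteq M$. To obtain $\lambda^+$-strong homogeneity, I would augment the successor step with the further requirement that, for every partial elementary $f\from A\to B$ with $A,B\subseteq M_\alpha$, $|A|\le\lambda$, and every $c\in M_\alpha$, there are $d,d'\in M_{\alpha+1}$ making $f\cup\set{(c,d)}$ and $f\cup\set{(d',c)}$ elementary; this is again captured by at most $2^\lambda$ many instances, so the cardinality bound survives. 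A transfinite back-and-forth of length $|M|$, enumerating $M$ as $(m_\xi)_{\xi<|M|}$, then extends any partial elementary map of size $\le\lambda$ to an automorphism of $M$, by invoking the built-in extension property at each stage to pick the next image or preimage.

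The main obstacle is precisely the strong homogeneity part, not the saturation. Whenever $|M|>\lambda^+$ (the typical case when $T$ admits $2^\lambda$ many types over some set of size $\lambda$), $\lambda^+$-saturation alone is insufficient to drive the back-and-forth past the stage at which the partial map's domain attains cardinality $\lambda^+$. The remedy is to bake into the chain construction the one-step extensibility of partial elementary maps between small subsets, so that the back-and-forth can always be advanced by falling back on the chain; an alternative is to appeal to the existence of a special model of cardinality $2^\lambda$, which is strongly homogeneous to the required degree.
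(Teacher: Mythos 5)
The paper offers no proof of this Fact; it simply cites Hodges (Exercise~4(a) of Section~10.1 and Theorem~10.2.1), so there is no ``paper proof'' to compare against. Evaluating your attempt on its own merits: the saturation half is correct and is the standard argument, but the strong-homogeneity half has a genuine gap, one you yourself half-diagnose but do not actually close. Your ``one-step extensibility'' requirement---that for every partial elementary $f\colon A\to B$ with $|A|\le\lambda$ and every $c\in M_\alpha$ there exist $d,d'\in M_{\alpha+1}$ making $f\cup\{(c,d)\}$ and $f\cup\{(d',c)\}$ elementary---is already a consequence of the saturation clause (realise $f_*(\tp(c/A))$ over $B$, a set of size $\le\lambda$), so it adds nothing. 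It therefore cannot rescue the final back-and-forth: after $\lambda^+$ steps of that back-and-forth the partial map has size $\ge\lambda^+$, and nothing in the construction lets you realise types over sets of that size. Your alternative of invoking a special model of cardinality $2^\lambda$ also does not go through in general: special models exist only at limit cardinals $\kappa$ with $\sum_{\mu<\kappa}2^\mu=\kappa$, and $2^\lambda$ can be a successor (e.g.\ under GCH); meanwhile a saturated model of cardinality $2^\lambda$ need not exist either when $2^{\lambda^+}>2^\lambda$.

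The repair is to abandon the single back-and-forth at the end and instead extend partial elementary bijections \emph{along the chain}, using homogeneity of an ambient structure rather than saturation of $M$. Concretely: fix a monster $\mathfrak C$ that is sufficiently saturated and strongly homogeneous (its existence is prior and does not need the cardinality bound), and build $(M_\alpha)_{\alpha<\lambda^+}$ inside $\mathfrak C$ with $|M_\alpha|\le 2^\lambda$. Maintain, for each ``task'' $f\colon A\to B$ ($A,B\subseteq M_{\alpha_0}$, $|A|\le\lambda$) registered at stage $\alpha_0$, an increasing chain $(\tilde f_\alpha)_{\alpha\ge\alpha_0}$ of partial elementary bijections with $\tilde f_{\alpha_0}=f$, $\operatorname{dom}\tilde f_\alpha,\operatorname{ran}\tilde f_\alpha\subseteq M_\alpha$, and $M_\alpha\subseteq\operatorname{dom}\tilde f_{\alpha+1}\cap\operatorname{ran}\tilde f_{\alpha+1}$. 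To pass from $\tilde f_\alpha$ to $\tilde f_{\alpha+1}$, extend $\tilde f_\alpha$ to an automorphism $\sigma$ of $\mathfrak C$ and set $\tilde f_{\alpha+1}\coloneqq\sigma\restr\bigl(\operatorname{dom}\tilde f_\alpha\cup M_\alpha\cup\sigma^{-1}(M_\alpha)\bigr)$; this has size at most $|M_\alpha|\le 2^\lambda$ and the needed elements can be absorbed into $M_{\alpha+1}$. There are at most $2^\lambda$ tasks active at any stage, so $|M_{\alpha+1}|\le 2^\lambda$ survives. Then for each task, $\bigcup_\alpha\tilde f_\alpha$ is an automorphism of $M=\bigcup_\alpha M_\alpha$ extending $f$, giving $\lambda^+$-strong homogeneity. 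The point you were missing is that the maps being extended along the chain are \emph{not} small, so saturation of $M$ (or of the $M_\alpha$) is the wrong tool; one must appeal to the homogeneity of the ambient model, and the cardinality accounting still closes.
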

For the rest of this subsection, let  $\monster$ be $\lambda^+$-saturated and $\lambda^+$-strongly homogeneous of cardinality at most $2^{\lambda}$, let $\sigma$ be the least cardinal such that $\monster$ is not $\sigma^{+}$-saturated, and let $\kappa=\abs{\monster}$. Thus $\lambda^+\le \sigma\le \kappa\le 2^\lambda$. 
\begin{lemma}\label{lemma:sizeofclasses}
  In the notations above, for every $p\in \invtypes(\monster)$ we have $\abs{\class p}\le\abs{\set{q\mid q\domd p}}\le \ka^{<\sigma}$.
\end{lemma}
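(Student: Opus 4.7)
The first inequality is trivial: if $q \in \class p$ then $q \domeq p$, so in particular $p \doms q$, which is the same as $q \domd p$. Thus $\class p \subseteq \set{q \mid q \domd p}$.

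For the second inequality, I will construct an injection from $\set{q \mid q \domd p}$ into the collection of pairs $(A, r)$ with $A \smallsubset \monster$ and $r \in S_{<\omega}(A)$, and then count such pairs. For each $q$ satisfying $p \doms q$, Definition~\ref{defin:domination} provides a small $A_q$ and a type $r_q \in S_{pq}(A_q)$ with $p \cup r_q \proves q$; fix one such choice for every $q$ and send $q \mapsto (A_q, r_q)$. Injectivity is the crux of the argument and I expect it to follow immediately: if $(A_q, r_q) = (A_{q'}, r_{q'}) = (A, r)$, then since $r$ is a type in a fixed pair of variable tuples, $q$ and $q'$ are in the same variables $y$; and since $p \cup r$ is consistent and implies both complete global types $q, q' \in S_y(\monster)$, they must agree on every $L(\monster)$-formula in $y$, forcing $q = q'$.

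To count the pairs, note first that the number of small subsets of $\monster$ is at most $\sum_{\mu < \sigma} \ka^\mu = \ka^{<\sigma}$. For a fixed small $A$, $\sigma$-saturation of $\monster$ guarantees that every $r \in S_n(A)$ is realised by some tuple in $\monster^n$, which gives $\abs{S_n(A)} \le \ka^n = \ka$ for finite $n$; summing over $n < \omega$ yields at most $\aleph_0 \cdot \ka = \ka$ small types over $A$. The total number of pairs is thus bounded by $\ka^{<\sigma} \cdot \ka = \ka^{<\sigma}$, completing the estimate.

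The argument is essentially a clean counting exercise once injectivity is in place, and there is no real obstacle: the only mild subtlety is to remember that the variable tuple $y$ of $q$ is encoded in $r_q$, so that two distinct $q$'s with different arities cannot collide in the map.
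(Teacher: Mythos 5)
Your proof is correct and follows essentially the same route as the paper's: both establish the first inclusion trivially, then observe that the map $q \mapsto r_q$ (you add $A_q$ explicitly, which is implicit in the paper) is injective because $p \cup r_q$ determines $q$, and then count small types. The only cosmetic difference is in the last step: you use $\sigma$-saturation of $\monster$ to bound $|S_n(A)| \le \kappa$ for each fixed small $A$, whereas the paper uses the crude bound $|S(A)| \le 2^{<\sigma}$ and then the cardinal arithmetic $\kappa^{<\sigma} \cdot 2^{<\sigma} = \kappa^{<\sigma}$; both land in the same place.
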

\begin{proof}
Clearly $\class p\subseteq \set{q\mid q\domd p}$.  For every $q\domd p$, there is some small $r_q$ such that $p\cup r_q\proves q$. If $r_q=r_{q'}$ then $q=q'$,  and therefore $\abs{\set{q\mid q\domd p}}$ is bounded by the number of small types. As ``small'' means of cardinality strictly less than $\sigma$, the number of such types is at most the size of $\bigcup_{A\subset \monster, \abs A<\sigma}S(A)$, which cannot exceed $\ka^{<\sigma}\cdot 2^{<\sigma}=\ka^{<\sigma}$.
\end{proof}

\begin{co}
  The same bound applies to sets of the form $\set{\class q\mid \class q\domd \class p}$, for a fixed $p$.
\end{co}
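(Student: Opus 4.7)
The plan is essentially to observe that the set in question is a quotient of the set bounded in Lemma~\ref{lemma:sizeofclasses}, and that quotients can only decrease cardinality. More precisely, I would first check that the preorder $\doms$ descends to a well-defined relation on $\invtilde$: if $q \domeq q'$, then $q \domd q'$ and $q' \domd q$, so by transitivity of $\doms$ we have $q \domd p$ iff $q' \domd p$ for any fixed $p$, and likewise on the left. Hence the statement $\class q \domd \class p$ is unambiguous and holds exactly when $q \domd p$ holds for some (equivalently, every) representative of the class.

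Once this is granted, the quotient map $q \mapsto \class q$ restricts to a surjection
\[
\set{q \in \invtypes(\monster) \mid q \domd p} \twoheadrightarrow \set{\class q \mid \class q \domd \class p},
\]
so the cardinality of the right-hand side is bounded by that of the left-hand side. Applying the second inequality of Lemma~\ref{lemma:sizeofclasses} gives the stated bound $\ka^{<\sigma}$.

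There is no real obstacle to this argument; the substance has already been done in Lemma~\ref{lemma:sizeofclasses}, where each $q \domd p$ was coded by a small type $r_q$ witnessing $p \cup r_q \proves q$, and the number of such small types was counted. The corollary is just the observation that passing from types to domination-equivalence classes cannot increase this count.
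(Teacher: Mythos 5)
Your argument is correct and is precisely the immediate justification the paper leaves implicit: $\doms$ induces a well-defined order on $\domeq$-classes (a standard fact about the equivalence relation associated to a preorder), and the quotient map restricts to a surjection from $\set{q \mid q \domd p}$ onto $\set{\class q \mid \class q \domd \class p}$, so the bound $\ka^{<\sigma}$ from Lemma~\ref{lemma:sizeofclasses} carries over.
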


\begin{lemma}\label{lemma:ipkazs}
  If $T$ has \tf{IP}, then $2^\lambda=\kappa=2^{<\sigma}=\ka^{<\sigma}$.
\end{lemma}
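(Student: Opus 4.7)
The chain of inequalities $\sigma \le \kappa \le 2^\lambda$ together with $\lambda < \sigma$ (because $\lambda^+ \le \sigma$) immediately gives $2^\lambda \le 2^{<\sigma}$, since $2^\lambda$ is one of the terms in the supremum $2^{<\sigma} = \sup_{\mu < \sigma} 2^\mu$. The real content is to establish the converse $\kappa \ge 2^{<\sigma}$, which is where the \textsf{IP} hypothesis enters, and then to derive $\kappa^{<\sigma} = \kappa$ by routine cardinal arithmetic.

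The first step is to observe that $\monster$ is $\sigma$-saturated: by definition of $\sigma$, for every $\tau < \sigma$ the model $\monster$ is $\tau^+$-saturated, so every type over a parameter set of size $\le \tau$ is realised, i.e.\ every type over a set of size $< \sigma$ is realised. Next, fix a formula $\phi(x;y) \in L$ witnessing \textsf{IP}. For each cardinal $\mu < \sigma$, compactness and the indiscernible-extraction argument provide an $L$-theory asserting the existence of a sequence $(b_i)_{i<\mu}$ such that for every $S \subseteq \mu$ the partial type $\{\phi(x;b_i) : i \in S\} \cup \{\neg\phi(x;b_i) : i \notin S\}$ is consistent. This theory is consistent (by compactness and finitary \textsf{IP}) and has size $\le \mu + \aleph_0 < \sigma$, so by $\sigma$-saturation it is realised in $\monster$. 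The resulting sequence lies in a parameter set $B \subseteq \monster$ of size $\mu$, and each of the $2^\mu$ choices of $S$ yields a distinct type in $S(B)$; by $\sigma$-saturation again, each such type is realised in $\monster$. Hence $\kappa = |\monster| \ge 2^\mu$ for every $\mu < \sigma$, so $\kappa \ge 2^{<\sigma}$.

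Combining the two estimates yields $2^{<\sigma} \le \kappa \le 2^\lambda \le 2^{<\sigma}$, so all three quantities coincide. For the final equality $\kappa^{<\sigma} = \kappa$, fix $\mu < \sigma$ and compute
\[
  \kappa^\mu = (2^{<\sigma})^\mu = \sup_{\nu < \sigma} 2^{\nu \cdot \mu} = \sup_{\nu < \sigma} 2^{\max(\nu,\mu)} \le 2^{<\sigma} = \kappa,
\]
using that for infinite cardinals $\nu\cdot\mu = \max(\nu,\mu)$, and that $\max(\nu,\mu) < \sigma$ whenever $\nu,\mu < \sigma$. Taking the supremum over $\mu < \sigma$ gives $\kappa^{<\sigma} \le \kappa$, while the reverse inequality is trivial.

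The main obstacle to be careful about is the \textsf{IP}-to-saturation step: one must justify that the witnessing indiscernible sequence of length $\mu$ and all $2^\mu$ of the associated types actually live in $\monster$, which is exactly what $\sigma$-saturation (applied twice, once to the sequence and once to each type over it) delivers. Everything else is bookkeeping with the inequalities $\lambda < \sigma \le \kappa \le 2^\lambda$.
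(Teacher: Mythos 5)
Your proof is correct and takes essentially the same route as the paper's sketch: fix a formula witnessing \textsf{IP}, use $\lambda^+$-saturation (and more generally $\mu^+$-saturation for $\mu<\sigma$) to place $2^\mu$-many distinct types inside $\monster$, and then close with cardinal arithmetic; the only presentational difference is that the paper embeds a small model of cardinality $\mu$ by universality while you realise the independence pattern directly, which comes to the same thing. One small caveat on the arithmetic: the step $(2^{<\sigma})^\mu = \sup_{\nu<\sigma} 2^{\nu\cdot\mu}$ is \emph{not} a general cardinal-arithmetic identity (exponentiation does not commute with suprema; e.g.\ $\aleph_\omega^{\aleph_0}>\sup_n\aleph_n^{\aleph_0}$ under suitable hypotheses), and it is valid here only because you have already shown $2^{<\sigma}=2^\lambda$, so the supremum is attained. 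It is cleaner, and avoids the appearance of invoking a false identity, to write directly $\kappa^\mu=(2^\lambda)^\mu=2^{\lambda\cdot\mu}=2^{\max(\lambda,\mu)}\le 2^{<\sigma}=\kappa$ for each $\mu<\sigma$, which is what the paper's ``cardinal arithmetic'' amounts to.
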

\begin{proof}
  If $\phi(x;y)$ witnesses \tf{IP}, then over a suitable model of cardinality $\lambda$, which we may assume to be embedded in $\monster$, there are  $2^\lambda$-many $\phi$-types, and a fortiori types. This gives the first equality, and the same argument with any $\mu$ such that $\lambda\le \mu<\sigma$ gives the second one. The third one follows by cardinal arithmetic. 
\end{proof}
Recall the following property of theories with \textsf{IP}.
\begin{fact}\label{fact:manycoheirs}
  If $T$ has \tf{IP}, then for every $\lambda\ge\abs T$ there is a type $p$ over some $M\models T$ such that $\abs M=\lambda$ and $p$ has $2^{2^\lambda}$-many $M$-invariant extensions. Moreover, such extensions can be chosen to be over any $\lambda^+$-saturated model.
\end{fact}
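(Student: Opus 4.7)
The plan is to use \tf{IP} to produce a set $A\ssq M$ of size $\lambda$ shattered by some formula, then to build $M$-invariant types as ``averages'' of $A$ with respect to ultrafilters on $\lambda$, and finally apply a cardinal-arithmetic pigeonhole to collect those extensions which restrict to a common type over $M$.

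First, I would fix $\phi(x;y)$ witnessing \tf{IP}. By compactness there is a sequence $(a_i)_{i<\lambda}$, which can be arranged inside a model $M\models T$ with $\abs M=\lambda$, such that for every $X\ssq\lambda$ the partial type
\[
\Sigma_X(y)\coloneqq\set{\phi(a_i;y)\mid i\in X}\cup\set{\neg\phi(a_i;y)\mid i\in\lambda\setminus X}
\]
is consistent. Given any $\lambda^+$-saturated $N\succ M$, each $\Sigma_X$ is realised by some $b_X\in N$ by $\lambda^+$-saturation. For each ultrafilter $\mc U$ on $\lambda$ define $p_{\mc U}\in S(N)$ by declaring $\psi(x)\in p_{\mc U}$ if and only if $\set{i<\lambda\mid\models\psi(a_i)}\in\mc U$. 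This is a complete type because $\mc U$ is an ultrafilter; it is finitely satisfiable in $\set{a_i\mid i<\lambda}\ssq M$ (any finite conjunction of formulas in $p_{\mc U}$ corresponds, by closure of $\mc U$ under finite intersection, to a non-empty set of indices), hence finitely satisfiable in $M$, and in particular $M$-invariant.

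The restriction $p_{\mc U}\restr M$ depends only on the trace of $\mc U$ on the Boolean algebra $\mc B$ of $L(M)$-definable subsets of $\set{a_i\mid i<\lambda}$. Since $\abs{L(M)}=\lambda$, $\mc B$ has cardinality at most $\lambda$ and admits at most $2^{\lambda}$ ultrafilters. By K\"onig's theorem $\cof(2^{2^{\lambda}})>2^{\lambda}$, so the partition of the $2^{2^{\lambda}}$ ultrafilters on $\lambda$ by their trace on $\mc B$ must have some fibre of size $2^{2^{\lambda}}$: otherwise the total cardinality would be a union of at most $2^{\lambda}$ sets each of cardinality strictly less than $2^{2^{\lambda}}$, hence itself strictly less than $2^{2^{\lambda}}$, a contradiction. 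Let $p\in S(M)$ be the restriction $p_{\mc U}\restr M$ common to all $\mc U$ in that fibre.

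It remains to check that distinct $\mc U_1,\mc U_2$ inside the fibre produce distinct $p_{\mc U_1},p_{\mc U_2}$: picking $X$ in their symmetric difference and letting $b_X\in N$ realise $\Sigma_X$, the formula $\phi(x;b_X)$ lies in exactly one of $p_{\mc U_1},p_{\mc U_2}$. Since every $M$-invariant type in $S(N)$ has a unique $M$-invariant global extension, the $p_{\mc U}$ yield $2^{2^{\lambda}}$ pairwise distinct $M$-invariant (global) extensions of $p$, all of which restrict to pairwise distinct types on $N$. The main obstacle is the pigeonhole step: producing many $M$-invariant types is easy, but pinning $2^{2^{\lambda}}$ of them to a single base type $p\in S(M)$ requires the cofinality input above.
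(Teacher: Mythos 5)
Your proof is correct and is essentially the standard argument that the paper invokes by citing Poizat, Theorem~12.28: stretch an IP-shattered sequence to length $\lambda$ inside a model $M$ of size $\lambda$, form the coheirs $p_{\mathcal U}$ obtained by averaging along ultrafilters $\mathcal U$ on $\lambda$ (these are finitely satisfiable in $M$, hence $M$-invariant, and pairwise distinct once the $b_X$'s are realised by $\lambda^+$-saturation), then concentrate to a single $p\in S(M)$ by a counting argument. The cofinality step via K\"onig's theorem that you use for the pigeonhole is correct, and your final observation that the whole construction can be carried out over an arbitrary $\lambda^+$-saturated $N$ by realising the $\Sigma_X$'s there is exactly the content of the paper's remark explaining the ``moreover'' clause.
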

\begin{proof}
This is  \cite[Theorem~12.28]{poizat}. The ``moreover'' part follows from the proof in the referenced source: in its  notation, it is enough to realise the $f$-types of the $b_w$ over $\set{a_\alpha\mid \alpha< \lambda}$.
\end{proof}

\begin{pr}\label{pr:ipinvtidlesize}
  If $T$ has \tf{IP} and $\monster$ is $\lambda^+$-saturated and $\lambda^+$-strongly homogeneous of cardinality $2^\lambda$, then $\invtilde$ has size $2^{\abs\monster}$.
\end{pr}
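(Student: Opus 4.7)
The plan is to squeeze $|\invtilde|$ between $2^{|\monster|}$ (as an upper bound, coming from the trivial count of global types) and the same cardinal as a lower bound (obtained by producing many invariant types and using that each $\domeq$-class is small).

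For the upper bound, I would simply note that $\invtilde$ is a quotient of $\invtypes(\monster) \subseteq S(\monster)$, and $|S(\monster)| \le 2^{|\monster|}$ since complete types are subsets of the set of $L(\monster)$-formulas, of which there are $|\monster|$ many. So $|\invtilde| \le 2^{|\monster|} = 2^{2^\lambda}$.

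For the lower bound, the key inputs are Fact~\ref{fact:manycoheirs} and Lemma~\ref{lemma:sizeofclasses} combined with Lemma~\ref{lemma:ipkazs}. By Fact~\ref{fact:manycoheirs} applied to $\lambda$, there is some $M \models T$ with $|M| = \lambda$ and a type over $M$ admitting $2^{2^\lambda}$ many $M$-invariant extensions; moreover, since $\monster$ is $\lambda^+$-saturated, these extensions can be taken over $\monster$. Without loss of generality $M \subset \monster$ (use $\lambda^+$-strong homogeneity to embed, or directly invoke the ``moreover'' clause). Each such extension is an $M$-invariant, hence a global invariant, type, so we get $2^{2^\lambda}$ distinct elements of $\invtypes(\monster)$.

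Now I would bound the size of each $\domeq$-class. By Lemma~\ref{lemma:sizeofclasses}, any $\domeq$-class has cardinality at most $\kappa^{<\sigma}$, where $\kappa = |\monster| = 2^\lambda$ and $\sigma$ is the least cardinal of non-saturation. By Lemma~\ref{lemma:ipkazs}, under \textsf{IP} we have $\kappa^{<\sigma} = 2^\lambda$. Partitioning a set of size $2^{2^\lambda}$ into classes of size at most $2^\lambda$ forces the number of classes to be at least $2^{2^\lambda}/2^\lambda$; by infinite cardinal arithmetic, since $2^\lambda < 2^{2^\lambda}$, this quotient equals $2^{2^\lambda}$. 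Therefore $|\invtilde| \ge 2^{2^\lambda} = 2^{|\monster|}$, and combined with the upper bound we conclude.

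The only delicate point is making sure that the $2^{2^\lambda}$ extensions from Fact~\ref{fact:manycoheirs} really land in $\invtypes(\monster)$ with $M$ small (so that Lemma~\ref{lemma:sizeofclasses}'s bound applies uniformly); since $|M| = \lambda < \lambda^+ \le \sigma$, the set $M$ is small in $\monster$, so there is no issue. Everything else is cardinal arithmetic.
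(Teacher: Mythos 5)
Your argument is correct and follows the paper's proof essentially line for line: lower bound via Fact~\ref{fact:manycoheirs}, bound on class size via Lemmas~\ref{lemma:sizeofclasses} and~\ref{lemma:ipkazs}, and then cardinal arithmetic. The only small slip is attributing the embedding of $M$ into $\monster$ to $\lambda^+$-strong homogeneity; what is actually needed (and what the paper invokes) is $\lambda^+$-universality, which follows from $\lambda^+$-saturation, though your alternative of relying on the ``moreover'' clause of Fact~\ref{fact:manycoheirs} is also fine.
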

\begin{proof}
Since $\lambda^+$-saturation implies $\lambda^+$-universality, we may assume that the $M$  given by Fact~\ref{fact:manycoheirs} is an elementary submodel of $\monster$, and by the ``moreover'' part of Fact~\ref{fact:manycoheirs} we have $\abs{\invtypes(\monster)}\ge 2^{2^\lambda}$. But then by Lemma~\ref{lemma:sizeofclasses}
\[
2^\ka=    2^{2^\lambda}\le\abs{\invtypes(\monster)}=\sum_{\class p\in\invtilde}\abs{\class p}\le \abs{\invtilde}\cdot \ka^{<\sigma}
  \]
 Using Lemma~\ref{lemma:ipkazs} we obtain $2^{\ka}\le \abs{\invtilde}\cdot \ka$, and therefore $\abs{\invtilde}=2^{\ka}$.
\end{proof}
\begin{co}\label{co:ipdep}
  If $T$ has \tf{IP} then $\invtilde$ depends on $\monster$.
\end{co}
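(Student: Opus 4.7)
The plan is to deduce this as an immediate consequence of Proposition~\ref{pr:ipinvtidlesize}: since that proposition pins down the cardinality of $\invtilde$ in terms of $\abs{\monster}$, exhibiting two admissible monsters of different sizes will force the two resulting $\invtilde$ to have different cardinalities, hence not be isomorphic as (pre)ordered sets (or semigroups, should $\otimes$ happen to descend to the quotient).

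Concretely, I would proceed as follows. First, invoke the Fact preceding Lemma~\ref{lemma:sizeofclasses} to pick, for any chosen $\lambda \ge \abs{T}$, a $\lambda^+$-saturated and $\lambda^+$-strongly homogeneous model of cardinality at most $2^\lambda$; by standard arguments (e.g.\ taking an elementary extension inside an ambient larger monster, or applying the construction to a slightly larger $\lambda$) one can arrange the cardinality to be exactly $2^\lambda$, so that the hypothesis of Proposition~\ref{pr:ipinvtidlesize} is met. Choose two such cardinals $\lambda_1 < \lambda_2$ far enough apart that $2^{\lambda_1} < 2^{\lambda_2}$ (e.g.\ take $\lambda_2 \ge 2^{\lambda_1}$), and let $\monster_1, \monster_2$ be the corresponding monsters. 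By Proposition~\ref{pr:ipinvtidlesize} we have $\abs{\invtilde(\monster_i)} = 2^{\abs{\monster_i}} = 2^{2^{\lambda_i}}$ for $i=1,2$, and these two cardinals are distinct by the choice of the $\lambda_i$.

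Consequently $\invtilde(\monster_1)$ and $\invtilde(\monster_2)$ cannot be isomorphic under any reasonable structure one might put on them, since they already differ in cardinality. This shows that $\invtilde$ genuinely depends on $\monster$ and not merely on $T$.

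The only mildly delicate point — hardly an obstacle — is arranging $\abs{\monster_i} = 2^{\lambda_i}$ exactly, since the cited Fact only guarantees ``at most $2^\lambda$''; one can either verify that a standard construction attains this bound in the presence of \textsf{IP} (using the $2^\lambda$ types over a model of size $\lambda$ produced as in the proof of Lemma~\ref{lemma:ipkazs}), or simply replace the exact equality by the observation that the supremum over admissible $\monster$ of $\abs{\invtilde(\monster)}$ is unbounded, so that in any case $\abs{\invtilde(\monster)}$ is not constant as $\monster$ varies.
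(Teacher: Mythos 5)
Your proposal is correct and takes essentially the same route as the paper: apply Proposition~\ref{pr:ipinvtidlesize} to produce monsters whose $\invtilde$ have different cardinalities. The paper is a bit terser --- it applies the proposition only to the larger $\monster_1$ and uses the trivial bound $\abs{\invtilde(\monster_0)}\le\abs{S(\monster_0)}\le 2^{\abs{\monster_0}}$ on the smaller one --- and your worry about attaining $\abs{\monster}=2^\lambda$ exactly is already resolved by Lemma~\ref{lemma:ipkazs}, which shows that under \textsf{IP} any $\lambda^+$-saturated model of size at most $2^\lambda$ automatically has size exactly $2^\lambda$.
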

\begin{proof}
If $\monster_1$ is, say,   $\abs{\monster_0}^+$-saturated of cardinality $2^{\abs{\monster_0}}$, then $\abs{\operatorname{\widetilde{Inv}}(\monster_1)}=2^{2^{\abs{\monster_0}}}$.
\end{proof}
\begin{question}
  Is there an unstable \tf{NIP} theory where $\invtilde$ does not depend on $\monster$? Is there one where $\invbar$ does not depend on $\monster$?
\end{question}

\begin{question}
  Can $\invtilde$ or $\invbar$ be finite?
\end{question}
By the results above and Proposition~\ref{pr:ur1son}\footnote{\ldots and the fact that we only consider  theories with no finite models\ldots} it is enough to consider the \textsf{NIP} unstable case.

\subsection{The map $\mathfrak e$} \label{subsec:e}

Let $\monster_1 \satext \monster_0$. The map $p\mapsto p\invext \monster_1$ shows that, for every tuple of variables $x$, a copy of $\invtypes_x(\monster_0)$ sits inside    $\invtypes_x(\monster_1)$; for instance,  if $T$ is stable, this is nothing more than the classic identification of types over $\monster_0$ with types over $\monster_1$ that do not fork over $\monster_0$. 
\begin{defin}
If $\monster_0\smallprec \monster_1$, we define the map $\mathfrak e\from \widetilde{\operatorname{Inv}}(\monster_0)\to \widetilde{\operatorname{Inv}}(\monster_1)$ as $\mf e(\class{p})\coloneqq\class*{p\invext \monster_1}$.  
\end{defin}
\begin{pr}
The map   $\mf e$ is well-defined and weakly increasing. If moreover $\otimes$ respects $\doms$, then $\mathfrak e$ is also a homomorphism of monoids.
\end{pr}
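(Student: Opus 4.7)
The plan is to reduce both parts of the statement to Lemma~\ref{lemma:rsufficesforinvext} and the standard commutation of invariant extensions with $\otimes$.

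For well-definedness and monotonicity, I would observe that both follow from a single fact: if $p, q \in \invtypes(\monster_0)$ and $r \in S_{pq}(A)$ witnesses $p \doms q$ for some small $A \subseteq \monster_0$, then the very same $r$ witnesses $p\invext \monster_1 \doms q\invext \monster_1$. Indeed, $A$ remains small with respect to $\monster_1$ thanks to the hypothesis $\monster_0 \smallprec \monster_1$, and since $(p\invext \monster_1)\restr A = p\restr A$ and likewise for $q$, the type $r$ still lies in $S_{p\invext \monster_1,\,q\invext \monster_1}(A)$. Lemma~\ref{lemma:rsufficesforinvext}, applied with $B = \monster_1$, then gives $(p\invext \monster_1) \cup r \proves q\invext \monster_1$. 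Applied to both domination witnesses for $p \domeq q$ this yields well-definedness of $\mf e$ on classes; applied once it yields monotonicity.

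For the monoid homomorphism property, the hypothesis that $\otimes$ respects $\doms$ is precisely what makes $\invtilde$ into a monoid at both $\monster_0$ and $\monster_1$, so the statement is well-posed. The crux is the identity
\[
(p \otimes q)\invext \monster_1 \;=\; (p\invext \monster_1) \otimes (q\invext \monster_1).
\]
Both sides are $A$-invariant global types on $\monster_1$ --- the right-hand side because the tensor product of two $A$-invariant types is $A$-invariant. To see that the right-hand side extends $p \otimes q$, I would pick $b$ realising $q\invext \monster_1$ in a larger monster (so in particular $b \models q$), use $(p\invext \monster_1)\invext \monster_1 b = p\invext \monster_1 b$ (which is in turn an extension of $p\invext \monster_0 b$) by uniqueness of the $A$-invariant extension, and unwind the definition of $\otimes$. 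Uniqueness of the $A$-invariant extension of $p \otimes q$ to $\monster_1$ then forces the two sides to coincide. Combining this identity with well-definedness already proved gives
\[
\mf e(\class p \otimes \class q) = \mf e(\class{p \otimes q}) = \class{(p \otimes q)\invext \monster_1} = \class{(p\invext \monster_1) \otimes (q\invext \monster_1)} = \mf e(\class p) \otimes \mf e(\class q),
\]
which is the desired homomorphism property.

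I do not expect any serious obstacle: everything reduces to direct applications of Lemma~\ref{lemma:rsufficesforinvext} together with the uniqueness of $A$-invariant extensions. The only point deserving care is the tensor/invariant-extension identity above, but its verification is essentially folklore and needs only the characterisation of $p\invext B$ recalled at the start of the section.
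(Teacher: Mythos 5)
Your proposal is correct and follows essentially the same route as the paper: Lemma~\ref{lemma:rsufficesforinvext} (with $B=\monster_1$) for well-definedness and monotonicity, and the identity $(p\otimes q)\invext\monster_1=(p\invext\monster_1)\otimes(q\invext\monster_1)$ for the homomorphism property, established by checking both sides agree over $\monster_0$ and invoking uniqueness of the $A$-invariant extension. The only (trivial) omission is that to get a homomorphism of \emph{monoids} rather than semigroups you should also note $\mf e(\class 0)=\class 0$, which holds since an invariant extension of a realised type is realised.
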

\begin{proof}
 If $p\doms q$, as witnessed by $r$, by Lemma~\ref{lemma:rsufficesforinvext} we have $(p\invext \monster_1)\cup r\proves (q\invext \monster_1)$, and the first part follows.
  
Suppose now that $\otimes$  respects $\doms$ and denote for  brevity $p\invext \monster_1$ with $\widetilde p$.
Recall that, if $\phi(x,y)\in L(\monster_0)$, then $\phi\in p_x\otimes q_y$ if and only if for \emph{any} $b\models q$ we have $\phi(x,b)\in p\invext\monster_0b$. This in particular holds for any $b\models \widetilde q$ and shows that $(\widetilde p\otimes \widetilde q)\restr \monster_0=p\otimes q$, or in other words $(p\otimes q)\invext \monster_1=\tilde p\otimes \tilde q$. Therefore
 \[\mf e(\class{p})\otimes \mf e(\class{q})=\class{\widetilde p}\otimes \class{\widetilde q}=\class{(p\otimes q)\invext \monster_1}=\mf e(\class{p\otimes q})\] so $\mf e$ is a homomorphism of semigroups. As $\mf e$ clearly sends $\class 0$  to $\class 0$, because an extension of a realised type is realised, we have the conclusion.
\end{proof}

\begin{lemma}\label{lemma:suffinj}
Suppose that every time $p,q\in S(\monster_1)$ are $A_0$-invariant for some $A_0\smallsubset \monster_0$ and $p\doms q$ then this can be witnessed by some $r'\in S(A')$ such that $\monster_0$ is $\abs {A'}^+$-saturated and $\abs {A'}^+$-strongly homogeneous.\footnote{Note that $A'$ need not be a subset of $\monster_0$.} Then $\mf e$ is injective and $\mathfrak e(\class p)\doms \mathfrak e(\class q)$ implies $\class p \doms \class q$.
\end{lemma}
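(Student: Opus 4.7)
The second assertion implies the first: if $\mf e(\class p)=\mf e(\class q)$ then $\mf e(\class p)\domeq \mf e(\class q)$, hence both $\mf e(\class p)\doms \mf e(\class q)$ and $\mf e(\class q)\doms\mf e(\class p)$; the second statement then yields $\class p\domeq \class q$. So the task reduces to proving that $\mf e(\class p)\doms\mf e(\class q)$ implies $\class p\doms \class q$.

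Assume $p,q\in\invtypes(\monster_0,A_0)$ with $A_0\smallsubset \monster_0$, and set $\widetilde p\ceq p\invext\monster_1$, $\widetilde q\ceq q\invext\monster_1$. Suppose $\widetilde p\doms\widetilde q$. By the hypothesis of the lemma there exist a small $A'$ (which we may enlarge so that $A_0\subseteq A'$) and $r'\in S_{pq}(A')$ with $\widetilde p\cup r'\proves \widetilde q$ and such that $\monster_0$ is $\abs{A'}^+$-saturated and $\abs{A'}^+$-strongly homogeneous. In particular $\monster_0$ realises $\tp(A'/A_0)$, so we may pick $A''\ssq \monster_0$ and an $A_0$-elementary bijection $\sigma\from A'\to A''$. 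By strong homogeneity of the ambient monster model $\monster_*\satext \monster_1$, $\sigma$ extends to some $\widetilde\sigma\in\aut(\monster_*/A_0)$, and we set $r''\ceq \widetilde\sigma(r')\in S_{xy}(A'')$. The plan is to show that $r''$ witnesses $p\doms q$.

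First, $r''\supseteq (p\restr A'')\cup (q\restr A'')$: since $p$ is $A_0$-invariant and $\sigma$ fixes $A_0$ pointwise, the definition of invariant extension gives $\widetilde\sigma\bigl((\widetilde p)\restr A'\bigr)=p\restr A''$, and similarly for $q$, so applying $\widetilde\sigma$ to $r'\supseteq(\widetilde p\restr A')\cup(\widetilde q\restr A')$ yields the required inclusion. Second, to obtain $p\cup r''\proves q$, use that $\widetilde p,\widetilde q$ are $A'$-invariant (as $A_0\ssq A'$) and apply Lemma~\ref{lemma:rsufficesforinvext} to lift $\widetilde p\cup r'\proves \widetilde q$ to $(p\invext B)\cup r'\proves q\invext B$ for any $B\supseteq\monster_1$. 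Applying the automorphism $\widetilde\sigma$ (which fixes $A_0$) and using the uniqueness of $A_0$-invariant extensions gives $(p\invext \widetilde\sigma(B))\cup r''\proves q\invext \widetilde\sigma(B)$. Choosing $B$ so that $\widetilde\sigma(B)\supseteq \monster_0$, and then using Lemma~\ref{lemma:babycraig} to restrict back to $\monster_0$ (with $A''\ssq\monster_0$ as the small base), we obtain $p\cup r''\proves q$, as required.

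The only mildly delicate point is the bookkeeping around $\widetilde\sigma$: it need not fix $\monster_0$, so one must keep invariant extensions tagged by their codomain and invoke Lemma~\ref{lemma:rsufficesforinvext} to guarantee that implications survive the move from $\monster_1$ to $\widetilde\sigma(B)$ and back down to $\monster_0$. This is where the saturation/strong homogeneity assumption on $\monster_0$ relative to $\abs{A'}$ is essential, since it is precisely what allows the small witness $A'$ to be copied inside $\monster_0$ via an $A_0$-elementary map.
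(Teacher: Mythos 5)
Your proof is correct, and the key idea — transport the witness $r'$ to one with parameters inside $\monster_0$ via an automorphism fixing $A_0$, exploiting the $A_0$-invariance of $\widetilde p$ and $\widetilde q$ — is the same as the paper's. However, your route is more circuitous. The paper takes $f\in\aut(\monster_1/A_0)$ carrying $A'$ into $\monster_0$, observes that $f$ fixes $\widetilde p$ and $\widetilde q$ by $A_0$-invariance, applies Lemma~\ref{lemma:babycraig} once to restrict $\widetilde p\cup r'\proves\widetilde q$ down to $f^{-1}(\monster_0)$, and then applies $f$, arriving at $p\cup f(r')\proves q$ in a single pass and never leaving $\monster_1$. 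You instead lift via Lemma~\ref{lemma:rsufficesforinvext} up to a set $B\supseteq\monster_1$, apply an automorphism $\widetilde\sigma$ of a larger ambient monster, and then restrict back down with Lemma~\ref{lemma:babycraig}. This costs an extra lemma and forces the identification $\widetilde\sigma(p\invext B)=p\invext\widetilde\sigma(B)$, which you justify a bit tersely by ``uniqueness of $A_0$-invariant extensions''. The delicate point there is that $\widetilde\sigma$ need not fix $\monster_0$ pointwise, so it is not immediate that $\widetilde\sigma(p\invext B)$ extends $p$ (rather than $\widetilde\sigma(p)$, which lives over $\widetilde\sigma(\monster_0)$); one should argue, e.g., that $\widetilde\sigma$ fixes the $A_0$-invariant extension of $p$ to the ambient monster, of which $p\invext B$ is a restriction, or spell out the $A_0$-invariance argument formula by formula. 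The paper's choice to restrict first and apply the automorphism second sidesteps this delicacy entirely, since $f$ is by construction an automorphism of $\monster_1$ fixing both $\widetilde p$ and $\widetilde q$ as sets of formulas.
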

\begin{proof}
  We have to check that, in the previous notations, if $\widetilde p\doms \widetilde q$ then $p\doms q$. If $\widetilde p\doms \widetilde q$ can be witnessed by some $r$  with parameters in some $A\smallsubset \monster_0$, then we are done: by Lemma~\ref{lemma:babycraig} $p\cup r\proves q$.
  
As $\monster_0$ is $\abs{A_0\cup A'}^+$-saturated and  $\abs{A_0\cup A'}^+$-strongly homogeneous, up to taking unions we may assume $A'\supseteq A_0$, and by hypothesis we can find an $A_0$-isomorphic copy $A$ of $A'$ inside $\monster_0$. Let $f\in \aut(\monster_1/A_0)$ be such that $A=f(A')$ and define \[\monster_0'\coloneqq f^{-1}(\monster_0) \qquad p'\coloneqq f^{-1}(p)\in S(\monster_0') \qquad q'\coloneqq f^{-1}(q)\in S(\monster_0')\] As $\widetilde p$ and $\widetilde q$ are $A_0$-invariant they are fixed by $f$, so $p'\ssq \widetilde p$ and $q'\ssq \widetilde q$; by Lemma~\ref{lemma:babycraig} we therefore have $p'\cup r'\proves q'$, and so  $r\coloneqq f(r')$ witnesses both $\widetilde p\doms \widetilde q$ and $p\doms q$.
\end{proof}

The hypotheses of the lemma are satisfied for instance if $T$ has degenerate domination, or if $T$ is stable by Corollary~\ref{co:stableinjective}. Note that, should $\mathfrak e$ fail to be injective, we could still in principle have two monster models $\monster_0$ and $\monster_1$ of different cardinalities such that $\abs{\operatorname{\widetilde{Inv}}(\monster_0)}=\abs{\operatorname{\widetilde{Inv}}(\monster_1)}$. For instance, even in a theory with \textsf{IP},  the results of the previous subsection do not prevent this from happening in the case where  $\abs{\monster_0}$ and $\abs{\monster_1}$ are, say, strongly inaccessible cardinals.

\begin{question}\label{question:downwardclosed}
Is the image of $\mathfrak e$ downward closed? More generally, if $p\in \invtypes(\monster)$ is dominated by some $M$-invariant type, is $p$ then domination-equivalent to some $M$-invariant type?
\end{question}
\noindent By standard results (see~\cite[Lemma~2.18]{simon} and~\cite[Fact~1.9(2)]{gstabstab}), if Question~\ref{question:downwardclosed} has a positive answer then so does    Question~\ref{q:dwc}; for instance,  any $M$-invariant type finitely satisfiable in some small $N$ is finitely satisfiable in $M$.

\section{Stable Theories}\label{section:stabletheories}

The domination preorder we defined generalises a notion from classical stability theory.  For the sake of completeness, we collect in  this  section what is already known in the stable case.   From now on, we will assume some knowledge of stability theory from the reader, and $T$ will be stable unless otherwise stated; we  repeat this assumption for emphasis. References for almost everything that follows can be found in e.g.~\cite{buechler, gstheory, poizat}\footnote{In~\cite{buechler}, some results are only stated for theories with regular $\kappa(T)$; the reason for this is that~\cite{buechler} defines an a-model to be a strongly $\kappa(T)$-saturated model, as opposed to a strongly $\kappa_\mathrm{r}(T)$-saturated one.}. In this section, we mention \emph{orthogonality} of types, denoted by $\perp$, which is a strengthening of weak orthogonality that can be defined in a  stable theory for stationary types (see~\cite[Section~1.4.3]{gstheory}). For global types,  it coincides with weak orthogonality.

\subsection{The Classical Definition}
In the following definition $A$ is allowed to be a large set, e.g.\ we allow $A=\monster$.
\begin{defin}\label{defin:cldom}
We say that  $a$ \emph{weakly dominates $b$ over $A$} iff for all $d$ we have  $a\find A d\then b\find A d$. We say that  \emph{$a$ dominates $b$ over $A$}, written $a\cldoms_Ab$, iff for every $B\supseteq A$ if $ab\forkindep_AB$ then $a$ weakly dominates $b$ over $B$.
\end{defin}

\begin{fact}[{see~\cite[Lemma~1.4.3.4]{gstheory} and~\cite[Lemma~19.18]{poizat}}]
 Suppose  $A\subseteq B$ and $ab\find AB$. Then $a\cldoms_B b$ if and only if $a\cldoms_A b$. Moreover, over a $\abs T^+$-saturated model domination and weak domination are equivalent.
\end{fact}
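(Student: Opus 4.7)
The statement naturally decomposes into the equivalence and the ``moreover'' clause. For the equivalence, the $(\Leftarrow)$ direction is immediate from transitivity of nonforking: if $a\cldoms_A b$ and $B'\supseteq B$ satisfies $ab\find B B'$, the hypothesis $ab\find A B$ combined with transitivity gives $ab\find A B'$, and then $a\cldoms_A b$ supplies weak domination of $b$ by $a$ over $B'$.

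For $(\Rightarrow)$, fix $C\supseteq A$ with $ab\find A C$; the task is to show that $a$ weakly dominates $b$ over $C$. My plan is to use the extension axiom to produce $B'\equiv_{Aab}B$ with $B'\find{Aab}C$, and to transfer $a\cldoms_B b$ to $a\cldoms_{B'}b$ via an automorphism fixing $Aab$. Transitivity of $ab\find A B'$ with $B'\find{Aab}C$ then gives $ab\find{B'}C$, so the transferred domination yields weak domination of $b$ by $a$ over $B'C$. To descend this to $C$, I would pick any $d$ with $a\find C d$ and use extension again to choose $d'\equiv_{Ca}d$ with $d'\find{Ca}B'$; standard forking calculus then yields $a\find{B'C}d'$, hence $b\find{B'C}d'$; combining with $b\find A B'$ (from $ab\find A B'$ by monotonicity) via transitivity produces $b\find C d'$. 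A final stationarity step---carried out in $T^{\eq}$ so that $\tp(d/Ca)$ has a unique nonforking extension over $Cab$---then transports $b\find C d'$ back to $b\find C d$.

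For the moreover clause, one direction is trivial: taking $B=M$ in the definition of $\cldoms_M$ immediately yields weak domination over $M$. For the converse, suppose $M$ is $\abs T^+$-saturated and $a$ weakly dominates $b$ over $M$; let $B\supseteq M$ satisfy $ab\find M B$ and $d$ be such that $a\find B d$, so that the task is $b\find B d$. Because stationarity holds over $M$ and $\tp(b/M)$ has a unique nonforking extension, it suffices to establish the stronger $b\find M Bd$. Transitivity with $a\find M B$ (obtained from $ab\find M B$) and $a\find B d$ gives $a\find M Bd$, and in particular $a\find M d$, whence weak domination supplies $b\find M d$. The remaining upgrade from $b\find M d$ and $b\find M B$ to the joint $b\find M Bd$ is the technical heart of the proof: I expect to argue by contradiction, supposing some formula of $\tp(b/MBd)$ forks over $M$, and to invoke $\abs T^+$-saturation of $M$ to realise, inside $M$, a small witness $d^{*}\equiv_{Ma}d$ for which both $a\find M d^{*}$ and $b\fdep M d^{*}$ hold, directly contradicting weak domination over $M$. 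The main obstacle is arranging the saturation-based realisation so as to preserve simultaneously the independence $a\find M d^{*}$ and the failure $b\fdep M d^{*}$; this is exactly where $\abs T^+$-saturation (rather than mere saturation over an arbitrary small set) becomes essential, since the required witness involves finitely many parameters from $Ma$ together with a finite fragment of $Bd$.
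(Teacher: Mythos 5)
The $(\Leftarrow)$ direction is fine. The $(\Rightarrow)$ direction, however, has a genuine gap at the final ``stationarity step''. You introduce $d'\equiv_{Ca} d$ with $d'\find{Ca}B'$, obtain $b\find{C} d'$, and then appeal to stationarity of $\tp(d/Ca)$ in $T^{\eq}$ to transfer this to $b\find{C} d$. But $Ca$ is an arbitrary set, so $\tp(d/Ca)$ need not be stationary even in $T^{\eq}$; and even if it were, you would additionally need $d\find{Ca}b$ and $d'\find{Ca}b$ to conclude $d\equiv_{Cab}d'$, and neither independence is available. The standard remedy is to build $d$ into the choice of $B'$ from the outset: take $B'\equiv_{Aab}B$ with $B'\find{Aab}Cd$. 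Monotonicity then gives $B'\find{Cab}d$; combining this with $b\find{Ca}B'$ (extracted from $ab\find{A}B'C$, which you already have) via transitivity of nonforking yields $bd\find{Ca}B'$, hence $d\find{Ca}B'$, and together with $a\find{C}d$ this gives $a\find{B'C}d$. Weak domination over $B'C$ then yields $b\find{B'C}d$, and transitivity with $b\find{C}B'$ gives $b\find{C}d$ directly --- no conjugate $d'$ and no stationarity.

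The moreover clause also has a problem in the nontrivial direction. The reductions are fine up to the point of wanting $b\find{M}Bd$ given $b\find{M}d$ and $b\find{M}B$, but the proposed contradiction cannot work as described: a witness $d^*$ realised \emph{inside} $M$ automatically satisfies $b\find{M}d^*$, so it can never contradict weak domination. The correct argument is shorter: if $b\fdep{M}Bd$ then, by local character of forking in a stable theory, there is a formula $\phi(x,d,e)\in\tp(b/Bd)$ with $e\in B$ finite that forks over $M$; thus $de$ is a finite tuple with $a\find{M}de$ (monotonicity from $a\find{M}Bd$) and $b\fdep{M}de$, contradicting weak domination of $b$ by $a$ over $M$ applied to the tuple $de$.
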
 

\begin{defin}\label{defin:cldomtypes}
For stationary $p,q\in S(A)$ we say that  $p\cldoms q$ iff there are $a\models p$ and $b\models q$ such that $a\cldoms_A b$. If $p\cldoms q\cldoms p$ we write $p\cldomeq q$. If there are $a\models p$ and $b\models q$ such that $a\cldoms_A b\cldoms_A a$ we  write $p\doteq q$.
\end{defin}

\begin{pr}\label{pr:sameasclassic}
Suppose that $T$ is stable and $p,q$ are global types. Then
\begin{enumerate}
\item $p\doms q$ if and only if  $p\cldoms q$.
\item $p\equidom q$ if and only if $p\doteq q$.
\item \label{point:bddsize}If $p\doms q$ then this is witnessed by some $r\in S_{pq}(M)$ with $\abs{M}\le \abs{T}$.
\end{enumerate}
\end{pr}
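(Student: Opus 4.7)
The plan is to translate between the ``small-type entailment'' formulation of $\doms$ and $\equidom$ given in Definition~\ref{defin:domination} and the classical nonforking formulation of $\cldoms$ and $\doteq$. Two ingredients drive the argument: in a stable theory an invariant global type coincides with the nonforking extension of its restriction to any small model over which it is based; and Lemma~\ref{lemma:rsufficesforinvext} lets us transport the entailment $p\cup r\proves q$ along such nonforking extensions.

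For the forward direction of~(1), suppose $r\in S_{pq}(A)$ witnesses $p\doms q$ and fix $(a,b)\models p\cup r$ in a larger monster. By the Fact preceding Definition~\ref{defin:cldomtypes}, over the saturated model $\monster$ classical domination coincides with weak domination, so it suffices to check: whenever $a\find_\monster d$, also $b\find_\monster d$. Stability gives $\tp(a/\monster d)=p\invext \monster d$, and Lemma~\ref{lemma:rsufficesforinvext} yields $(p\invext \monster d)\cup r\proves q\invext \monster d$; since $(a,b)$ realises the left-hand side, $\tp(b/\monster d)=q\invext \monster d$, so $b\find_\monster d$. The forward direction of~(2) is identical, applied to a single $r$ witnessing both sides of $\equidom$.

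For the backward direction of~(1) and for~(3), fix $a\models p$ and $b\models q$ with $a\cldoms_\monster b$. Standard stability (finite character of forking and existence of small bases) lets us choose a model $M\smallprec \monster$ with $\abs M\le \abs T$ over which both $p$ and $q$ are based and such that $ab\find_M \monster$; set $r\ceq \tp(ab/M)\in S_{pq}(M)$. The claim is $p\cup r\proves q$. Given $(a',b')\models p\cup r$, an automorphism of the ambient monster fixing $\monster$ pointwise and sending $a'$ to $a$ (available since $\tp(a'/\monster)=p=\tp(a/\monster)$) reduces to $a'=a$, whence $\tp(a,b'/M)=r=\tp(a,b/M)$, and in particular $\tp(b'/Ma)=\tp(b/Ma)$. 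Since $r$ is stationary over the model $M$, the unique nonforking extension of $r$ to $\monster$ is $\tp(ab/\monster)$; it therefore suffices to show $ab'\find_M \monster$, whereupon $\tp(ab'/\monster)=\tp(ab/\monster)$ forces $\tp(b'/\monster)=q$. This independence is where $a\cldoms_\monster b$ genuinely enters: combining weak domination over $\monster$ with the equality $\tp(a,b'/Ma)=\tp(a,b/Ma)$ and the stationarity of $\tp(b/Ma)$ produces $b'\find_{Ma}\monster$, and hence $ab'\find_M\monster$ since $a\find_M\monster$. The bound $\abs M\le \abs T$ then yields~(3). For the backward direction of~(2), the same $M$ and $r\ceq\tp(ab/M)$ work: starting from $a\cldoms_\monster b\cldoms_\monster a$, running the above argument symmetrically gives both $p\cup r\proves q$ and $q\cup r\proves p$, so $r$ witnesses $p\equidom q$.

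The main obstacle is the final independence step in the backward direction of~(1): extracting $b'\find_{Ma}\monster$ for an \emph{arbitrary} realisation $(a',b')$ of $p\cup r$, not merely the distinguished $(a,b)$. This is the only place where the hypothesis $a\cldoms_\monster b$ is truly used, and it demands a careful interplay of the weak-domination reformulation of classical domination over $\monster$, the equality of $Ma$-types of $(a,b)$ and $(a,b')$, and the stationarity of types over the small model $M$.
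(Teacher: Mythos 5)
Your overall architecture matches the paper's sketch: forward direction via weak domination over the saturated $\monster$ using Lemma~\ref{lemma:rsufficesforinvext}, backward direction and~(3) by choosing a small $M$ with $ab\find_M\monster$ and $r\coloneqq\tp(ab/M)$. The forward direction is correct. However, the final independence step in the backward direction — the one you yourself flag as the crux — does not go through as written.

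You want $b'\find_{Ma}\monster$ and claim to extract it from ``weak domination over $\monster$'' together with ``stationarity of $\tp(b/Ma)$''. Neither piece works here. Weak domination of $b$ by $a$ over $\monster$ asserts that $a\find_\monster d\Rightarrow b\find_\monster d$ for sets $d$ outside $\monster$; this says nothing about independence of $b'$ from $\monster$ over $Ma$, and you have not explained how to bridge that gap. As for stationarity of $\tp(b/Ma)$: $Ma$ is not a model, and this type need not be stationary; moreover, even if you assumed it, invoking it to obtain $b'\find_{Ma}\monster$ is circular, since you would first need to know $b'\find_{Ma}\monster$ to compare $\tp(b'/\monster)$ with the nonforking extension of $\tp(b/Ma)$. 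The clean argument goes via $M$, not $Ma$: from $a\cldoms_\monster b$ and $ab\find_M\monster$ the Fact yields $a\cldoms_M b$, hence $a$ weakly dominates $b$ over $M$; this property is a statement about $\tp(ab/M)=\tp(ab'/M)$, so by an $M$-automorphism sending $(a,b)$ to $(a,b')$ we get that $a$ weakly dominates $b'$ over $M$; then $a\find_M\monster$ gives $b'\find_M\monster$, and stationarity of $\tp(b/M)$ \emph{over the model $M$} forces $\tp(b'/\monster)=q$. Note also that your detour through $ab'\find_M\monster$ is both unnecessary and unsupported: $a\find_M\monster$ and $b'\find_M\monster$ together do not imply $ab'\find_M\monster$, so that route really does require $b'\find_{Ma}\monster$, which is the very thing left unjustified.
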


  \begin{proof}[Proof Sketch]
 If $p\cup r\proves q$, where $r\in S(M)$, and $(a,b)\models r$, then it can be checked that $a$ weakly dominates $b$ over $M$, and if $M$ is large enough this yields $p\cldoms q$. In the other direction, take $a\models p$, $b\models q$ witnessing domination and consider their type over some $M$  of size at most $\abs T$ such that $ab\find M\monster$. The rest follows easily. For more details, see e.g.~\cite[Lemma~1.4.3.4~(iii)]{gstheory}.
\end{proof}
 More conceptual proofs of  the  first and last point can be obtained from the classical results that $p\cldoms q$ if and only if $q$ is realised in the prime a-model containing a realisation of $p$, and that prime a-models are a-atomic (see~\cite[Lemma~1.4.2.4]{gstheory}). Note that a consequence of this equivalence is that in a stable theory semi-a-isolation (i.e.\ $\doms$ by point~\ref{point:bddsize} of the previous Proposition) is the same as a-isolation: if $p\cup r\proves q$ then $r$ can be chosen such that $p\cup r$ is complete, despite $r$ being small.

\begin{co}\label{co:stableinjective}
  If $T$ is stable,   then $\mf e$ is injective  and $\mathfrak e(\class p)\doms \mathfrak e(\class q)$ implies $\class p \doms \class q$.
\end{co}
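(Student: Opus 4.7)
The plan is to derive this as an essentially immediate consequence of Lemma~\ref{lemma:suffinj} combined with part~\ref{point:bddsize} of Proposition~\ref{pr:sameasclassic}. Recall that Lemma~\ref{lemma:suffinj} gives precisely the two desired conclusions (injectivity of $\mathfrak e$ and the reflection of $\doms$) under the hypothesis that every domination $p\doms q$ in $S(\monster_1)$ between types that happen to be $A_0$-invariant for some small $A_0\smallsubset \monster_0$ can be witnessed by a small type $r'\in S(A')$ over a parameter set $A'$ for which $\monster_0$ is $\abs{A'}^+$-saturated and $\abs{A'}^+$-strongly homogeneous. So the whole task reduces to verifying this hypothesis in the stable setting.

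To check the hypothesis, let $p,q\in S(\monster_1)$ be as above with $p\doms q$. By Proposition~\ref{pr:sameasclassic}(\ref{point:bddsize}), stability guarantees that this domination is witnessed by some $r'\in S_{pq}(M')$ with $\abs{M'}\le \abs T$. Since $\monster_0$ is by assumption a monster model, it is $\ka$-saturated and $\ka$-strongly homogeneous for some large $\ka$; in particular $\ka>\abs T$, so $\monster_0$ is $\abs{M'}^+$-saturated and $\abs{M'}^+$-strongly homogeneous. This is exactly what Lemma~\ref{lemma:suffinj} requires (with $A'\ceq M'$), so both conclusions of the corollary follow.

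There is no real obstacle: the work has already been done in Lemma~\ref{lemma:suffinj} and in the classical identification of $\doms$ with a-isolation carried out in Proposition~\ref{pr:sameasclassic}. The only thing worth noting is that the bound $\abs{M'}\le\abs T$ from Proposition~\ref{pr:sameasclassic}(\ref{point:bddsize}) is essential here: without some such uniform bound there would be no guarantee that the witnessing parameter set can be accommodated inside the saturation of the smaller monster $\monster_0$, and the argument via Lemma~\ref{lemma:suffinj} would break down.
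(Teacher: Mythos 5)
Your proof is correct and takes precisely the same route as the paper: invoke Proposition~\ref{pr:sameasclassic}(\ref{point:bddsize}) to obtain a witness over a set of size at most $\abs T$, and then feed that into Lemma~\ref{lemma:suffinj}. You have simply spelled out the verification of the lemma's hypothesis, which the paper leaves implicit.
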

\begin{proof}
  By point~\ref{point:bddsize} of Proposition~\ref{pr:sameasclassic} we can apply Lemma~\ref{lemma:suffinj}.
  \end{proof}

\begin{rem}\label{rem:tteqstab}
  While studying $\invtilde$  in a stable $T$, there is no harm in passing to $T^\eq$, which we see as a multi-sorted structure, for the following reason. Even without assuming stability, every type $p\in S(\monster)$ in $T^\eq$ is dominated by, and in particular  (if it is nonrealised) not weakly orthogonal to, a type $q\in S(\monster)$ with all variables in the home sort via the projection map. Suppose now that $T$ is stable and let $M$ be such that $p$ and $q$ do not fork over $M$. By (the proof of)~\cite[Lemma~19.21]{poizat} there is a (possibly forking) extension of $q\restr M$ which is equidominant with $p$. Trivially, this extension has all variables in the home sort. We would like to thank Anand Pillay for pointing this out.
  \end{rem}
  \begin{rem}\label{rem:simplecase}
    Definition~\ref{defin:cldom} makes sense also in simple theories, and more generally in rosy theories if we replace forking by \th -forking (see~\cite{thorndom}). One can then give a definition of $\cldoms$ even for types that are not stationary but, in the unstable case, even for global types the relation  $\cldoms$ need not coincide with $\doms$. For instance, in the notation of Definition~\ref{defin:tfc}, let $(b,c)\models q_1$ and $a\models p\invext \monster bc$, and recall that in $T$ forking is characterised as
    \[
      e\find C d\iff (e\cap d\subseteq C) \land (\pi e\cap \pi d\subseteq \pi C)
    \]
    It follows that, for all $B\supseteq \monster$ such that $abc\find \monster B$, and for all  $d$ such that $ab\find B d$, we have  $abc\find B d$, and therefore $ab\cldoms_\monster abc$. Since $\tp(a,b/\monster)=p\otimes q_0$ and $\tp(a,bc/\monster)=p\otimes q_1$ this shows $p\otimes q_0\cldoms p\otimes q_1$, but by Proposition~\ref{pr:dombreaks} $p\otimes q_0\ndoms p\otimes q_1$. 
  \end{rem}

\subsection{Thin Theories}

Recall that a stable theory is \emph{thin}  iff every complete type has finite weight (see~\cite[Section~1.4.4]{gstheory} for the definition of weight). For instance  superstable theories are thin (\!\!\cite[Corollary~1.4.5.8]{gstheory})  and so are theories with no dense forking chains (\!\!\cite[Lemma~4.3.7]{gstheory}) or where every complete type has \emph{rudimentarily finite} weight (\!\!\cite[Proposition~4.3.10]{gstheory}). This hypothesis provides a structure theorem for $\invtilde$, namely Theorem~\ref{thm:bigoplusN}. This result is implicit in the literature (see~\cite[Proposition~4.3.10]{gstheory}), but we need to state it is as done below for later use.
\begin{fact}[\!\!{\cite[Lemma~1.4.4.2]{gstheory}}]\label{fact:w1pod}
  If $p$ and $q$  have both weight $1$ then the following are equivalent:
  \begin{enumerate*}
  \item $p\centernot\perp q$.
  \item $p\domeq q$.
  \item $p\equidom q$.
  \end{enumerate*}
\end{fact}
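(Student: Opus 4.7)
The plan is to lean on Proposition~\ref{pr:sameasclassic}: in the stable case, $\doms$ coincides with the classical $\cldoms$ and $\equidom$ coincides with $\doteq$, so the problem translates into one about forking. The implication $3\Rightarrow 2$ is immediate from the definitions, and for weight $1$ (hence nonrealised) types the implication $2\Rightarrow 1$ follows at once from Corollary~\ref{co:daptr}: $p\doms q$ together with $p\wort q$ would force $q$ to be realised. The content of the statement therefore lies in the implication $1\Rightarrow 3$.

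For this, I would pick a small model $M$ over which both $p$ and $q$ are based (do not fork), and use $p\centernot\perp q$ to find $a\models p\restr M$ and $b\models q\restr M$ with $a\fdep_M b$. The plan is then to show, using weight $1$ of $p$, that this single dependence is enough to force $a\cldoms_M b$, and symmetrically, using weight $1$ of $q$, that $b\cldoms_M a$. Crucially, the same pair $(a,b)$ witnesses both relations, so we obtain $a\doteq b$ over $M$ and hence $p\equidom q$ by Proposition~\ref{pr:sameasclassic}, not merely $p\domeq q$.

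The key sublemma to establish is: if $\tp(a/M)$ has weight $1$ and $a\fdep_M b$, then $a\cldoms_M b$. The argument goes by contradiction. Suppose there were $B\supseteq M$ with $ab\find_M B$ and some $d$ with $a\find_B d$ but $b\fdep_B d$. Using stationarity over $M$, one transports $d$ to an $M$-independent copy of itself relative to $b$ and then applies transitivity and symmetry of $\find$ to exhibit two $M$-independent tuples each forking with $a$ over $M$. This contradicts $\mathrm{wt}(\tp(a/M))=1$.

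The main obstacle is precisely this bookkeeping of bases, i.e.\ converting the failure of domination, which is phrased over the larger set $B$, into a weight configuration over $M$. Once this is handled, everything else is formal: the symmetric weight $1$ hypothesis on $q$ yields $b\cldoms_M a$ via the same $(a,b)$, the passage to global types is Proposition~\ref{pr:sameasclassic}, and the fact that a single pair realises both dominations is exactly what distinguishes the equidominance $\equidom$ from the weaker $\domeq$, thereby giving the full equivalence of $1$, $2$ and $3$.
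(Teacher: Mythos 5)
The paper does not prove this statement; it records it as a Fact and cites Pillay's \emph{Geometric Stability Theory}, Lemma~1.4.4.2, so there is no in-paper argument to compare against. Your reductions are organised sensibly: $3\Rightarrow2$ is formal, $2\Rightarrow1$ follows from Corollary~\ref{co:daptr} since weight $1$ types are nonrealised, and routing $1\Rightarrow3$ through Proposition~\ref{pr:sameasclassic} to reduce to a forking statement over a small base is the right move.

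However, the key sublemma you isolate has the weight hypothesis on the wrong type, and the forking argument you sketch for it cannot close. You claim: if $w(\tp(a/M))=1$ and $a\fdep_M b$, then $a\cldoms_M b$. This is false: in the theory of an infinite set take $a$ a single generic over $M$ (weight $1$) and $b=(a,a')$ with $a'$ generic and $a'\find_M a$; then $a\fdep_M b$, yet $a\find_M a'$ while $b\fdep_M a'$, so $a$ does not dominate $b$ over $M$. The correct statement puts the weight hypothesis on the \emph{dominated} type: if $w(\tp(b/M))=1$ and $a\fdep_M b$ then $a\cldoms_M b$. This is not a cosmetic slip, because it determines which weight the contradiction must target. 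In your failure configuration $ab\find_M B$, $a\find_B d$, $b\fdep_B d$, one has $a\find_M B$ and hence $a\find_M Bd$ by transitivity, so $a$ is independent over $M$ from everything in sight except $b$; there is no way to ``exhibit two $M$-independent tuples each forking with $a$'', and the step you leave vague (``transports $d$ to an $M$-independent copy of itself'') cannot be filled in. The correct contradiction is against $w(b)$: from $a\fdep_M b$ and $ab\find_M B$ one gets $a\fdep_B b$, so $a$ and $d$ are $B$-independent tuples each forking with $b$ over $B$, giving $w(\tp(b/B))\ge 2$; since $b\find_M B$ this contradicts $w(\tp(b/M))=1$. In the present Fact both $p$ and $q$ have weight $1$, so the corrected sublemma applied in both directions still yields $a\cldoms_M b\cldoms_M a$ for a single pair $(a,b)$, and your intended conclusion $p\equidom q$ is recovered. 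One further small point: extracting $a\models p\restr M$, $b\models q\restr M$ with $a\fdep_M b$ from $p\not\perp q$ is not automatic over an arbitrary small $M$; either take $M$ to be an $a$-model, so that orthogonality and almost orthogonality over $M$ coincide, or work directly over $\monster$, where $\perp$ agrees with $\wort$.
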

\begin{fact}[{\!\!\cite[Lemma~5.6.4 (iv)]{buechler}}]\label{fact:weightdom}
  Weight is preserved by domination-equivalence.
\end{fact}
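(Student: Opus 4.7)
The plan is to reduce the statement to the one-sided claim that classical domination can only increase weight, and prove that by transporting a weight-witnessing independent sequence across the domination. First I would invoke Proposition~\ref{pr:sameasclassic} to translate our external $\domeq$ into Lascar's classical $\cldomeq$, so the task becomes: show $p \cldoms q \Rightarrow w(p) \geq w(q)$; applying this symmetrically then gives $w(p) = w(q)$ whenever $p \cldomeq q$.

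To prove the one-sided claim, fix stationary $p, q \in S(A)$ (we may enlarge $A$ to an a-model by Proposition~\ref{pr:sameasclassic}) and realisations $a \models p$, $b \models q$ with $a \cldoms_A b$. Unwinding Definition~\ref{defin:cldom}, for every $B \supseteq A$ with $ab \find{A} B$ and every tuple $d$ we have $a \find{B} d \Rightarrow b \find{B} d$, equivalently, $b \fdep{B} d \Rightarrow a \fdep{B} d$. In words, the forking locus of $a$ over any non-forking extension contains that of $b$.

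Next I would transport a weight witness. Suppose $w(q) \geq \kappa$. By the definition of weight in a stable theory, there exist $B \supseteq A$ with $b \find{A} B$ and a $B$-independent family $(c_i)_{i<\kappa}$ such that $b \fdep{B} c_i$ for every $i < \kappa$. Using stationarity of $p$ over $A$ and invariance of the data under $A$-automorphisms, arrange in addition $ab \find{A} B$ while keeping the sequence $(c_i)$ intact (replace $(c_i)$ by an $Ab$-conjugate if needed, and $a$ by its non-forking extension of $\tp(a/A)$ to $Bb$). The contrapositive of the previous paragraph then yields $a \fdep{B} c_i$ for every $i$, so the very same $B$-independent sequence witnesses $\mathrm{pw}(a/B) \geq \kappa$; since $a \find{A} B$ this gives $w(p) = w(a/A) \geq \kappa$, as desired.

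The main obstacle is purely bookkeeping: arranging the global independence $ab \find{A} B$ without destroying the witness $(c_i)$ for the weight of $b$. This is a standard non-forking transport argument once one works over an a-model, where stationarity of both $p$ and $q$ over $A$ is automatic and any two realisations with the same type over $A$ can be swapped by an $A$-automorphism; no genuinely new ingredient beyond symmetry and transitivity of $\find{}$ is required.
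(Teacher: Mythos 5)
The paper gives no proof here; it simply cites the result to Buechler, so there is nothing internal to compare against. That said, your argument reproduces what is essentially the standard proof: reduce the statement about $\domeq$ to the classical relation $\cldomeq$ via Proposition~\ref{pr:sameasclassic}, then show $p\cldoms q\Rightarrow w(p)\ge w(q)$ by transporting a weight-witnessing independent family from $q$ to $p$, and conclude by symmetry. The reduction is valid, and the transport idea is the right one.

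There is, however, a concrete slip at precisely the step you flag as the only nontrivial one. To force $ab\find{A}B$ you propose replacing $a$ by a realisation of the non-forking extension of $\tp(a/A)$ to $Bb$. That produces $a'$ with $a'\find{A}b$; but taking $B=A$ and $d=b$ in Definition~\ref{defin:cldom} shows that $a\cldoms_A b$ forces $a\fdep{A}b$ whenever $b\notin\acl(A)$, so $\tp(a'b/A)\ne\tp(ab/A)$, and there is no reason for $a'\cldoms_A b$ to survive — the very domination you want to exploit is destroyed. The fix is to take a realisation $a'$ of the non-forking extension of $\tp(a/Ab)$ to $Bb$: then $\tp(a'b/A)=\tp(ab/A)$, so $a'\cldoms_A b$ still holds, and $a'\find{Ab}B$ together with $b\find{A}B$ yields $a'b\find{A}B$ by transitivity. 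Equivalently, leave $a,b$ fixed and move $B$ and $(c_i)_{i<\kappa}$ by an $Ab$-automorphism to arrange $a\find{Ab}B'$. Either variant makes the remainder of your argument go through as written.
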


\begin{lemma}\label{lemma:copyofN}
  If $p$ has weight $w(p)=1$, then the monoid generated by $\class p$ in $\invtilde$ is isomorphic to $\mathbb N$.
\end{lemma}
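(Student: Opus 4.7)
The plan is to exhibit an isomorphism $f \colon \mathbb N \to \langle \class p \rangle \subseteq \invtilde$ defined by $f(n) \ceq \class{\pow p n}$, with the convention $f(0) \ceq \class 0$. Note first that, since $T$ is stable, Proposition~\ref{pr:stabstdom} ensures that $\otimes$ respects $\doms$, so $\invtilde$ is indeed a well-defined partially ordered monoid and it makes sense to talk about the submonoid generated by $\class p$. By associativity of $\otimes$ and the definition of $\pow p n$, we have $\pow p{n+m} = \pow p n(x^{n+m-1},\ldots,x^m)\otimes\pow p m(x^{m-1},\ldots,x^0)$ up to a harmless renaming of variables, so $f$ is a well-defined monoid homomorphism whose image is exactly the submonoid generated by $\class p$.

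The only thing to check is injectivity, i.e.\ that $\pow p n \ndomeq \pow p m$ whenever $n \ne m$. The plan is to compute the weight of $\pow p n$. In a stable theory, if $a \models \pow p n$ is obtained as a Morley sequence $(a^{n-1},\ldots,a^0)$ of $p$ over some $A$ over which $p$ is based and stationary, then the $a^i$ are pairwise independent over $A$ and each $\tp(a^i/\monster)$ has weight $1$ by hypothesis; the standard additivity of weight for nonforking amalgams (see e.g.~\cite[Corollary~1.4.4.4]{gstheory}) then yields $w(\pow p n) = n \cdot w(p) = n$. Combining this with Fact~\ref{fact:weightdom}, which says that weight is an invariant of the $\domeq$-class, we conclude that $\pow p n \domeq \pow p m$ forces $n = m$, as required. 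Hence $f$ is an injective homomorphism onto $\langle \class p \rangle$, and the latter is therefore isomorphic to $(\mathbb N, +)$.

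The main technical point — and the only place where the weight-$1$ assumption is essential — is the additivity of weight along $\otimes$, which is what converts the single piece of data $w(p)=1$ into the infinite tower of non-equivalences needed to separate the $\pow p n$. Everything else is essentially bookkeeping: the homomorphism property comes from associativity, and well-definedness of the target monoid from stability via Proposition~\ref{pr:stabstdom}.
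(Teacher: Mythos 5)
Your proof is correct and follows essentially the same route as the paper's: define $n \mapsto \class{\pow p n}$, use additivity of weight over $\otimes$ to get $w(\pow p n)=n$, and invoke Fact~\ref{fact:weightdom} to conclude injectivity. The only cosmetic difference is the reference cited for weight additivity (Pillay rather than Buechler), and a small terminological slip — the tuples in a Morley sequence are fully independent, not merely pairwise independent, which is what the weight-additivity result actually requires — but this does not affect the argument.
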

\begin{proof}
Since weight is additive over $\otimes$ (\!\!\cite[Proposition~5.6.5 (ii)]{buechler}) we have $w(p^{(n)})=n$ and we conclude by Fact~\ref{fact:weightdom} that the map $n\mapsto \class{\pow pn}$ is an isomorphism between  $\mathbb N$ and  the monoid generated by $\class p$.
\end{proof}
\begin{thm}\label{thm:bigoplusN}
  If $T$ is thin, then there are a cardinal $\kappa$, possibly depending on $\monster$, and an isomorphism  $f\from\invtilde\to \bigoplus_\kappa \mb N$.  Moreover, $p\perp q$ if and only if  $f(p)$ and $f(q)$ have disjoint supports.
\end{thm}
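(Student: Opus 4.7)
The plan is to exploit finiteness of weight (available by thinness) to decompose every global type into weight-one factors, and to recognise $\invtilde$ as a free commutative monoid generated by the orthogonality classes of non-realised weight-one types. Stability ensures via Proposition~\ref{pr:stabstdom} that $\otimes$ respects $\doms$ and that $\invtilde$ is commutative, so the target $\bigoplus_\kappa \mb N$ is at least plausible.

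Concretely, I take $\kappa$ to be the cardinality of the set of $\perp$-classes of non-realised weight-one global types, pick a representative $p_i$ from each class, and note that by Fact~\ref{fact:w1pod} the $p_i$ are pairwise orthogonal and pairwise non-domination-equivalent, while by Lemma~\ref{lemma:copyofN} each $\class{p_i}$ generates a copy of $\mb N$ in $\invtilde$. I then prove by induction on $w(p)$ (finite by thinness) that every $p \in \invtypes(\monster)$ is domination-equivalent to some $\bigotimes_j \pow{p_{i_j}}{n_j}$ with distinct $i_j$: the base case $w(p)=0$ gives $\class p=\class 0$, and in the inductive step I pick any non-realised weight-one $q \not\perp p$, identify the unique $i$ with $p_i \domeq q$ via Fact~\ref{fact:w1pod}, and invoke the standard weight-peeling result in thin theories (see \cite[Proposition~4.3.10]{gstheory}) to extract a factor and produce $p'$ with $p \domeq p_i \otimes p'$ and $w(p') = w(p) - 1$, to which I apply the induction hypothesis.

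This gives a natural candidate $f \from \invtilde \to \bigoplus_\kappa \mb N$ sending $\class p$ to the finitely supported tuple of exponents appearing in its weight-one decomposition. The main obstacle is well-definedness, i.e.\ uniqueness of the decomposition. For this I appeal to preservation of $\wort$ under $\otimes$ of invariant types (an easy check directly from the definition of $\otimes$) together with Proposition~\ref{pr:wortpreserved}: if two decompositions of the same class disagreed on some index $i$ (say $i$ appears on the left with $n_j \ge 1$ copies but not on the right), then $p_i$ would be weakly orthogonal to every factor on the right, hence to the right-hand product, hence by Proposition~\ref{pr:wortpreserved} to the left-hand product, giving $p_i \wort \pow{p_i}{n_j}$ and contradicting Corollary~\ref{co:daptr} since $p_i$ is non-realised; equality of the individual exponents follows by peeling off shared factors and invoking additivity of weight and Fact~\ref{fact:weightdom}. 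Once well-definedness is established, $f$ is a monoid homomorphism because the decomposition of $p \otimes q$ is the concatenation of the decompositions of $p$ and $q$, surjectivity is built in by definition of the $p_i$, and injectivity is again uniqueness. The ``moreover'' clause reduces via Fact~\ref{fact:w1pod} and the same $\otimes$-closure of $\wort$ to the observation that $p \perp q$ holds iff every weight-one factor of $p$ is orthogonal to every weight-one factor of $q$, which by construction is exactly disjointness of supports of $f(\class p)$ and $f(\class q)$.
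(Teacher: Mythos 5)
Your argument follows essentially the same route as the paper: enumerate the classes of weight-one types, use the Pillay result~\cite[Proposition~4.3.10]{gstheory} to decompose every type into a finite product of weight-one factors, set up $f$ via the exponent tuple, and establish well-definedness/injectivity via Fact~\ref{fact:w1pod} together with Corollary~\ref{co:daptr}, with Lemma~\ref{lemma:copyofN} providing the $\mathbb N$-structure and the ``moreover'' clause following from the closure of $\perp$ under products. Your write-up fills in some details the paper leaves to the reader, notably the induction on weight and the explicit contradiction via Proposition~\ref{pr:wortpreserved}, and that is fine.

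One point deserves a correction: you describe the implication ``$p\wort q_0$ and $p\wort q_1$ imply $p\wort q_0\otimes q_1$'' as ``an easy check directly from the definition of $\otimes$''. It is not: knowing $\tp(a,b/\monster)$, $\tp(a,c/\monster)$ and $\tp(b,c/\monster)$ does not in general pin down $\tp(a,b,c/\monster)$, so the product-orthogonality statement genuinely uses stability (forking symmetry / the Morley product being determined by its $2$-types over the base in the relevant sense). The paper accordingly cites~\cite[Proposition~C.5(i)]{makkai} for it. Since $T$ is thin, hence stable, the fact is available and your argument goes through, but you should invoke the stable-theoretic result rather than present it as immediate from the definitions; as stated, a reader might believe the claim holds in arbitrary theories, which it does not.
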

\begin{proof}
  Let $\seq{\class{p_i}\mid i<\kappa}$ be an enumeration without repetitions of the $\domeq$-classes of types of weight $1$.  For such classes, define $f(\class{p_i})$ to be the characteristic function of $\set i$, then extend $f$ to classes of products of weight-one types by sending $\class{p\otimes q}$ to $f(\class p)+f(\class q)$ and $\class 0$ to the function which is constantly $0$. It is easy to show using Fact~\ref{fact:w1pod} and Corollary~\ref{co:daptr} that $f$ is well-defined, i.e.\ does not depend on the decomposition as product of weight-one types, and that $f$ is injective.
  By~\cite[Proposition~4.3.10]{gstheory} in a thin theory every type  is domination-equivalent to a finite product  of weight-one types, so $f$ is defined on the whole of $\invtilde$.   By Lemma~\ref{lemma:copyofN} if $w(p)=1$ then the monoid generated by $\class p$ is isomorphic to $\mathbb N$ and this easily entails that $f$ is surjective.  It is also clear that $f$ is an isomorphism of ordered monoids. Since  two types of weight $1$ are either weakly orthogonal or domination-equivalent by Fact~\ref{fact:w1pod} and, by~\cite[Proposition~C.5(i)]{makkai}, in stable theories $p\perp q_0\otimes q_1$ if and only if $p\perp q_0$ and $p\perp q_1$, the last statement follows.
\end{proof}

\begin{rem}\label{rem:readoffweight}
Weight, which is preserved by domination-equivalence (Fact~\ref{fact:weightdom}), can, in the thin case,  be read off $f(\invtilde)$ by taking ``norms''. Specifically, if $f(\class p)=(n_i)_{i<\kappa}$, then $w(p)=\sum_{i<\kappa} n_i$  (recall that every  $(n_i)_{i<\kappa}\in \bigoplus_\kappa \mathbb N$ has finite support).  
\end{rem}

\begin{pr}
  If $T$ is thin, then $\equidom$ and $\domeq$ coincide.
\end{pr}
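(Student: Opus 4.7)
The plan is to show that in a thin theory every $\domeq$-class is forced to coincide with the $\equidom$-class of the same type, by decomposing both members of a given domination-equivalence into weight-one factors whose behaviour under the two relations is identical by Fact~\ref{fact:w1pod}.

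The key input I would invoke is the classical structure result in the form actually proved in \cite[Proposition~4.3.10]{gstheory}: given $p \in \invtypes(\monster)$, a realisation $a \models p$, and a small $M$ over which $p$ does not fork, one produces a maximal family of pairwise $M$-independent weight-one elements $b_1,\ldots,b_n$ with $b_i \fdep{M} a$, such that $\tp(a/M)$ and $\tp(b_1,\ldots,b_n/M)$ are $\doteq$-equivalent (the same small type witnesses both dominations). Via Proposition~\ref{pr:sameasclassic} this translates to $p \equidom p_1 \otimes \cdots \otimes p_n$, where $p_i \ceq \tp(b_i/\monster)$ has weight $1$; the ordering of the factors is immaterial since $\invbar$ is commutative by Proposition~\ref{pr:stabstdom}.

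Given this, suppose $p \domeq q$ and fix decompositions $p \equidom p_1 \otimes \cdots \otimes p_n$ and $q \equidom q_1 \otimes \cdots \otimes q_m$ into weight-one factors. Since $\equidom$ refines $\domeq$, the isomorphism $f\colon \invtilde \to \bigoplus_\kappa \mb N$ of Theorem~\ref{thm:bigoplusN} gives
\[
\sum_{i=1}^{n} f(\class{p_i}) = f(\class{p}) = f(\class{q}) = \sum_{j=1}^{m} f(\class{q_j}),
\]
and since each summand is the characteristic function of a single weight-one $\domeq$-class, the two multisets of such classes must coincide. In particular $n=m$ and, after a suitable permutation $\sigma$, we have $p_i \domeq q_{\sigma(i)}$ for each $i$. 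Fact~\ref{fact:w1pod} upgrades each of these to $p_i \equidom q_{\sigma(i)}$, and since $\otimes$ respects $\equidom$ and $\invbar$ is commutative (Proposition~\ref{pr:stabstdom} together with Proposition~\ref{pr:obvwks}), I may chain
\[
p \equidom p_1 \otimes \cdots \otimes p_n \equidom q_{\sigma(1)} \otimes \cdots \otimes q_{\sigma(n)} \equidom q_1 \otimes \cdots \otimes q_n \equidom q.
\]

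The only delicate point will be the second paragraph, namely extracting the decomposition in its $\equidom$ form rather than only up to $\domeq$: Example~\ref{eg:deq}(4) shows that the two relations genuinely differ in some stable theories, so the $\domeq$-decomposition already used in the proof of Theorem~\ref{thm:bigoplusN} does not by itself suffice. What rescues the argument is that the Pillay-style construction produces a single tuple witnessing both dominations via the same small type, hence an $\equidom$-witness directly; once that is in hand, the remainder is a routine matching of multisets combined with the stable-theory properties of $\equidom$ already established.
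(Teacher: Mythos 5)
Your proof is correct and follows essentially the same route as the paper's: decompose into weight-one factors up to $\equidom$, match the factors, and upgrade via Fact~\ref{fact:w1pod}. The one place you diverge is in how you source the $\equidom$-decomposition: you argue that the construction in \cite[Proposition~4.3.10]{gstheory}, being carried out via a single tuple dominating and dominated by $a$ over $M$, already yields a $\doteq$-witness; the paper sidesteps this reading of Pillay by citing \cite[Theorem~4.4.10]{kim}, which states the equidominance version directly. You correctly identified this as the delicate point, and your route through Pillay is defensible, but if you want the cleanest citation it is Kim's theorem you should invoke. Everything else---the multiset matching via $f\from\invtilde\to\bigoplus_\kappa\mb N$ and the final chain of equidominances using stability and commutativity of $\invbar$---is a slightly more explicit version of what the paper's terse ``the conclusion then follows'' is gesturing at.
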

\begin{proof}
By~\cite[Theorem~4.4.10]{kim} every type is in fact \emph{equidominant} with a finite product of types of weight $1$. The conclusion then follows from Fact~\ref{fact:w1pod} and the fact that, as $T$ is stable, $\otimes$ respects both $\domeq$ and $\equidom$.
\end{proof}

\subsection{Dimensionality and Dependence on the Monster}
At least in the thin case 
some classical results imply that independence of $\invtilde$ from the choice of $\monster$ is equivalent to \emph{dimensionality} of $T$, also called \emph{non-multidimensionality}. 
\begin{defin}
Let $T$ be stable.  We say that $T$ is \emph{dimensional} iff for every nonrealised global type $p$ there is a global type $q$ that does not fork over $\emptyset$ and such that $p\centernot\perp q$. We say that $T$ is \emph{bounded} iff $\abs{\invtilde}<\abs{\monster}$.
\end{defin}

If $T$ is thin, then $T$ is dimensional if and only if it is bounded, as follows e.g.~from Theorem~\ref{thm:bigoplusN} (alternatively, see the proof of~\cite[Lemma~7.1.2]{buechler}, but replace ``superstable'' with ``thin'' and ``regular types'' with ``weight-one types''). In this case  the number of copies of $\mathbb N$ required is  bounded by $2^{\abs T}$, and by $\abs T$ if $T$ is totally transcendental, see e.g.~\cite[Corollary~7.1.1]{buechler}. In fact, some sources define boundedness only for superstable theories, essentially as boundedness of the number of copies of $\mathbb N$ given by Theorem~\ref{thm:bigoplusN}.

\begin{conjecture}\label{conjecture:nmd}
Let $T$ be stable.  The following are equivalent: 
  \begin{enumerate*}
  \item \label{point:bounded} $T$ is bounded.
  \item $T$ is dimensional.
  \item \label{point:surj}$\mathfrak e$ is surjective.
  \end{enumerate*}
\end{conjecture}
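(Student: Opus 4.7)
The plan is to close the cycle $2\Rightarrow\ref{point:surj}\Rightarrow\ref{point:bounded}\Rightarrow 2$, taking for granted the remark immediately following the definition of boundedness that $\ref{point:bounded}\Leftrightarrow 2$ is already known in the thin case from Theorem~\ref{thm:bigoplusN}. The two substantive ingredients are the injectivity of $\mf e$ in the stable setting (Corollary~\ref{co:stableinjective}) and the weight-one structure theory provided by Theorem~\ref{thm:bigoplusN} together with Fact~\ref{fact:w1pod}. I would first dispatch the implication that holds without any extra hypotheses, then handle the two thin-case arguments, and finally comment on what obstructs the general stable case.

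For $\ref{point:surj}\Rightarrow\ref{point:bounded}$, fix a ``smallest'' monster $\monster_0$, for instance any $\abs T^+$-saturated and $\abs T^+$-strongly homogeneous model, of cardinality at most $2^{\abs T}$, and let $\monster_1\bigsupset\monster_0$ be arbitrary. By Corollary~\ref{co:stableinjective} the map $\mf e\from\invtilde(\monster_0)\to\invtilde(\monster_1)$ is injective, and by hypothesis it is also surjective, hence a bijection. Therefore $\abs{\invtilde(\monster_1)}=\abs{\invtilde(\monster_0)}\le 2^{\abs T}$, a bound independent of $\monster_1$, which is precisely boundedness.

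For $2\Rightarrow\ref{point:surj}$, work in the thin case. Given $p\in\invtypes(\monster_1)$, apply~\cite[Proposition~4.3.10]{gstheory} and Theorem~\ref{thm:bigoplusN} to decompose $\class p=\class{p_1}\otimes\cdots\otimes\class{p_n}$ with each $p_i$ of weight~$1$. By dimensionality, each $p_i$ is non-orthogonal to some global type $q_i$ not forking over $\emptyset$; using the thin decomposition of $q_i$ together with~\cite[Proposition~C.5(i)]{makkai}, which says that orthogonality distributes over $\otimes$ in the stable setting, one may further assume $w(q_i)=1$. Fact~\ref{fact:w1pod} then promotes $p_i\nwort q_i$ to $p_i\domeq q_i$. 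Because $q_i$ does not fork over $\emptyset$, it is the canonical (that is, $\acl^\eq(\emptyset)$-invariant in $T^\eq$) nonforking extension of its restriction to $\monster_0$, so $q_i=(q_i\restr\monster_0)\invext\monster_1$; hence $\class{q_i}=\mf e(\class{q_i\restr\monster_0})\in\operatorname{im}\mf e$, and since the image is a submonoid, $\class p\in\operatorname{im}\mf e$ as well. For $\ref{point:bounded}\Rightarrow 2$, argue by contrapositive: a failure of dimensionality provides a weight-one type $p$ orthogonal to every type not forking over $\emptyset$, and a standard stability-theoretic argument analogous to the one behind~\cite[Lemma~7.1.2]{buechler} produces unboundedly many pairwise orthogonal translates of $p$, which by Fact~\ref{fact:w1pod} are pairwise non-$\domeq$-equivalent, contradicting boundedness.

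The principal obstacle is lifting $2\Rightarrow\ref{point:surj}$ and $\ref{point:bounded}\Rightarrow 2$ out of the thin case. Both arguments above rely essentially on weight-one pieces: the first on the promotion of $\nwort$ to $\domeq$ afforded by Fact~\ref{fact:w1pod}, and the second on the production of unboundedly many non-equivalent types uniformly from a single weight-one witness. In a non-thin stable theory, types may have infinite weight and non-orthogonality is strictly weaker than $\domeq$, so the finite decomposition underlying the above proofs is unavailable; a full proof would likely require either a substitute for~\cite[Proposition~4.3.10]{gstheory} tailored to domination-equivalence, or a more direct coding of failures of dimensionality into failures of boundedness and of surjectivity. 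I expect this to be where the real difficulty lies, consistently with the statement being offered as a conjecture.
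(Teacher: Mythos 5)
Your reading of the statement as a conjecture whose proof is only available in the thin case matches the paper's intent, and your overall plan is sound. Your treatment of $\ref{point:surj}\Rightarrow\ref{point:bounded}$ via Corollary~\ref{co:stableinjective} and a cardinality bound over a small monster is a reasonable way to unpack what the paper calls ``trivial''. For $\ref{point:bounded}\Rightarrow 2$ the paper simply cites~\cite[Proposition~5.6.2]{buechler}, which is valid for arbitrary stable $T$, so your thin-only argument for that implication is superfluous, though not incorrect.

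The substantive divergence is in $2\Rightarrow\ref{point:surj}$, and there is a genuine gap in your version. The paper argues by contrapositive through the direct-sum structure of Theorem~\ref{thm:bigoplusN}: if $\mathfrak e$ is not surjective, a weight-one $\class p\notin\im\mathfrak e$ exists, and the ``disjoint supports'' clause forces $p$ to be orthogonal to every type that does not fork over $\monster_0$, hence $T$ is not dimensional. You argue directly, and the step ``one may further assume $w(q_i)=1$'' is where the argument breaks: the weight-one pieces of $q_i$ coming from~\cite[Proposition~4.3.10]{gstheory} are produced over some model, and nothing guarantees they do not fork over $\emptyset$; yet the next sentence uses precisely non-forking of $q_i$ over $\emptyset$ to write $q_i=(q_i\restr\monster_0)\invext\monster_1$ and place $\class{q_i}$ in $\im\mathfrak e$. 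What actually rescues your line of reasoning, in the thin case, is that $\im\mathfrak e$ corresponds to a direct summand of $\bigoplus_{\kappa_1}\mathbb N$ and is therefore downward closed under $\domd$; since the weight-one piece of the \emph{original} $q_i$ is dominated by $q_i$, its class still lands in $\im\mathfrak e$. You need to say this explicitly. Note that downward closedness of $\im\mathfrak e$ is exactly Question~\ref{question:downwardclosed}, which the paper leaves open in general; this is why the paper's proof goes through the $\bigoplus\mathbb N$ isomorphism rather than chasing witnesses as you do. Your closing assessment of what obstructs the general stable case is accurate and in the spirit of the paper's own remarks.
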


$1\allora 2$  follows from~\cite[Proposition~5.6.2]{buechler} and $3\allora 1$ is trivial, so it remains to prove $2\allora 3$, namely that if there is a type over $\monster_1$ not domination-equivalent to any type that does not fork over $\monster_0$, then there is a type orthogonal to every type that does not fork over $\emptyset$.

\begin{pr}
  If $T$ is thin  then Conjecture~\ref{conjecture:nmd} holds.
\end{pr}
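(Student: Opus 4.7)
The plan is to prove the only missing implication, $2\allora 3$. The strategy is to reduce to showing that every weight-one global type over $\monster_1$ has its $\domeq$-class in the image of $\mf e$, and then to use dimensionality to realise such a class by a type that manifestly does not fork over $\emptyset$. Since $T$ is stable, Proposition~\ref{pr:stabstdom} together with Proposition~\ref{pr:obvwks} ensures that $\otimes$ respects $\doms$, so $\mf e$ is a homomorphism of monoids and $\im\mf e$ is a submonoid of $\invtilde(\monster_1)$. By Theorem~\ref{thm:bigoplusN} the classes of weight-one types generate $\invtilde(\monster_1)$, so it suffices to deal with nonrealised weight-one $p$.

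Fix such a $p$. By dimensionality there is $q\in\invtypes(\monster_1)$ with $p\nwort q$ and $q$ not forking over $\emptyset$. Passing to $T^\eq$ (harmless by Remark~\ref{rem:tteqstab}) and working relative to $A\coloneqq\acl^{\eq}(\emptyset)$, over which every type is stationary, I would fix $b\models q$, so that $b\find A\monster_1$ and $q$ is the unique nonforking extension of $\tp(b/A)$ to $\monster_1$. The key technical move is the weight-one decomposition from~\cite[Proposition~4.3.10]{gstheory} applied over $A$: it should produce an $A$-independent tuple $a_1,\ldots,a_n\in\acl^{\eq}(Ab)$, each $\tp(a_i/A)$ of weight one, jointly dominating $b$ over $A$. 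Setting $\tilde q_i\coloneqq\tp(a_i/\monster_1)$, each $\tilde q_i$ has weight one; and since $a_i\in\acl^{\eq}(Ab)$ and $b\find A\monster_1$, algebraicity of forking gives $a_i\find A\monster_1$, so $\tilde q_i$ does not fork over $\emptyset$.

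To close the loop, I would use the distributivity of weak orthogonality over $\otimes$ in the stable case (cf.~\cite[Proposition~C.5(i)]{makkai}, already invoked in the proof of Theorem~\ref{thm:bigoplusN}): from $p\nwort q\equidom\bigotimes_{i<n}\tilde q_i$ it follows that $p\nwort\tilde q_i$ for some $i$. Since $p$ and $\tilde q_i$ are both of weight one and nonorthogonal, Fact~\ref{fact:w1pod} forces $p\domeq\tilde q_i$. As $\tilde q_i$ does not fork over $\emptyset$, it is the unique invariant extension to $\monster_1$ of its restriction to $\monster_0$, and therefore $\class p=\class{\tilde q_i}=\mf e(\class{\tilde q_i\restr\monster_0})\in\im\mf e$. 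The step I expect to be the main obstacle is the extraction of the weight-one factors inside $\acl^{\eq}(Ab)$, for it is precisely the point where thinness is used in an essential way: it guarantees that the weight-one types dominating $b$ can be chosen to be algebraic over $b$, and hence to inherit the nonforking-over-$\emptyset$ property of $q$, without which the final identification with an element of $\im\mf e$ would fail.
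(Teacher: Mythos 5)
You prove the nontrivial implication $2\Rightarrow 3$ directly, whereas the paper argues by contrapositive. The paper's argument is abstract: since $\mathfrak e$ preserves weight (nonforking extensions preserve weight) and is injective (Corollary~\ref{co:stableinjective}), its image in the presentation $\bigoplus_{\kappa_1}\mathbb N$ of Theorem~\ref{thm:bigoplusN} is a direct summand; a missing generator then corresponds to a weight-one type that, by the ``Moreover'' clause of the theorem, is orthogonal to everything in $\im\mathfrak e$, hence to every type not forking over $\emptyset$, contradicting dimensionality.

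Your route is concrete and bottom-up: reduce to weight-one classes, use dimensionality to produce $q$ with $p\nwort q$ and $q\find\emptyset$, decompose $q$ over $A=\acl^\eq(\emptyset)$ into weight-one pieces $\tilde q_i$ not forking over $\emptyset$, then combine distributivity of $\wort$ over $\otimes$ with Fact~\ref{fact:w1pod} to land $\class p=\class{\tilde q_i}$ in $\im\mathfrak e$. This is correct, and buys a more hands-on picture of \emph{which} type over $\monster_0$ realises $\class p$, at the cost of invoking the decomposition theorem in a slightly stronger form than the one the paper quotes. The step you flag --- that the $a_i$ land in $\acl^\eq(Ab)$ --- is indeed the usual shape of the (semi-)regular decomposition in the thin case, but it is not strictly needed: the domination $b\cldoms_A a_1\cdots a_n$ already gives $a_1\cdots a_n\find_A\monster_1$ from $b\find_A\monster_1$ by weak domination, and by replacing the $a_i$ with a realisation of the nonforking extension of $\tp(a_1\cdots a_n/Ab)$ to $\monster_1$ one secures $ba_1\cdots a_n\find_A\monster_1$, which is precisely what the Fact in Section~\ref{section:stabletheories} requires to lift the decomposition from $A$ to $\monster_1$. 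With that small repair the argument is complete; the paper's route is shorter because it never has to touch realisations at all.
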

\begin{proof}
  Suppose $\monster_0\smallprec \monster_1$ and let $f_j\from  \operatorname{\widetilde{Inv}}(\monster_j)\to \bigoplus_{\kappa_j} \mathbb N$, for $j\in \set{0,1}$, be given by Theorem~\ref{thm:bigoplusN}. Let \[g\coloneqq f_1\circ \mathfrak e \circ f_0\inverse\from\bigoplus_{\kappa_0} \mathbb N\to \bigoplus_{\kappa_1} \mathbb N\]
Since weight is preserved by nonforking extensions (e.g.\ by~\cite[Definition~5.6.6~(iii)]{buechler}),  $\mathfrak e$  sends types of weight $1$ to types of weight $1$. Therefore  by Remark~\ref{rem:readoffweight}  we may decompose the codomain of $g$ as \[\bigoplus_{\kappa_1} \mathbb N\cong \bigoplus_{i<\kappa_0}\mathbb N\oplus \bigoplus_{\kappa_0\le i<\kappa_1}\mathbb N\] where the direct summand $\bigoplus_{i<\kappa_0}\mathbb N$ may be assumed to coincide with $\im g$. It then follows that if $\mathfrak e$ is not surjective then we can find $\class p\notin \im \mathfrak e$   such that $p$ has  weight $1$. Again by Theorem~\ref{thm:bigoplusN}, such a $p$ needs to be orthogonal to every type in the union of $\im \mathfrak  e$, which is the set of  types that do not fork over $\monster_0$. In particular, $p$ is orthogonal to every type that does not fork over $\emptyset$.
\end{proof}

  A possible attack in the general case could be, assuming $\mathfrak e$ is not surjective, to try to find a type of weight $1$ outside of its  image. This will be either orthogonal to every type that does not fork over $\monster$, or dominated by one of them by~\cite[Corollary~5.6.5]{buechler}. If we knew a positive answer to Question~\ref{question:downwardclosed} at least in the stable case, and if we managed to find a type as above, then  we would be done. 

  A possibly related notion is  the \emph{strong compulsion property} (see~\cite[Definition~2]{hyttinen}); it implies that every type over $\monster_1\satext \monster_0$ is either orthogonal to $\monster_0$ or dominates a type that does not fork over it.  Whether all countable stable $T^\eq$ have a weakening of this property is~\cite[Conjecture~18]{hyttinen}.

  We conclude  with two easy consequences of  some classical results.

\begin{defin}
  A stable theory $T$ is \emph{unidimensional} iff whenever $p\perp q$ at least one between $p$ and $q$ is algebraic.
\end{defin}
If $T$ is totally transcendental then unidimensionality is the same as categoricity in every cardinality strictly greater than $\abs T$ (see~\cite[Proposition~7.1.1]{buechler}). Unidimensional theories may still fail to be totally transcendental, e.g.\ $\Th(\mathbb Z,+)$ is such. Anyway, the following  classical theorem by Hrushovski~(see \cite[Theorem~4]{unidss}) tells us that the situation cannot be much worse than that. 
\begin{thm}[Hrushovski]
Every unidimensional theory is superstable.
\end{thm}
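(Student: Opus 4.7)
The plan is to prove the contrapositive: if $T$ is stable but not superstable, then $T$ is not unidimensional; explicitly, I will produce two non-algebraic global stationary types $r,r'$ with $r\perp r'$.

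First, I would exploit the standard characterisation of non-superstability. A stable theory fails to be superstable precisely when there is an infinite forking chain, i.e.\ a complete type $p$ and an increasing sequence of small parameter sets $A_0\subsetneq A_1\subsetneq\cdots$ with $p\restr A_{n+1}$ forking over $A_n$ for every $n<\omega$. By compactness and a Ramsey-style extraction one may replace the $A_n$ by the initial segments of an $\emptyset$-indiscernible sequence, and one then applies Shelah's theorem that every non-algebraic stationary type in a stable theory is non-orthogonal to a regular type. This yields a regular type $r$ whose non-orthogonality class encodes (the tail of) the chain.

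Second, and crucially, I would use the infinite forking chain to manufacture a second regular type $r'$ with $r\perp r'$. The heuristic is that in the superstable case, regular types have finite weight and the $U$-rank is ordinal-valued, so orthogonality between distinct regular types is controlled by ranks; in the non-superstable case no such finite-rank control is available, and one instead uses the abundance of forking to build two mutually independent copies of the chain over a common base and show that the resulting regular types cannot both be non-orthogonal without collapsing the two chains into a single one of finite length. Hrushovski's argument in~\cite{unidss} handles this by a delicate amalgamation together with the fact that, in a stable unidimensional theory, non-orthogonality of regular types is an equivalence relation with a single non-algebraic class; one then shows that the presence of a single such class is incompatible with the existence of an infinite forking chain.

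The main obstacle is precisely this second step: guaranteeing orthogonality without the scaffolding of finite weight that superstability would provide. Once the orthogonal pair $r,r'$ is located, each is non-algebraic (being regular, hence of weight $1$), and $r\perp r'$ directly contradicts the definition of unidimensionality. I would additionally take some care to ensure that the regular types produced in the first step are genuinely global and stationary — a routine matter of passing to a non-forking global extension over an $\abs{T}^+$-saturated model, which preserves regularity and all the orthogonality relations exploited in the argument.
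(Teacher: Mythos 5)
The paper does not prove this theorem; it is cited as a black box from Hrushovski's paper~\cite{unidss}, so there is no ``paper's own proof'' to compare against. Evaluating your sketch on its own terms, it does not constitute a proof, and I would flag two substantive problems.

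First, the appeal to ``Shelah's theorem that every non-algebraic stationary type in a stable theory is non-orthogonal to a regular type'' misstates the theorem. That result holds for \emph{superstable} theories (regular types are found by descending $U$-rank, which requires $U$-rank to be ordinal-valued); in a strictly stable theory a nonalgebraic type need not be non-orthogonal to any regular type at all. Since your whole argument is by contrapositive from non-superstability, you are invoking a tool precisely in the regime where it is unavailable. The subsequent manoeuvres (regular types have weight $1$, orthogonality controlled by ranks, etc.) inherit this problem: you are importing superstable scaffolding into a setting whose defining feature is its absence.

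Second, and as you yourself acknowledge, the step that is supposed to produce the orthogonal pair $r\perp r'$ from the forking chain is not an argument but a description of what an argument would need to accomplish (``build two mutually independent copies of the chain\dots and show the resulting regular types cannot both be non-orthogonal\dots''). There is no construction here and no reason offered why the two types obtained would be orthogonal rather than domination-equivalent. This is not a routine gap: Hrushovski's actual proof does \emph{not} proceed by directly extracting an orthogonal pair from a forking chain. It is a contradiction argument that assumes unidimensionality together with an infinite forking chain and then runs a structural analysis — definable groups via the group configuration theorem, local modularity, internality — to reach a contradiction. The hard content of the theorem lives in that structure theory, not in a forking-chain manipulation, and your sketch does not engage with it. As written, the proposal reproduces the statement of the difficulty rather than resolving it.
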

\begin{co}\label{co:unidim}
A stable  $T$ is unidimensional if and only if $\invtilde\cong\mb N$.
\end{co}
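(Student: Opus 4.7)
The plan is to combine the two results cited in the paragraph preceding the corollary: Hrushovski's theorem, which provides superstability, and Theorem~\ref{thm:bigoplusN}, which describes $\invtilde$ in any thin theory.

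For the forward direction, assume $T$ is unidimensional. By Hrushovski's theorem $T$ is superstable, hence thin, so Theorem~\ref{thm:bigoplusN} yields an ordered-monoid isomorphism $f\from\invtilde\to\bigoplus_\kappa\mb N$ under which orthogonality of types corresponds to disjointness of supports. If $\kappa\ge 2$, pick $i\ne j<\kappa$; the preimages $p_i,p_j$ of the basis vectors $e_i,e_j$ are nonrealised weight-one types (nonrealised since $f(\class 0)=0$) and orthogonal by the ``moreover'' part of Theorem~\ref{thm:bigoplusN}, contradicting unidimensionality. Since $T$ is stable with infinite models, it admits at least one nonrealised global (invariant) type, so $\kappa\ge 1$. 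Therefore $\kappa=1$ and $\invtilde\cong\mb N$.

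For the converse, suppose $\invtilde\cong\mb N$, let $\class a$ correspond to $1\in\mb N$, and assume for contradiction that $T$ is not unidimensional: there exist nonrealised $p,q\in\invtypes(\monster)$ with $p\perp q$. Each has a class which is a positive multiple of $\class a$, say $p\domeq a^{(n)}$ and $q\domeq a^{(m)}$ with $n,m\ge 1$; in particular both $p\doms a^{(n)}\doms a$ and $q\doms a^{(m)}\doms a$ hold, since $n\cdot 1\ge 1$ and $m\cdot 1\ge 1$ in $\mb N$ and the isomorphism is order-preserving. Because $T$ is stable, $\perp$ and $\wort$ agree on global types, so Proposition~\ref{pr:wortpreserved} is applicable. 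Apply it twice along the chain $p\doms a^{(n)}\doms a$ to transfer $p\wort q$ down to $a\wort q$; using symmetry of $\wort$, apply it twice more along $q\doms a^{(m)}\doms a$ to conclude $a\wort a$. By Corollary~\ref{co:daptr} this forces $a$ to be realised, contradicting $\class a\ne\class 0$.

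There is no real obstacle beyond assembling the right pieces. The only points worth flagging are the appeal to Hrushovski's theorem in the forward direction (without which Theorem~\ref{thm:bigoplusN} is not available) and, in the converse, the use of the coincidence of $\perp$ and $\wort$ on global types in stable theories, which is what allows Proposition~\ref{pr:wortpreserved} to descend orthogonality from a power $a^{(n)}$ down to the generator $a$ and eventually produce the self-orthogonality $a\wort a$.
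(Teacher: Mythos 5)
Your proof is correct, and the forward direction is exactly the paper's argument: Hrushovski's theorem gives superstability (hence thinness), Theorem~\ref{thm:bigoplusN} decomposes $\invtilde\cong\bigoplus_\kappa\mb N$, and unidimensionality forces $\kappa=1$ because distinct direct summands would yield nonrealised orthogonal weight-one types. The converse is also correct, but you take a longer path than the paper. The paper simply observes that $\invtilde\cong\mb N$ means any two classes are $\doms$-comparable, so given nonrealised $p\perp q$ one may assume (say) $p\doms q$ and immediately apply Corollary~\ref{co:daptr} to conclude $q$ is realised, a contradiction. You instead decompose $p$ and $q$ as domination-equivalent to powers of a generator $a$, then push the orthogonality down to $a\wort a$ via repeated applications of Proposition~\ref{pr:wortpreserved} before invoking Corollary~\ref{co:daptr}. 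Both are valid; your version makes the role of Proposition~\ref{pr:wortpreserved} explicit (it is hidden inside Corollary~\ref{co:daptr} in the paper's route), but the paper's one-comparison-and-done argument is shorter since it never needs the isomorphism with $\mb N$ beyond knowing the order is total.
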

\begin{proof}
If $T$ is unidimensional, by Hrushovski's result we have the hypothesis of Theorem~\ref{thm:bigoplusN}, and the conclusion then follows easily from the definition of unidimensionality. In the other direction, the hypothesis yields that any two types are $\doms$-comparable, but if $p\wort q$ and $p\doms q$ then $q$ is realised by Corollary~\ref{co:daptr}.
\end{proof}

Compare the previous corollary with~\cite[Proposition~5]{firstdefinition}. Note that the hypothesis that $T$ is stable is necessary:  in the random graph if $p\wort q$ then one between $p$ and $q$ must be algebraic, but $\invtilde$ is not commutative by Corollary~\ref{co:rgncomm}.

  \begin{pr}\label{pr:ur1son}
  If $T$ is stable then $\mathbb N$ embeds in $\invtilde$. 
\end{pr}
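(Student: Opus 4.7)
The plan is to produce a type $p \in \invtypes(\monster)$ with weight $w(p)=1$ and then invoke Lemma~\ref{lemma:copyofN} directly. Once such a $p$ is in hand, the rest is essentially mechanical: by Proposition~\ref{pr:stabstdom}, $\otimes$ respects $\doms$, so $(\invtilde, \otimes)$ is a well-defined monoid; associativity of $\otimes$ yields $\pow p n \otimes \pow p m \equidom \pow p{n+m}$ (up to a harmless renaming of variables), so $n \mapsto \class{\pow p n}$ is a monoid homomorphism from $\mathbb N$; and additivity of weight over independent products gives $w(\pow p n) = n$. Since, by Fact~\ref{fact:weightdom}, weight is preserved under $\domeq$, distinct values of $n$ produce distinct classes, so the map is injective and witnesses the desired embedding.

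The only nontrivial step is therefore producing a weight-1 type in an arbitrary stable theory. We cannot import this from Theorem~\ref{thm:bigoplusN}, because that result assumes thinness in order to decompose \emph{every} type into weight-1 factors; here we only need a single such type. I would rely on the classical theorem that every stable theory admits regular types (over any sufficiently saturated model), and the fact that regular types have weight $1$. This existence statement is the main obstacle, but it belongs to the standard body of classical stability theory rather than to the machinery developed in this paper, so it can be quoted as a black box. With that ingredient, Lemma~\ref{lemma:copyofN} delivers the monoid embedding $\mathbb N \hookrightarrow \invtilde$, completing the proof.
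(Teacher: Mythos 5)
Your proposal is correct and follows essentially the same route as the paper: produce a single weight-one invariant type and then invoke Lemma~\ref{lemma:copyofN}. The only difference is cosmetic: the paper obtains the weight-one type as a type of U-rank~$1$, whose existence in an arbitrary stable theory it cites from Poizat, while you appeal to the existence of regular types; since a U-rank~$1$ type is automatically regular and this is in fact the standard way to produce a regular type outside the superstable setting, the two black boxes are the same fact, and the paper's phrasing just points at the cleaner classical reference.
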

\begin{proof}
  By~\cite[Lemma~13.3 and p.~336]{poizat} in any stable theory there is always a type $p$ of U-rank $1$, and in particular of weight  $w(p)=1$ (see~\cite[before Theorem~19.9]{poizat}). The conclusion follows from Lemma~\ref{lemma:copyofN}.  
\end{proof}

\end{document}